\newcommand{\bydef}{:=}
\newcommand{\defby}{=:}
\newcommand{\vphi}{\varphi}
\newcommand{\veps}{\varepsilon}
\newcommand{\ul}[1]{\underline{#1}}
\newcommand{\wh}[1]{\widehat{#1}}
\newcommand{\wt}[1]{\widetilde{#1}}
\newcommand{\wb}[1]{\overline{#1}}
\newcommand{\sym}{\mathcal{H}}%Hermitian (symmetric)
\newcommand{\id}{\mathrm{id}}%identity map
\DeclareMathOperator*{\ot}{\otimes}%allows placement of subscript below in displaymath
\DeclareMathOperator{\rank}{\mathrm{rank}} %rank 
\newcommand{\cA}{\mathcal{A}}%algebras
\newcommand{\cC}{\mathcal{C}}
\newcommand{\cD}{\mathcal{D}}
\newcommand{\cG}{\mathcal{G}}
\newcommand{\cJ}{\mathcal{J}}
\newcommand{\cK}{\mathcal{K}}
\newcommand{\cL}{\mathcal{L}}
\newcommand{\cO}{\mathcal{O}}
\newcommand{\cR}{\mathcal{R}}
\newcommand{\cS}{\mathcal{S}}
\newcommand{\cU}{\mathcal{U}}
\newcommand{\cV}{\mathcal{V}}
\newcommand{\ZZ}{\mathbb{Z}}
\newcommand{\FF}{\mathbb{F}}
\newcommand{\chr}[1]{\mathrm{char}\,#1}
\DeclareMathOperator{\Hom}{\mathrm{Hom}}
\DeclareMathOperator{\End}{\mathrm{End}}
\DeclareMathOperator{\Alg}{\mathrm{Alg}}
\DeclareMathOperator{\Aut}{\mathrm{Aut}}%automorphism group
\DeclareMathOperator{\inaut}{\mathrm{Int}}%inner automorphisms
\DeclareMathOperator{\AAut}{\mathbf{Aut}}%automorphism group scheme
\DeclareMathOperator{\Der}{\mathrm{Der}}
\DeclareMathOperator{\supp}{\mathrm{Supp}\,}
\newcommand{\Lie}{\mathrm{Lie}}%Lie functor
\newcommand{\Ad}{\mathrm{Ad}}
\newcommand{\So}{\mathfrak{so}}
\newcommand{\frso}{{\mathfrak{so}}}
\newcommand{\frgl}{{\mathfrak{gl}}}
\newcommand{\tri}{\mathfrak{tri}}
\newcommand{\GL}{\mathrm{GL}}
\newcommand{\Ort}{\mathrm{O}}
\newcommand{\PGO}{\mathrm{PGO}}
\newcommand{\Spin}{\mathrm{Spin}}
\newcommand{\TRI}{\mathrm{Tri}}
\newcommand{\Gs}{\mathbf{G}}
\newcommand{\GLs}{\mathbf{GL}}
\newcommand{\PGOs}{\mathbf{PGO}}
\newcommand{\Spins}{\mathbf{Spin}}
\newcommand{\TRIs}{\mathbf{Tri}}
\newcommand{\bmu}{\boldsymbol{\mu}}
\newcommand{\Qs}{\mathbf{Q}}%quasitorus
\newcommand{\alb}{\mathbb{A}} %Albert algebra
\newcommand{\LL}{\mathbb{L}}
\newcommand{\SIM}{\mathrm{Sim}}
\newcommand{\SIMs}{\mathbf{Sim}}
\newcommand{\Cl}{\mathfrak{Cl}}
\newcommand{\buno}{\mathbf{1}}
\newcommand{\GIII}{\Gamma^\mathrm{(III)}}
\newcommand{\Br}{\mathrm{Br}}
\newtheorem{theorem}{Theorem}
\newtheorem{proposition}[theorem]{Proposition}
\newtheorem{lemma}[theorem]{Lemma}
\newtheorem{corollary}[theorem]{Corollary}
\theoremstyle{definition}
\newtheorem{df}[theorem]{Definition}
\theoremstyle{remark}
\newtheorem{remark}[theorem]{Remark}
\begin{document}

\title{Gradings on the Lie algebra $D_4$ revisited}

\author[A. Elduque]{Alberto Elduque${}^\star$}
\address{Departamento de Matem\'{a}ticas
 e Instituto Universitario de Matem\'aticas y Aplicaciones,
 Universidad de Zaragoza, 50009 Zaragoza, Spain}
%\curraddr{}
\email{elduque@unizar.es}
\thanks{${}^\star$supported by the Spanish Ministerio de Econom\'{\i}a y Competitividad---Fondo Europeo de Desarrollo Regional (FEDER) MTM2010-18370-C04-02 and MTM2013-45588-C3-2-P, and by the Diputaci\'on General de Arag\'on---Fondo Social Europeo (Grupo de Investigaci\'on de \'Algebra)}

\author[M. Kochetov]{Mikhail Kochetov${}^\dagger$}
\address{Department of Mathematics and Statistics,
 Memorial University of Newfoundland,
 St. John's, NL, A1C5S7, Canada}
%\curraddr{}
\email{mikhail@mun.ca}
\thanks{${}^\dagger$supported by Discovery Grant 341792-2013 of the Natural Sciences and Engineering Research Council (NSERC) of Canada and a grant for visiting scientists by Instituto Universitario de Matem\'aticas y Aplicaciones, University of Zaragoza}

\subjclass[2010]{Primary 17B70; Secondary 17B25, 17C40, 17A75}

\keywords{Graded algebra, graded module, exceptional simple Lie algebra, composition algebra, 
cyclic composition algebra, triality, trialitarian algebra, exceptional simple Jordan algebra}

\date{}

\begin{abstract}
We classify group gradings on the simple Lie algebra $\cL$ of type $D_4$ over an algebraically closed field of characteristic different from $2$: 
fine gradings up to equivalence and $G$-gradings, with a fixed group $G$, up to isomorphism. For each $G$-grading on $\cL$, we also study graded $\cL$-modules 
(assuming characteristic $0$).
\end{abstract}

\maketitle

\section{Introduction}

In the past two decades, there has been much interest in gradings on simple Lie algebras by arbitrary groups --- see our recent monograph \cite{EKmon} and references therein. In particular, the classification of fine gradings (up to equivalence) on all finite-dimensional simple Lie algebras over an algebraically closed field of characteristic $0$ is essentially complete (\cite[Chapters 3--6]{EKmon}, \cite{E14}, \cite{YuExc}). For a given group $G$, the classification of $G$-gradings (up to isomorphism) on classical simple Lie algebras over an algebraically closed field of characteristic different from $2$ was done in \cite{BK10} (see also \cite[Chapter 3]{EKmon}), excluding type $D_4$, which exhibits exceptional behavior due to the phenomenon of triality. Although the case of $D_4$ is included in \cite{E09d} (see also \cite{DMV} and \cite[\S 6.1]{EKmon}), only fine gradings are treated there and the characteristic is assumed to be $0$. Since we do not see how to extend those arguments to positive characteristic, here we use an approach based on affine group schemes, which was also employed in \cite{BK10}. 

Let $\FF$ be the ground field. Except in the Preliminaries, we will assume $\FF$ {\em algebraically closed} and $\chr{\FF}\ne 2$. All vector spaces, algebras, tensor products, group schemes, etc. will be assumed over $\FF$ unless indicated otherwise. The superscript $\times$ will indicate the multiplicative group of invertible elements.

Recall that affine group schemes are representable functors from the category $\Alg_\FF$ of unital associative commutative algebras over $\FF$ to the category of groups --- we refer the reader to \cite{Wh}, \cite[Chapter VI]{KMRT} or \cite[Appendix A]{EKmon} for the background. Every (na\"\i ve) algebraic group gives rise to an affine group scheme. These are precisely the {\em smooth algebraic} group schemes, i.e., those whose representing (Hopf) algebra is finitely generated and reduced. In characteristic $0$, all group schemes are reduced, but it is not so in positive characteristic. We will follow the common convention of denoting the (smooth) group schemes corresponding to classical groups by the same letters, but using bold font to distinguish the scheme from the group (which is identified with the $\FF$-points of the scheme). It is important to note that this convention should be used with care: for example, the automorphism group scheme $\AAut_\FF(\cU)$ of a finite-dimensional algebra $\cU$ is defined by $\AAut_\FF(\cU)(\cS)=\Aut_\cS(\cU\ot\cS)$ for every $\cS$ in $\Alg_\FF$, and may be strictly larger than the smooth group scheme corresponding to the algebraic group $\Aut_\FF(\cU)$.

Let $\cL$ be a Lie algebra of type $D_4$. It is well known that the automorphism group scheme $\AAut_\FF(\cL)$ is smooth and we have a short exact sequence
\begin{equation}\label{eq:exact_D4}
\xymatrix{
\mathbf{1}\ar[r] & \PGOs^+_8\ar[r] & \AAut_\FF(\cL)\ar[r]^-{\pi} & \mathbf{S}_3\ar[r] & \mathbf{1}
}
\end{equation}
where $\PGOs^+_8$ is the group scheme of inner automorphisms (which corresponds to the algebraic group $\PGO^+_8$, see e.g. \cite[\S 12.A]{KMRT}) and $\mathbf{S}_3$ is the constant group scheme corresponding to the symmetric group $S_3$. This sequence can be split by identifying $\cL$ with the {\em triality Lie algebra} (see Definition \ref{df:tri} below) of the {\em para-Cayley algebra} $\cC$, i.e., the Cayley algebra equipped with the new product $x\bullet y=\bar{x}\bar{y}$, where juxtaposition denotes the usual product of $\cC$ and bar denotes its standard involution (see e.g. \cite[\S 4.1]{EKmon}). If we use the standard model for Lie algebras of series $D$, i.e., identify $\cL$ with the skew elements in $\cR=M_8(\FF)$ with respect to an orthogonal involution $\sigma$, then the restriction $\AAut_\FF(\cR,\sigma)\to\AAut_\FF(\cL)$ is a closed imbedding whose image coincides with the semidirect product of $\PGOs^+_8$ and the constant group scheme corresponding to one of the subgroups of order $2$ in $S_3$ (see e.g. \cite[\S 3.1]{EKmon}).

Now suppose that we have a grading $\Gamma:\;\cL=\bigoplus_{g\in G}\cL_g$ by a group $G$. Since the elements of the support of a group grading on a simple Lie algebra necessarily commute, we will always assume that $G$ is {\em abelian}. (Since the support is finite, we may also assume $G$ finitely generated.) Then $\Gamma$ is equivalent to a morphism $\eta=\eta_\Gamma\colon G^D\to\AAut_\FF(\cL)$ where $G^D$ is the Cartier dual of $G$ (i.e., the affine group scheme represented by the group algebra $\FF G$, so $G^D(\cS)=\Hom(G,\cS^\times)$ for every $\cS$ in $\Alg_\FF$) and $\eta$ is defined in Equation \eqref{eq:def_eta} below --- the details can be found in e.g. \cite[\S 1.4]{EKmon}. The image $\pi\eta(G^D)$ is an abelian subgroupscheme of $\mathbf{S_3}$. Since the subgroupschemes of a constant group scheme correspond to subgroups, here we have three possibilities: the image has order $1$, $2$ or $3$. The grading $\Gamma$ will be said to have {\em Type I, II or III}, respectively. (We use a capital letter here to distinguish from the other meaning of {\em type} common in the literature on gradings, which refers to the sequence of integers that records the number of homogeneous components of each dimension.) The subgroupscheme $\eta^{-1}(\eta(G^D)\cap\PGOs^+_8)$ of $G^D$ corresponds to a subgroup $H$ of $G$ of order $1$, $2$ or $3$, respectively. We will refer to $H$ as the {\em distinguished subgroup} of $\Gamma$. It is the smallest subgroup of $G$ such that the induced $G/H$-grading is of Type I.

If $\Gamma$ is of Type I then the image $\eta(G^D)$ lies in $\PGOs^+_8$, which is a subgroupscheme of index $2$ in $\PGOs_8=\AAut_\FF(\cR,\sigma)$. In the case of Type II, applying an outer automorphism of order $3$ if necessary, we may assume that $\eta(G^D)$ lies in $\AAut_\FF(\cR,\sigma)$. Therefore, any $G$-grading $\Gamma$ on $\cL$ of Type I or II is isomorphic to the restriction of a $G$-grading $\Gamma'$ on the algebra with involution $(\cR,\sigma)$. For this reason, we will sometimes collectively refer to gradings of Type I and II as {\em matrix gradings} (respectively, ``inner'' and ``outer''). They were classified in \cite{BK10} (see also \cite[\S 2.4]{EKmon}) up to {\em matrix isomorphism}, i.e., up to the action of $\Aut_\FF(\cR,\sigma)$. Note that there is a subtlety here: one isomorphism class of Type I gradings on $\cL$ may correspond to $1$, $2$ or $3$ isomorphism classes of gradings on $(\cR,\sigma)$ --- see Section \ref{s:Type_I}. No such difficulty arises for Type II gradings.

Our main concern in this paper are Type III gradings on $\cL$. We note that, since $\pi\eta(G^D)$ is a diagonalizable subgroupscheme, the group algebra of the corresponding subgroup of $S_3$ must be semisimple (being isomorphic to the Hopf dual of the group algebra of a finite subgroup of $G$). If $\chr\FF=3$ then the group algebra of the cyclic group of order $3$ is not semisimple (in other words, the corresponding constant group scheme is not diagonalizable), hence {\em Type III gradings  do not occur in characteristic~$3$}. 

An essential ingredient in our approach is the concept of {\em trialitarian algebra} introduced in \cite[Chapter X]{KMRT}, which is a central simple associative algebra over a cubic \'{e}tale algebra $\LL$ equipped with an orthogonal involution and some additional structure. The general definition is quite involved; here we will only need a special case of trialitarian algebras arising as endomorphisms of so-called {\em cyclic composition algebras} (see Definitions \ref{df:cyclic_comp} and \ref{df:tri_alg}). As shown in \cite[\S 45.C]{KMRT}, over any field of characteristic different from $2$, any simple Lie algebra $\cL$ of type $D_4$ is isomorphic to a canonically defined Lie subalgebra of a unique (up to isomorphism) trialitarian algebra $E$. 
Moreover, the restriction $\AAut_\FF(E)\to\AAut_\FF(\cL)$ is an isomorphism. This means, in particular, that any $G$-grading on $\cL$ is the restriction of a unique $G$-grading on $E$, hence $\cL$ and $E$ have the same classifications of gradings.

The structure of the paper is the following. In Section~\ref{s:preliminaries}, we introduce the necessary background on gradings and the objects that will be our main tools: composition algebras, cyclic composition algebras and trialitarian algebras. Section~\ref{s:Type_I} discusses Type I gradings, especially the facts that are relevant to the above-mentioned subtlety in their classification and will be crucial in Section~\ref{s:lifting}, where we show how to reduce the classification of Type III gradings on $\cL$ (or, equivalently, on the trialitarian algebra $E$) to the corresponding cyclic composition algebra. A description of Type III gradings on the latter is given in Section~\ref{s:Type_III_fine}, from where we derive the classification of fine gradings on $\cL$ up to equivalence, under the assumption $\chr\FF\ne 2$, thus extending the result known for characteristic $0$. In Section~\ref{s:Type_III}, we obtain the classification of Type III gradings by a fixed group $G$ up to isomorphism, which is new even for characteristic $0$. (If the reader is only interested in characteristic $0$, then there is no need to deal with affine group schemes: it is sufficient to consider the algebraic groups $\Aut_\FF(\cU)$ and $\wh{G}=\Hom(G,\FF^\times)$, which are the $\FF$-points of the schemes $\AAut_\FF(\cU)$ and $G^D$, respectively.) An appendix on graded modules is included with a twofold purpose: to complete the results in \cite{EK14} on graded modules for the classical simple Lie algebras by including Type III gradings for $D_4$, and to show the analogy of the Brauer invariants introduced in \cite{EK14} with Tits algebras.

%-------------------------------------------------------------------------------------------------------------------------------------
\section{Preliminaries}\label{s:preliminaries}

\subsection{Group gradings on algebras}

Let $\cU$ be an algebra (not necessarily associative) over a field $\FF$ and let $G$ be a group (written multiplicatively).

\begin{df}\label{df:G_graded_alg}
A {\em $G$-grading} on $\cU$ is a vector space decomposition
\[
\Gamma:\;\cU=\bigoplus_{g\in G} \cU_g
\]
such that
$
\cU_g \cU_h\subset \cU_{gh}\quad\mbox{for all}\quad g,h\in G.
$
If such a decomposition is fixed, $\cU$ is referred to as a {\em $G$-graded algebra}.
The nonzero elements $x\in\cU_g$ are said to be {\em homogeneous of degree $g$}, and one writes $\deg_\Gamma x=g$ or just $\deg x=g$ if the grading is clear from the context. The {\em support} of $\Gamma$ is the set $\supp\Gamma\bydef\{g\in G\;|\;\cU_g\neq 0\}$.
\end{df}

If $(\cU,\sigma)$ is an algebra with involution, then we will always assume $\sigma(\cU_g)=\cU_g$ for all $g\in G$.

There is a more general concept of grading: a decomposition $\Gamma:\;\cU=\bigoplus_{s\in S}\cU_s$ into nonzero subspaces indexed by a set $S$ and having the property that, for any $s_1,s_2\in S$ with $\cU_{s_1}\cU_{s_2}\ne 0$, there exists (unique) $s_3\in S$ such that $\cU_{s_1}\cU_{s_2}\subset\cU_{s_3}$. For such a decomposition $\Gamma$, there may or may not exist a group $G$ containing $S$ that makes $\Gamma$ a $G$-grading. If such a group exists, $\Gamma$ is said to be a {\em group grading}. However, $G$ is usually not unique even if we require that it should be generated by $S$. The {\em universal grading group} is generated by $S$ and has the defining relations $s_1s_2=s_3$ for all $s_1,s_2,s_3\in S$ such that $0\ne\cU_{s_1}\cU_{s_2}\subset\cU_{s_3}$ (see e.g. \cite[Chapter 1]{EKmon} for details).

Here we will deal exclusively with abelian groups, and we will sometimes write them additively. Gradings by abelian groups often arise as eigenspace decompositions with respect to a family of commuting diagonalizable automorphisms. If $\FF$ is algebraically closed and $\chr\FF=0$ then all abelian group gradings on finite-dimensional algebras can be obtained in this way. Over an arbitrary field, a $G$-grading $\Gamma$ on $\cU$ is equivalent to a morphism of affine group schemes $\eta_\Gamma:G^D\to\AAut_\FF(\cU)$ as follows: for any $\cR\in\Alg_\FF$, the corresponding homomorphism of groups 
$(\eta_\Gamma)_\cR:\Alg_\FF(\FF G,\cR)\to\Aut_\cR(\cU\ot\cR)$ is defined by
\begin{equation}\label{eq:def_eta}
(\eta_\Gamma)_\cR(f)(x\ot r)=x\ot f(g)r\;\text{for all}\;x\in\cU_g, g\in G, r\in\cR, f\in\Alg_\FF(\FF G,\cR).
\end{equation}
Consequently, if we have two algebras, $\cU$ and $\cV$, and a morphism $\theta\colon\AAut_\FF(\cU)\to\AAut_\FF(\cV)$ then any $G$-grading $\Gamma$ on $\cU$ gives rise to a $G$-grading $\theta(\Gamma)$ on $\cV$ by setting $\eta_{\theta(\Gamma)}\bydef\theta\circ\eta_\Gamma$. 

Let $\Gamma:\, \cU=\bigoplus_{g\in G} \cU_g$ and $\Gamma':\,\cU'=\bigoplus_{h\in H} \cU'_h$
be two group gradings, with supports $S$ and $T$, respectively.
We say that $\Gamma$ and $\Gamma'$ are {\em equivalent} if there exists an isomorphism of algebras $\vphi\colon\cU\to\cU'$ and a bijection $\alpha\colon S\to T$ such that $\varphi(\cU_s)=\cU'_{\alpha(s)}$ for all $s\in S$. If $G$ and $H$ are universal grading groups then $\alpha$ extends to an isomorphism $G\to H$. In the case $G=H$, the 
$G$-gradings $\Gamma$ and $\Gamma'$ are {\em isomorphic} if $\cU$ and $\cU'$ are isomorphic as $G$-graded algebras, i.e., if there exists an isomorphism of algebras $\vphi\colon\cU\to\cU'$ such that $\varphi(\cU_g)=\cU'_g$ for all $g\in G$. Note that $\theta\colon\AAut_\FF(\cU)\to\AAut_\FF(\cV)$ sends isomorphic gradings on $\cU$ to isomorphic gradings on $\cV$.

If $\Gamma:\,\cU=\bigoplus_{g\in G} \cU_g$ and $\Gamma':\,\cU=\bigoplus_{h\in H} \cU'_h$ are two gradings on the same algebra, with supports $S$ and $T$, respectively, then we will say that $\Gamma'$ is a {\em refinement} of $\Gamma$ (or $\Gamma$ is a {\em coarsening} of $\Gamma'$) if for any $t\in T$ there exists (unique) $s\in S$ such that $\cU'_t\subset\cU_s$. If, moreover, $\cU'_t\ne\cU_s$ for at least one $t\in T$, then the refinement is said to be {\em proper}. A grading $\Gamma$ is said to be {\em fine} if it does not admit any proper refinement.

Given a $G$-grading $\Gamma:\,\cU=\bigoplus_{g\in G} \cU_g$, any group homomorphism $\alpha\colon G\to H$ induces an $H$-grading ${}^\alpha\Gamma$ on $\cU$ whose homogeneous component of degree $h$ is the sum of all $\cU_g$ with $\alpha(g)=h$. Note that $\eta_{{}^\alpha\Gamma}=\eta_\Gamma\circ\alpha^D$ and hence $\theta({}^\alpha\Gamma)={}^\alpha\theta(\Gamma)$. Clearly, ${}^\alpha\Gamma$ is a coarsening of $\Gamma$ (not necessarily proper). If $G$ is the universal group of $\Gamma$ then every coarsening of $\Gamma$ is obtained in this way. If $\Gamma$ and $\Gamma'$ are two gradings, with universal groups $G$ and $H$, then $\Gamma'$ is equivalent to $\Gamma$ if and only if $\Gamma'$ is isomorphic to ${}^\alpha\Gamma$ for some group isomorphism $\alpha\colon G\to H$. It follows that, if universal groups are used, $\theta$ preserves equivalence of gradings.

%-------------------------------------------
\subsection{Graded division algebras over algebraically closed fields}

Let $G$ be a group. If $\cR$ is an associative algebra with a $G$-grading such that $\cR$ is graded simple and satisfies the descending chain condition on graded left ideals then, by the graded version of a classical result, there exists a graded division algebra $\cD$ over $\FF$ and a finite-dimensional graded right vector space $W$ over $\cD$ such that $\cR\cong\End_\cD(W)$ as a $G$-graded algebra (\cite[Theorem 2.6]{EKmon}). Here by a {\em graded division algebra} we mean a unital associative algebra with a $G$-grading such that every nonzero homogeneous element is invertible. A {\em graded vector space} over $\cD$ is just a graded $\cD$-module (in the obvious sense), which is automatically free.

Now we collect for future use some general facts about finite-dimensional graded division algebras in the following situation: $\FF$ is algebraically closed and $G$ is abelian. 

Let $\cD$ be a graded division algebra and let $T$ be the support of the grading on $\cD$. Then $T$ is a subgroup of $G$ and $\cD$ can be identified with a twisted group algebra $\FF^\tau T$ for some $2$-cocycle $\tau\colon T\times T\to\FF^\times$. Indeed, the identity component $\cD_e$ is a division algebra over $\FF$, so $\cD_e=\FF$ and hence all nonzero homogeneous components have dimension $1$. We fix a basis $X_t$ in each of them and write $X_s X_t=\tau(s,t)X_{st}$ ($s,t\in T$). Define $\beta=\beta_\tau$ by $\beta(s,t)\bydef\frac{\tau(s,t)}{\tau(t,s)}$. This is an alternating bicharacter $T\times T\to\FF^\times$, independent of the scaling of the $X_t$ since $X_s X_t=\beta(s,t)X_t X_s$.  

The pair $(T,\beta)$ determines $\cD$ up to isomorphism of graded algebras. Indeed, the graded division algebras with support $T$ are classified by the cohomology class $[\tau]\in \mathrm{H}^2(T,\FF^\times)$ (see e.g. \cite[Theorem 2.13]{EKmon}), and we can use a standard cohomological result: the quotient of the group of symmetric $2$-cocycles $\mathrm{Z}^2_{sym}$ by the $2$-coboundaries $\mathrm{B}^2$ can be identified with $\mathrm{Ext}(T,\FF^\times)$ in the category of abelian groups, but $\FF^\times$ is a divisible group, so $\mathrm{Ext}(T,\FF^\times)$ is trivial, i.e., $\mathrm{B}^2=\mathrm{Z}^2_{sym}$. Since the mapping $\tau\mapsto\beta_\tau$ from $\mathrm{Z}^2$ to the group of alternating bicharacters, $\Hom(T\wedge T,\FF^\times)$, is a homomorphism with kernel $\mathrm{Z}^2_{sym}$, we obtain an injection $[\tau]\mapsto\beta_\tau$ from $\mathrm{H}^2(T,\FF^\times)$ to $\Hom(T\wedge T,\FF^\times)$ (in fact, an isomorphism, see \cite{Yamazaki}).

We will need the $G$-graded Brauer group of $\FF$, which we will denote by $B_G(\FF)$. There are several versions of Brauer group associated to a field (or, more generally, a commutative ring) $\FF$ and an abelian group $G$. They consist of equivalence classes of certain $\FF$-algebras equipped with a $G$-grading, a $G$-action or both. The multiplication is induced by tensor product of algebras or its twisted version. The Brauer group we need here is the one defined in \cite{PP}, where there is only a $G$-grading and the tensor product is not twisted. For a field $\FF$ and an abelian group $G$, the group $B_G(\FF)$ consists of the equivalence classes of finite-dimensional central simple associative $\FF$-algebras equipped with a $G$-grading, where $\cA_1\sim\cA_2$ if and only if there exist finite-dimensional $G$-graded $\FF$-vector spaces $V_1$ and $V_2$ such that $\cA_1\otimes\End(V_1)\cong\cA_2\otimes\End(V_2)$ as graded algebras. Every class contains a unique graded division algebra (up to isomorphism). The classical Brauer group $B(\FF)$ is imbedded in $B_G(\FF)$ as the classes containing a division algebra with trivial $G$-grading. As shown in \cite{PP}, if $G$ is finite and $\FF$ contains enough roots of unity (so that $|\wh{G}|=|G|$) then there is a split short exact sequence
$
1\to B(\FF)\to B_G(\FF)\to \mathrm{H}^2(\wh{G},\FF^\times)\to 1
$. If $\FF$ is algebraically closed (as we will assume in this paper) then, for any abelian group $G$, the Brauer group $B_G(\FF)$ is isomorphic to the group of alternating continuous bicharacters of the pro-finite group $\wh{G_0}$ where $G_0$ is the torsion subgroup of $G$ if $\chr{\FF}=0$ and the $p'$-torsion subgroup of $G$ if $\chr{\FF}=p>0$ (i.e., the set of all elements whose order is finite and coprime with $p$). Namely, the class of a $G$-graded matrix algebra corresponds to its ``commutation factor'' $\hat{\beta}$, which can be seen as an alternating bicharacter $\wh{G}\times \wh{G}\to\FF^\times$ (by means of the canonical homomorphism $\wh{G}\to\wh{G_0}$) and determines the parameters $(T,\beta)$ of the graded division algebra representing the class (see \cite[\S 2]{EK14}). Note that in this case $\beta$ is nondegenerate since the graded division algebra is central.

Let $\cK$ be the center of a graded division algebra $\cD$, so $\cK$ is a graded field over $\FF$. Let $H\subset T$ be the support of the grading on $\cK$. The following result holds for arbitrary $\FF$.

\begin{lemma}\label{lm:not_divisible}
Let $\cK$ be a graded field that is finite-dimensional over its identity homogeneous component $\FF$. If $\cK$ is separable as an algebra over $\FF$ then $\dim_\FF\cK$ is not divisible by $\chr{\FF}$.
\end{lemma}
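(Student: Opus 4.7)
My plan is to unpack the structure of $\cK$ explicitly. Since $\cK$ is a graded field with $\cK_e=\FF$, every nonzero homogeneous element is invertible and, for each $h\in H$, the component $\cK_h$ is one-dimensional over $\FF$: if $x,y\in\cK_h\setminus\{0\}$, then $yx^{-1}\in\cK_e=\FF$. Hence $\dim_\FF\cK=|H|$, and we must prove that $p:=\chr\FF$ does not divide $|H|$. Since $\cK$ is a field it is commutative, which forces $H$ to be abelian and the structure cocycle $\tau$ (defined by $X_sX_t=\tau(s,t)X_{st}$ for a choice of homogeneous basis) to be symmetric, so that $\cK\cong\FF^\tau H$ as $\FF$-algebras.

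Next I would pass to the algebraic closure. We have $\cK\otimes_\FF\Falg\cong\Falg^{\,\tau}H$ with the same (symmetric) cocycle. Because $\Falg^\times$ is divisible, the cohomological argument recalled just above the statement of the lemma applies verbatim, yielding $\mathrm{Z}^2_{sym}(H,\Falg^\times)=\mathrm{B}^2(H,\Falg^\times)$. Rescaling the basis $\{X_h\otimes 1\}$ by the corresponding $1$-cochain, we obtain an isomorphism of $\Falg$-algebras $\cK\otimes_\FF\Falg\cong\Falg H$, the ordinary group algebra.

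Finally, I would invoke the standard characterization: a finite-dimensional commutative $\FF$-algebra is separable over $\FF$ if and only if it becomes reduced upon extending scalars to $\Falg$. Assume for contradiction that $p>0$ and $p\mid|H|$. Pick $h\in H$ of order $p$; then the subalgebra of $\Falg H$ generated by $h$ is
\[
\Falg[x]/(x^p-1)=\Falg[x]/\bigl((x-1)^p\bigr),
\]
which contains the nonzero nilpotent $x-1$. This nilpotent persists in the ambient algebra $\Falg H$, so $\cK\otimes_\FF\Falg$ is not reduced, contradicting separability.

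There is no real obstacle: the only subtle point is the untwisting step, but it is exactly the divisibility argument already used in the paper for the field $\Falg$. Note also that the same reasoning works directly inside the cyclic subgroup $\langle h\rangle$, so the full cohomology apparatus can be avoided if desired by extracting a $p$-th root of $\tau$ restricted to $\langle h\rangle\times\langle h\rangle$.
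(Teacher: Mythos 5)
Your argument is correct, but it takes a heavier route than the paper's. The paper's proof is strikingly short and stays entirely inside $\cK$: pick any nonzero $x\in\cK_g$ where $g\in H$ has order $p$. Then $x^p\in\cK_e=\FF$, say $x^p=\lambda$, and the powers $1,x,\ldots,x^{p-1}$ lie in pairwise distinct homogeneous components, hence are linearly independent over $\FF$. This forces the minimal polynomial of $x$ to be exactly $X^p-\lambda$, whose derivative vanishes, so $x$ is inseparable over $\FF$ and we are done --- no base change, no cohomology. Your proposal reaches the same conclusion by first untwisting the cocycle over $\Falg$ (using divisibility of $\Falg^\times$, as in the cohomological discussion preceding the lemma), and then observing the nilpotent $h-1$ in $\Falg H$. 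Both routes pivot on the Frobenius identity $X^p-\lambda=(X-\mu)^p$; the paper exhibits an inseparable element directly in $\cK$, whereas you first trivialize the twist after base change and then invoke the reducedness characterization of separability. The untwisting step is the only piece of genuine extra machinery in your approach, and your closing remark --- that one can work inside $\langle h\rangle$ and extract a $p$-th root of the restricted cocycle --- is essentially the germ of the paper's argument; pushing it one step further, you would see that no untwisting is needed at all, since the cocycle restricted to $\langle g\rangle$ merely records the scalar $x^p=\lambda$, and the inseparability of $X^p-\lambda$ can be read off over $\FF$ itself.
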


\begin{proof}
Assume, to the contrary, that $\chr{\FF}=p>0$ and $p$ divides $\dim_\FF\cK=|H|$. Then $H$ contains an element $g$ of order $p$. Pick a nonzero element $x\in\cK_g$. Then $x^p\in\cK_e$ and the powers $1, x, x^2,\ldots,x^{p-1}$ are linearly independent over $\FF$, hence the minimal polynomial of $x$ has the form $X^p-\lambda$, $\lambda\in\FF$, which contradicts separability.
\end{proof}

Let $\wb{G}=G/H$ and $\wb{T}=T/H$. Since $H$ is precisely the radical of the alternating bicharacter $\beta$, we obtain a nondegenerate alternating bicharacter $\bar{\beta}\colon\wb{T}\times\wb{T}\to\FF^\times$, hence $|\wb{T}|$ is not divisible by $\chr{\FF}$. Assume $\cD$ is semisimple as an ungraded algebra. Then $\cK$ is the direct product of $k$ copies of $\FF$ where, by Lemma \ref{lm:not_divisible}, $k=|H|$ is not divisible by $\chr{\FF}$. 
Accordingly, we can write $\cD=\cD_1\times\cdots\times\cD_k$ where $\cD_i$ are the ideals generated by the minimal central idempotents of $\cD$, so they are simple and $\wb{G}$-graded. Since $\chr{\FF}$ does not divide $|H|$, the $H$-grading on $\cK$ is equivalent to an action of the group of characters $\wh{H}=\{\chi_1,\ldots,\chi_k\}$, which must permute the factors of $\cK$ transitively since $\cK$ is a graded field. Relabeling if necessary, we may assume that $\chi_i$ sends the first factor to the $i$-th one. We can extend the characters $\chi_i$ to $T$ in some way. Then each $\chi_i$ acts as an automorphism of $\cD$ that preserves the $G$-grading on $\cD$, hence $\chi_i$ yields an isomorphism of $\wb{G}$-graded algebras $\cD_1\to\cD_i$.

Since the identity component of each $\cD_i$ (with respect to the $\wb{G}$-grading) has dimension $1$, they are graded division algebras (see e.g. \cite[Lemma 2.20]{EKmon}). Therefore, we can regard them as elements of the $\wb{G}$-graded Brauer group $B_{\wb{G}}(\FF)$.

\begin{proposition}\label{prop:beta_bar}
Let $\cD$ be a graded division algebra over an algebraically closed field $\FF$ of arbitrary characteristic. Assume that $\cD$ is finite-dimensional and semisimple as an algebra, decomposing into simple components $\cD_1\times\cdots\times\cD_k$. Define $\wb{G}$, $\wb{T}$ and $\bar{\beta}$ as above. Then the $\cD_i$ are $\wb{G}$-graded division algebras, all isomorphic to each other, and the $\wb{G}$-graded Brauer class of $\cD_i$ corresponds to the pair $(\wb{T},\bar{\beta})$.
\end{proposition}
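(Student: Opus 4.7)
Most of the work is already done in the paragraphs preceding the statement; the plan is to fill in the remaining details in three short steps, which I would merge into a single proof.

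First, the claim that all $\cD_i$ are isomorphic as $\wb G$-graded algebras is essentially established above: extend each character $\chi_i\in\wh H$ arbitrarily to $T$, so that it becomes an automorphism of $\cD$ that scales each $X_t$ and hence preserves the $G$-grading (and a fortiori the $\wb G$-grading); transitivity of the $\wh H$-action on the primitive central idempotents $e_1,\ldots,e_k$ then turns these automorphisms into $\wb G$-graded isomorphisms $\cD_1\to\cD_i$. So only the identification of the support and of the commutation factor of $\cD_i$ remain.

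For the support, I would count dimensions. Since $H\subset T$, every coset $gH$ is either entirely contained in $T$ or disjoint from $T$; hence, viewing the decomposition with respect to the $\wb G$-grading, $\dim_\FF\cD_{gH}=|H|$ if $gH\in\wb T$ and $\dim_\FF\cD_{gH}=0$ otherwise. As $\cD=\cD_1\oplus\cdots\oplus\cD_k$ splits as a $\wb G$-graded $\FF$-vector space with $k=|H|$ isomorphic summands, each $(\cD_i)_{gH}$ is one-dimensional for $gH\in\wb T$ and zero elsewhere. This simultaneously confirms that $\cD_i$ is a $\wb G$-graded division algebra and pins its support down to $\wb T$.

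For the commutation factor, pick any $sH,tH\in\wb T$ and nonzero homogeneous $Y\in(\cD_i)_{sH}$, $Y'\in(\cD_i)_{tH}$, lifted to $\cD$ as $Y=\sum_{h\in H}\lambda_h X_{sh}$ and $Y'=\sum_{h'\in H}\mu_{h'}X_{th'}$. Because $H$ is precisely the radical of $\beta$, we have $\beta(sh,th')=\beta(s,t)$ for all $h,h'\in H$, so $X_{sh}X_{th'}=\beta(s,t)X_{th'}X_{sh}$ in $\cD$; summing yields $YY'=\beta(s,t)Y'Y=\bar\beta(sH,tH)Y'Y$, which is the desired commutation relation in $\cD_i$. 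There is no real obstacle in this proof; the only mild subtlety worth spelling out is that each $(\cD_i)_{sH}$ is nonzero for $sH\in\wb T$, which is precisely what the dimension count in the previous step ensures.
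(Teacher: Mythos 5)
Your proof is correct and follows essentially the same route as the paper: the isomorphisms $\cD_1\to\cD_i$ come from extending characters of $\wh H$ to $T$, and the commutation factor of $\cD_i$ is computed by observing that $\beta$ factors through $\wb T=T/H$ (so that the relation $X_sX_t=\beta(s,t)X_tX_s$ descends to $\bar\beta$ in every $\wb G$-homogeneous component of $\cD$ and hence of $\cD_i$). The only cosmetic difference is that you supply an explicit dimension count to identify the support of $\cD_i$ with $\wb T$ and to note that the homogeneous components are one-dimensional, whereas the paper relies on the observations and the citation to \cite[Lemma 2.20]{EKmon} made in the paragraph just before the statement; both are fine.
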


\begin{proof}
The only statement that has not yet been proved is the one about the Brauer class. Indeed, $\beta$ is defined by the equation $X_sX_t=\beta(s,t)X_tX_s$ for all $s,t\in T$. Since the value of $\beta$ depends only on the cosets of $s$ and $t$ in $\wb{T}$, in the $\wb{G}$-grading we have $xy=\bar{\beta}(\bar{s},\bar{t})yx$ for all $x\in\cD_{\bar{s}}$ and $y\in\cD_{\bar{t}}$. Taking the projection onto $\cD_i$, we see that $\bar{\beta}$ satisfies the same property with respect to the $\wb{G}$-grading of $\cD_i$ as $\beta$ with respect to the $G$-grading of $\cD$.
\end{proof}

%-------------------------------------------
\subsection{Composition algebras}

Recall that a (finite-dimensional) {\em composition algebra} is a nonassociative algebra $\cA$ with a nonsingular quadratic form $n$ such that $n(xy)=n(x)n(y)$ for all $x,y\in\cA$. It is known that $\dim\cA$ can be $1$, $2$, $4$ or $8$. The unital composition algebras are called {\em Hurwitz algebras} and can be obtained using the Cayley--Dickson doubling process. The ones of dimension $8$ are called {\em octonion}, or {\em Cayley algebras}. If the ground field is algebraically closed then, up to isomorphism, there is only one Hurwitz algebra in each dimension. 
Here we will also need another kind of composition algebras.

\begin{df}\label{df:symmetric}
A composition algebra $\cS$, with multiplication $\star$ and norm $n$, is said to be \emph{symmetric} if the polar form of the norm, $n(x,y)\bydef n(x+y)-n(x)-n(y)$, is associative:
\[
n(x\star y,z)=n(x,y\star z),
\]
for all $x,y,z\in\cS$.
\end{df}

As a consequence of this definition, $\cS$ satisfies the following identities:
\[
(x\star y)\star x=n(x)y=x\star(y\star x).
\]
Over an algebraically closed field, there are, up to isomorphism, only two symmetric composition algebras of dimension $8$: the para-Cayley and the Okubo algebras (see \cite{EPI96} or \cite[Theorem 4.44]{EKmon}). Both can be obtained from the Cayley algebra $\cC$ by introducing a new product: in the first case, $x\star y=x\bullet y\bydef \bar{x}\bar{y}$, where  $\bar{x}\bydef n(x,1)1-x$ is the standard involution, and in the second case, $x\star y=\tau(x)\bullet\tau^2(y)$, where $\tau$ is a certain automorphism of order $3$. If $\chr\FF\ne 3$ then the Okubo algebra can also be realized as the space of traceless $3\times 3$ matrices with a certain product (see e.g. \cite[\S 4.6]{EKmon} for details).

From now on, we assume $\chr{\FF}\ne 2$.

\begin{df}\label{df:tri}
Let $\cS$ be a symmetric composition algebra of dimension $8$. Its \emph{triality Lie algebra} is defined as
\[
\tri(\cS,\star,n)=\{(d_1,d_2,d_3)\in\frso(\cS,n)^3\;|\; d_1(x\star y)=d_2(x)\star y+x\star d_3(y)\ \forall x,y\in\cS\}.
\]
This is a Lie algebra with componentwise multiplication.
\end{df}

It turns out that this definition is symmetric with respect to cyclic permutations of $(d_1,d_2,d_3)$, and each projection determines an isomorphism $\tri(\cS)\to\frso(\cS,n)$, so $\tri(\cS)$ is a Lie algebra of type $D_4$ (see e.g. \cite[\S 5.5, \S 6.1]{EKmon} or \cite[\S 45.A]{KMRT}, but note that in the latter the ordering of triples differs from ours). 
This fact is known as the ``local triality principle''. There is also a ``global triality principle'', as follows.

\begin{df}\label{df:Tri}
Let $\cS$ be a symmetric composition algebra of dimension $8$. Its \emph{triality group} is defined as
\[
\TRI(\cS,\star,n)=\{(f_1,f_2,f_3)\in\Ort(\cS,n)^3\;|\; f_1(x\star y)=f_2(x)\star f_3(y)\ \forall x,y\in\cS\}.
\]
This is an algebraic group with componentwise multiplication.
\end{df}

It turns out that this definition is symmetric with respect to cyclic permutations of $(f_1,f_2,f_3)$, and $\TRI(\cS)$ is isomorphic to $\Spin(\cS,n)$. In fact, this isomorphism can be defined at the level of the corresponding group schemes (see \cite[\S 35.C]{KMRT} for details). The said cyclic permutations determine outer actions of $A_3$ on $\Spin(\cS,n)$ and its Lie algebra $\frso(\cS,n)$. If $\cS$ is a para-Cayley algebra then one can define an outer action of $S_3$ using $(f_1,f_2,f_3)\mapsto(\bar{f}_1,\bar{f}_3,\bar{f}_2)$ and $(d_1,d_2,d_3)\mapsto(\bar{d}_1,\bar{d}_3,\bar{d}_2)$ as the action of the transposition $(2,3)$, where $\bar{f}$ is defined by $\bar{f}(\bar{x})=\overline{f(x)}$. This allows us to split the exact sequence \eqref{eq:exact_D4}.

%-------------------------------------------
\subsection{Cyclic composition algebras}

A convenient way to ``package'' triples of maps as above is the following concept due to Springer. Let $\LL$ be a Galois algebra over $\FF$ with respect to the cyclic group of order $3$ (see e.g. \cite[\S 18.B]{KMRT}). Fix a generator $\rho$ of this group. For any $\ell\in\LL$, define the norm $N(\ell)=\ell\rho(\ell)\rho^2(\ell)$, the trace $T(\ell)=\ell+\rho(\ell)+\rho^2(\ell)$, and the adjoint $\ell^\#=\rho(\ell)\rho^2(\ell)$. Our main interest is in the case $\LL=\FF\times\FF\times\FF$ where $\rho(\ell_1,\ell_2,\ell_3)=(\ell_2,\ell_3,\ell_1)$.

\begin{df}\label{df:cyclic_comp}
A {\em cyclic composition algebra} over $(\LL,\rho)$ is a free $\LL$-module $V$ with a nonsingular $\LL$-valued quadratic form $Q$ and an $\FF$-bilinear multiplication $(x,y)\mapsto x*y$ that is $\rho$-semilinear in $x$ and $\rho^2$-semilinear in $y$ and satisfies the following identities:
\begin{align*}
Q(x*y)&=\rho(Q(x))\rho^2(Q(y)),\\
b_Q(x*y,z)&=\rho(b_Q(y*z,x))=\rho^2(b_Q(z*x,y)),
\end{align*}
where $b_Q(x,y)\bydef Q(x+y)-Q(x)-Q(y)$ is the polar form of $Q$.
An {\em isomorphism} from $(V,\LL,\rho,*,Q)$ to $(V',\LL',\rho',*',Q')$ is a pair of $\FF$-linear isomorphisms 
$\varphi_0\colon(\LL,\rho)\to(\LL',\rho')$ (i.e., $\varphi_0$ is an isomorphism that satisfies $\varphi_0\rho=\rho'\varphi_0$) and $\varphi_1\colon V\to V'$ such that $\varphi_1$ is $\varphi_0$-semilinear, $\varphi_1(x*y)=\varphi_1(x)*'\varphi_1(y)$ and $\varphi_0(Q(x))=Q'(\varphi_1(x))$ for all $x,y\in V$.
\end{df}

As a consequence, $V$ also satisfies
\begin{equation}\label{eq:cyclic_comp_id}
(x*y)*x=\rho^2(Q(x))y\quad\text{and}\quad x*(y*x)=\rho(Q(x))y.
\end{equation}
One checks that, for $\lambda,\mu\in\LL^\times$, the new product $x\mathbin{\tilde{*}}y=\lambda(x*y)$ and the new quadratic form $\tilde{Q}(x)=\mu Q(x)$ define a cyclic composition algebra if and only if $\mu=\lambda^\#$ (\cite[Lemma 36.1]{KMRT}). We will say that an isomorphism $(V,\LL,\rho,\tilde{*},\tilde{Q})\to(V',\LL',\rho',*',Q')$ is a {\em similitude} from $(V,\LL,\rho,*,Q)$ to $(V',\LL',\rho',*',Q')$ with {\em parameter} $\lambda$ and {\em multiplier} $\lambda^\#$. In particular, for any $\ell\in\LL^\times$, the mappings $\varphi_0=\id$ and $\varphi_1(x)=\ell x$ define a similitude from $(V,\LL,\rho,*,Q)$ to itself with parameter $\ell^{-1}\ell^\#$ and multiplier $\ell^2$.

If $(\cS,\star,n)$ is a symmetric composition algebra then $\cS\ot\LL$ becomes a cyclic composition algebra with $Q(x\ot\ell)=n(x)\ell^2$ (extended to sums in the obvious way using the polar form of $n$) and $(x\ot\ell)*(y\ot m)=(x\star y)\ot\rho(\ell)\rho^2(m)$. With $\LL=\FF\times\FF\times\FF$ and $\rho(\ell_1,\ell_2,\ell_3)=(\ell_2,\ell_3,\ell_1)$, this gives $V=\cS\times\cS\times\cS$ with $Q=(n,n,n)$ and
\begin{equation}\label{df:cyclic_prod}
(x_1,x_2,x_3)*(y_1,y_2,y_3)=(x_2\star y_3,x_3\star y_1,x_1\star y_2).
\end{equation}
It turns out that, with $\LL$ and $\rho$ as above, any cyclic composition algebra is similar to $\cS\ot\LL$ where $\cS$ is a para-Hurwitz algebra (see e.g. \cite[\S 36, \S 36B]{KMRT}). Hence, for any $\LL$, the $\LL$-rank of a cyclic composition algebra can be $1$, $2$, $4$ or $8$. Also, if $\FF$ is algebraically closed then there is only one isomorphism class of cyclic composition algebras in each rank. (In this case, similar cyclic composition algebras are isomorphic because, for any $\lambda\in\LL^\times$, we can find $\ell\in\LL^\times$ such that $\ell^{-1}\ell^\#=\lambda$.)

Assume now that the rank is $8$. The multiplication \eqref{df:cyclic_prod} allows us to interpret $\TRI(\cS,\star,n)$ as $\Aut_\LL(V,*,Q)$, the group of $\LL$-linear automorphisms (i.e., with $\varphi_0=\id$), and $\TRI(\cS,\star,n)\rtimes A_3$ as $\Aut_\FF(V,\LL,\rho,*,Q)$, the group of all automorphisms (see Definition \ref{df:cyclic_comp}). This interpretation can be made at the level of group schemes:
\[
\AAut_\FF(V,\LL,\rho,*,Q)=\TRIs(\cS,\star,n)\rtimes\mathbf{A}_3\cong\Spins(\cS,n)\rtimes\mathbf{A}_3.
\]
Similarly, $\tri(\cS,\star,n)$ can be interpreted as $\Der_\LL(V,*,Q)$.

\begin{remark}
If $\FF$ is algebraically closed then the para-Cayley and Okubo algebras are ``$\FF$-forms'' of the unique cyclic composition algebra $V$ of rank $8$ (in the sense that, for $\cS$ either the para-Cayley or Okubo algebra, we have $V\cong\cS\ot\LL$ as an $\LL$-module with a quadratic form, while the product is extended by semilinearity). Any outer automorphism of order $3$ defines such an ``$\FF$-form'' of $V$, which must be isomorphic either to the para-Cayley or the Okubo algebra; this explains the fact that there are $2$ conjugacy classes of outer automorphisms of the algebraic group $\Spin_8(\FF)$.
\end{remark}

We will later need the group scheme of similitudes $\SIMs_\FF(V,\LL,\rho,*,Q)$, which is defined as follows. 
Let $\Gs$ be the group scheme of invertible $\FF$-linear endomorphisms of $V$ that are semilinear with respect to some automorphism of $(\LL,\rho)$. This is a subgroupscheme of $\GLs_\FF(V)\times\AAut_\FF(\LL,\rho)$ whose projection onto the second factor can be split by choosing an ``$\FF$-form'' of $V$, thus identifying $\Gs$ with $\GLs_\LL(V)\rtimes\AAut_\FF(\LL,\rho)$. 

\begin{remark}
Here $\GLs_\LL(V)$ is regarded as a group scheme over $\FF$, sending $\cR\in\Alg_\FF$ to the group of invertible elements in $\End_{\LL\ot\cR}(V\ot\cR)$. One could define $\GLs_\LL(V)$ as a group scheme over $\LL$, sending $\cR'\in\Alg_\LL$ to the group of invertible elements in $\End_{\cR'}(V\ot_\LL\cR')$. Our $\GLs_\LL(V)$ is the $\LL/\FF$-corestriction of this latter (i.e., the group scheme over $\FF$ obtained by substituting $\LL\ot\cR$ for $\cR'$). In particular, we will regard the multiplicative group scheme of $\LL$ as a group scheme over $\FF$ and will denote it by $\GLs_1(\LL)$.
\end{remark}

Now, we have natural representations of $\Gs$ in the $\FF$-linear space of $\LL$-bilinear functions $V\times V\to\LL$ and in the $\FF$-linear space of $\FF$-bilinear functions $V\times V\to V$ that are $\rho$-semilinear in the first variable and $\rho^2$-semilinear in the second variable. Then $\AAut_\FF(V,\LL,\rho,*,Q)$ is the intersection of the stabilizers in $\Gs$ of the vectors $b_Q$ and $*$. We define $\SIMs_\FF(V,\LL,\rho,*,Q)$ as the intersection of the stabilizers of the $\LL$-submodules spanned by each of $b_Q$ and $*$.

Using $\FF b_Q$ as an $\FF$-form of the free $\LL$-module $\LL b_Q$, we identify the group scheme of semilinear endomorphisms of $\LL b_Q$ with $\GLs_\LL(\LL b_Q)\rtimes\AAut_\FF(\LL,\rho)=\GLs_1(\LL)\rtimes\mathbf{A}_3$, and similarly for $\LL *$. Thus the representations of $\SIMs_\FF(V,\LL,\rho,*,Q)$ on $\LL *$ and on $\LL b_Q$ give rise to morphisms $\SIMs_\FF(V,\LL,\rho,*,Q)\to\GLs_1(\LL)\rtimes\mathbf{A}_3$, which we denote by $\theta$ and $\theta'$, respectively. We will now obtain a relation between these two morphisms, to be used later. For any $\cR\in\Alg_\FF$, we extend the action of $\rho$ from $\LL$ to $\LL\ot\cR$ by $\cR$-linearity and define $N$ and $\#$ for $\LL\ot\cR$ by the same formulas as before. Hence we obtain an automorphism of the group scheme $\GLs_1(\LL)$ defined by $a\mapsto a^\#$ for all $a\in(\LL\ot\cR)^\times$, $\cR\in\Alg_\FF$. Together with the identity on $\mathbf{A}_3$, this yields an automorphism of $\GLs_1(\LL)\rtimes\mathbf{A}_3$, which we will denote by $(\#,\id)$.

\begin{lemma}\label{lm:lambda_mu}
We have $\theta'=(\#,\id)\theta$.
\end{lemma}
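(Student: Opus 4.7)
The plan is to verify the identity functorially. For $\cR\in\Alg_\FF$, take $\varphi=(\varphi_0,\varphi_1)$ an $\cR$-point of $\SIMs_\FF(V,\LL,\rho,*,Q)$. Since $\varphi$ stabilizes the rank-one $(\LL\otimes\cR)$-submodules $(\LL\otimes\cR)*$ and $(\LL\otimes\cR)b_Q$, we can write $\varphi\cdot *=a\, *$ and $\varphi\cdot b_Q=\ell\, b_Q$ for unique $a,\ell\in(\LL\otimes\cR)^\times$. A direct computation of the action on $m*$ and $mb_Q$ for $m\in\LL\otimes\cR$, using $\FF *$ and $\FF b_Q$ as $\FF$-forms, identifies $\theta(\varphi)=(a,\varphi_0)$ and $\theta'(\varphi)=(\ell,\varphi_0)$ in $\GLs_1(\LL)\rtimes\mathbf{A}_3$. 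Since the $\mathbf{A}_3$-components already coincide and $(\#,\id)$ acts by $(\lambda,\sigma)\mapsto(\lambda^\#,\sigma)$, the lemma reduces to proving the scalar identity $\ell=a^\#$.

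Unwinding the two stabilization relations by the substitution $x\mapsto\varphi_1(x)$, $y\mapsto\varphi_1(y)$ (and polarizing for the second), one obtains the similitude relations
\begin{align*}
\varphi_1(x*y) &= a\cdot(\varphi_1(x)*\varphi_1(y)),\\
Q(\varphi_1(x)) &= \ell^{-1}\varphi_0(Q(x)),
\end{align*}
for all $x,y\in V\otimes\cR$. The key step is now to apply $\varphi_1$ to both sides of the cyclic composition identity $(x*y)*x=\rho^2(Q(x))\,y$ from \eqref{eq:cyclic_comp_id}. On the right, the $\varphi_0$-semilinearity of $\varphi_1$ produces $\varphi_0(\rho^2(Q(x)))\,\varphi_1(y)$. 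On the left, two successive uses of the first similitude relation, combined with the $\rho$-semilinearity of $*$ in the first argument, reduce $\varphi_1((x*y)*x)$ to $a\,\rho(a)\cdot((\varphi_1(x)*\varphi_1(y))*\varphi_1(x))$; invoking \eqref{eq:cyclic_comp_id} once more for the triple $(\varphi_1(x),\varphi_1(y),\varphi_1(x))$ and substituting the second similitude relation, this becomes $a\,\rho(a)\,\rho^2(\ell)^{-1}\,\varphi_0(\rho^2(Q(x)))\,\varphi_1(y)$. Equating and cancelling the common factor $\varphi_0(\rho^2(Q(x)))\,\varphi_1(y)$, which is permissible since $V\otimes\cR$ is a free $(\LL\otimes\cR)$-module of positive rank, yields $a\rho(a)=\rho^2(\ell)$. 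Applying $\rho$ and using $\rho^3=\id$ gives $\ell=\rho(a)\rho^2(a)=a^\#$, as required.

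The main obstacle is essentially bookkeeping: tracking how $\LL$-scalars move across the $\rho$-semilinear product $*$ and across the $\varphi_0$-semilinear map $\varphi_1$, and using that $\varphi_0$ commutes with $\rho$ since $\AAut_\FF(\LL,\rho)$ is abelian. The initial identification $\theta(\varphi)=(a,\varphi_0)$ and $\theta'(\varphi)=(\ell,\varphi_0)$ also deserves some care, but it follows routinely from the definition of the representations of $\SIMs_\FF(V,\LL,\rho,*,Q)$ on $\LL *$ and $\LL b_Q$ described just before the statement of the lemma.
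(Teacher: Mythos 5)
Your proof is correct and follows essentially the same route as the paper: pass to $\cR$-points, extract the two scalars from the actions on $\LL b_Q$ and $\LL*$, observe that both morphisms have the same $\mathbf{A}_3$-component, and establish the scalar relation. The only material difference is that where the paper invokes the fact (restated just before the lemma, citing \cite[Lemma~36.1]{KMRT}) that $\tilde*=\lambda*$, $\tilde Q=\mu Q$ define a cyclic composition algebra iff $\mu=\lambda^\#$, you re-derive this inline by applying $\varphi_1$ to the identity $(x*y)*x=\rho^2(Q(x))y$ — slightly more self-contained, same content. One small point deserving a word: the final cancellation of $\rho^2(\varphi_0(Q(x)))\varphi_1(y)$ over the ring $\LL\ot\cR$ requires that $Q$ represents a unit (not merely that $V\ot\cR$ is free of positive rank), which holds because $Q$ restricts to the Cayley norm $n$ on $\cC\ot 1$ and $n(1)=1$.
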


\begin{proof}
We fix $\cR\in\Alg_\FF$, denote $\LL_\cR=\LL\ot\cR$ and $V_\cR=V\ot\cR$, and extend $b_Q$ and $*$ to $V_\cR$ by $\cR$-linearity. Let $(\varphi_1,\varphi_0)$ be an element of $\SIMs_\FF(V,\LL,\rho,*,Q)(\cR)\subset\Gs(\cR)$, so $\varphi_1$ is an invertible $\varphi_0$-semilinear endomorphism of the $\LL_\cR$-module $V_\cR$. For any $a\in\LL_\cR$, the action of $(\varphi_1,\varphi_0)$ sends $ab_Q$ to the bilinear form $V_\cR\times V_\cR\to\LL_\cR$ defined by $(x,y)\mapsto\varphi_0(aQ(\varphi_1^{-1}(x),\varphi_1^{-1}(y)))$, which must be an element of $\LL_\cR b_Q$. We see that this mapping on $\LL_\cR b_Q$ is $\varphi_0$-semilinear and hence $\theta'_\cR(\varphi_1,\varphi_0)=(\mu,\varphi_0)\in (\LL_\cR)^\times\rtimes\Aut_\cR(\LL_\cR,\rho)$, where $\mu b_Q$ is the image of $b_Q$ under the action. Similarly, $\theta_\cR(\varphi_1,\varphi_0)=(\lambda,\varphi_0)$, where $\lambda *$ is the image of $*$. By construction, $(\varphi_1,\varphi_0)$ is an isomorphism $(V_\cR,\LL_\cR,\rho,*,Q)\to(V_\cR,\LL_\cR,\rho,\tilde{*},\tilde{Q})$ where $\tilde{Q}(x)=\mu Q(x)$ and $x\mathbin{\tilde{*}}y=\lambda(x*y)$. 
%i.e., a similitude of $(V_\cR,\LL_\cR,\rho,*,Q)$ with parameter $\varphi_0^{-1}(\lambda^{-1})$ and multiplier $\varphi_0^{-1}(\lambda^{-1})$
It follows that $\mu=\lambda^\#$.
\end{proof}

%-------------------------------------------
\subsection{The trialitarian algebra $\End_\LL(V)$}

Let $V$ be a cyclic composition algebra over $(\LL,\rho)$ of rank $8$. Consider $E=\End_\LL(V)$. This is a central separable associative algebra over $\LL$. We will denote by $\sigma$ the involution of $E$ determined by the quadratic form $Q$. The even Clifford algebra $\Cl_0(V,Q)$ can be defined purely in terms of $(E,\sigma)$ as the quotient $\Cl(E,\sigma)$ of the tensor algebra of $E$ (regarded as an $\LL$-module) by certain relations --- see \cite[\S 8.B]{KMRT}, where the construction is carried out for a central simple algebra over a field, but we can apply it to each simple factor of $E$ if $\LL=\FF\times\FF\times\FF$. The imbedding of $E$ into its tensor algebra yields a canonical $\LL$-linear map $\kappa\colon E\to\Cl(E,\sigma)$, whose image generates $\Cl(E,\sigma)$, but which is neither injective nor a homomorphism of algebras. This construction  has the advantage of being functorial for isomorphisms of algebras with involution. Thus, $\sigma$ determines a unique involution $\ul{\sigma}$ on $\Cl(E,\sigma)$ such that $\kappa\sigma=\ul{\sigma}\kappa$. Also, any action (respectively, grading) by a group on the algebra with involution $(E,\sigma)$ gives rise to a unique action (respectively, grading) on the algebra with involution $(\Cl(E,\sigma),\ul{\sigma})$ such that $\kappa$ is equivariant (respectively, preserves the degree). In fact, we can define a morphism of group schemes $\AAut_\FF(E,\sigma)\to\AAut_\FF(\Cl(E,\sigma),\ul{\sigma})$. There is an isomorphism $\Cl_0(V,Q)\to\Cl(E,\sigma)$ sending, for any $x,y\in V$, the element $x\cdot y\in\Cl_0(V,Q)$ to the image of the operator $z\mapsto xb_Q(y,z)$ under $\kappa$.

\begin{df}\label{df:tri_alg}
The multiplication $*$ of $V$ allows us to define an additional structure on $E$, namely, an isomorphism of $\LL$-algebras with involution:
\[
\alpha\colon\Cl(E,\sigma)\stackrel{\sim}{\to}\,{}^{\rho} E\times{}^{\rho^2}E,
\]
where the superscripts denote the twist of scalar multiplication (i.e., ${}^\rho E$ is $E$ as an $\FF$-algebra with involution, but with the new $\LL$-module structure defined by $\ell\cdot a=\rho(\ell)a$). This is done using the Clifford algebra $\Cl(V,Q)$ as follows. Identities \eqref{eq:cyclic_comp_id} imply that the mapping
\[
x\mapsto\begin{pmatrix}0 & l_x \\ r_x & 0\end{pmatrix}\in\End_{\LL}({}^\rho V\oplus{}^{\rho^2}V),\;x\in V,
\]
where $l_x(y)\bydef x*y\defby r_y(x)$, extends to an isomorphism of $\ZZ_2$-graded algebras with involution
\[
\alpha_V\colon(\Cl(V,Q),\tau)\stackrel{\sim}{\to}(\End_{\LL}({}^\rho V\oplus{}^{\rho^2}V),\tilde\sigma),
\]
where $\tau$ is the standard involution of the Clifford algebra and $\tilde\sigma$ is induced by the quadratic form $({}^\rho Q,{}^{\rho^2}Q)$ on ${}^\rho V\oplus{}^{\rho^2}V$, with ${}^\rho Q(x)\bydef\rho^{-1}(Q(x))$. Then $\alpha$ is obtained by restricting $\alpha_V$ to the even part $\Cl_0(V,Q)$ and identifying $\Cl(E,\sigma)$ with $\Cl_0(V,Q)$ as above. Explicitly,
\[
\alpha\colon \kappa\bigl(xb_Q(y,\cdot)\bigr)\mapsto(l_x r_y,r_x l_y),\;x,y\in V.
\]
\end{df}

The structure $(E,\LL,\rho,\sigma,\alpha)$ is an example of {\em trialitarian algebra} (with trivial discriminant) --- the general definition is given in \cite[\S 43.A]{KMRT} but we will not need it here. An {\em isomorphism} $(E,\LL,\rho,\sigma,\alpha)\to(E',\LL',\rho',\sigma',\alpha')$ is defined to be an isomorphism $\varphi\colon (E,\sigma)\to (E',\sigma')$ of $\FF$-algebras with involution such that the following diagram commutes:
\[
\xymatrix{
\Cl(E,\sigma)\ar[r]^\alpha\ar[d]_{\Cl(\varphi)} & {}^{\rho}E\times{}^{\rho^2}E\ar[d]^{\varphi\ot\Delta(\varphi)}\\
\Cl(E',\sigma')\ar[r]^{\alpha'} & {}^{\rho'}E'\times{}^{\rho'^2}E'
}
\]
where we have identified ${}^\rho E\times{}^{\rho^2}E=E\ot(\FF\times\FF)$ as $\FF$-algebras (similarly for $E'$) and $\Delta(\varphi)\in\Aut_\FF(\FF\times\FF)$ is the identity if the restriction $\varphi\colon\LL\to\LL'$ conjugates $\rho$ to $\rho'$ and the flip if it conjugates $\rho$ to the inverse of $\rho'$. Note that this definition does not depend on the identifications of $E$ and $E'$ with algebras of endomorphisms but depends on the choice of ``orientations'' $\rho$ and $\rho'$. Extending scalars to $\cR\in\Alg_\FF$, one defines the automorphism group scheme $\AAut_\FF(E,\LL,\sigma,\alpha)$, which does not depend on the choice of $\rho$. It can be shown (\cite[\S 44]{KMRT}) that, for $V=\cC\ot\LL$ with $\LL=\FF\times\FF\times\FF$ and $(\cC,\bullet,n)$ a para-Cayley algebra, the group scheme $\AAut_\FF(E,\LL,\sigma,\alpha)$ is isomorphic to $\PGOs^+(\cC,n)\rtimes\mathbf{S}_3$. Moreover, the natural morphism $\mathrm{Int}\colon\Gs\to\AAut_\FF(E)$, where $\Gs$ is the group scheme of invertible $\FF$-linear endomorphisms of $V$ that are semilinear over $(\LL,\rho)$ and $\mathrm{Int}_\cR(a)$ is the conjugation by $a\in\Gs(\cR)$ on $E_\cR=\End_{\LL\ot\cR}(V\ot\cR)$, $\cR\in\Alg_\FF$, yields a morphism of short exact sequences:
\[
\xymatrix{
\mathbf{1}\ar[r] & \AAut_\LL(V,\LL,\rho,*,Q)\ar[r]\ar[d] & \AAut_\FF(V,\LL,\rho,*,Q)\ar[r]\ar[d] & 
\AAut_\FF(\LL,\rho)\ar[r]\ar[d] & \mathbf{1}\\
\mathbf{1}\ar[r] & \AAut_\LL(E,\LL,\sigma,\alpha)\ar[r] & \AAut_\FF(E,\LL,\sigma,\alpha)\ar[r] & 
\AAut_\FF(\LL)\ar[r] & \mathbf{1}
}
\]
where the left vertical arrow is the quotient map $\Spins(\cC,n)\to\PGOs^+(\cC,n)$, the middle vertical arrow has image $\AAut_\FF(E,\LL,\rho,\sigma,\alpha)$ (i.e., the automorphisms that preserve $\rho$), and the right vertical arrow is the injection $\mathbf{A}_3\to\mathbf{S}_3$.

Our interest in the trialitarian algebra $\End_\LL(V)$ comes from the following fundamental connection with Lie algebras of type $D_4$ (see \cite[\S 45]{KMRT}). It turns out that the restriction $\frac12\kappa\colon\mathrm{Skew}(E,\sigma)\to\mathrm{Skew}(\Cl(E,\sigma),\ul{\sigma})$ is an injective homomorphism of Lie algebras over $\LL$, and the $\FF$-subspace
\[
\cL(E,\LL,\rho,\sigma,\alpha)\bydef\{x\in\mathrm{Skew}(E,\sigma)\;|\;\alpha(\kappa(x))=2(x,x)\}
\]
is precisely the Lie subalgebra $\tri(\cC,\bullet,n)=\Der_\LL(V,*,Q)$, which is isomorphic to $\So(\cC,n)$ by the local triality principle. (The equality $\cL(E,\LL,\rho,\sigma,\alpha)=\tri(\cC,\bullet,n)$ follows from the well-known fact that $\tri(\cC,\bullet,n)$ is spanned by the triples of the form $\bigl(xn(y,\cdot)-yn(x,\cdot),\frac12(r_xl_y-r_yl_x),\frac12(l_xr_y-l_yr_x)\bigr)$, $x,y\in\cC$, where $l_x(y)\bydef x\bullet y\defby r_y(x)$ --- see e.g. \cite[\S 5.5]{EKmon}, where this fact is used to prove the local triality principle.) Moreover, the restriction $\mathrm{Res}$ from $E$ to 
$\cL(E)\bydef \cL(E,\LL,\rho,\sigma,\alpha)$ allows us to recover the short exact sequence \eqref{eq:exact_D4} as follows:
\[
\xymatrix{
\mathbf{1}\ar[r] & \AAut_\LL(E,\LL,\sigma,\alpha)\ar[r]\ar[d]^[@!-90]{\sim} & 
\AAut_\FF(E,\LL,\sigma,\alpha)\ar[r]\ar[d]^[@!-90]{\sim} & \AAut_\FF(\LL)\ar[r]\ar@{=}[d] & \mathbf{1}\\
\mathbf{1}\ar[r] & \AAut_\FF(\cL(E))_0\ar[r] & \AAut_\FF(\cL(E))\ar[r] & 
\AAut_\FF(\LL)\ar[r] & \mathbf{1}\\
}
\]
where the subscript $0$ denotes the connected component of identity. Note that the composition $\mathrm{Res}\circ\mathrm{Int}$ is the adjoint representation 
\[
\Ad\colon\AAut_\FF(V,\LL,\rho,*,Q)\to\AAut_\FF(\Der(V,*,Q)). 
\]
Since there is only one trialitarian algebra (up to isomorphism) over an algebraically closed field, it follows that, over any field, each Lie algebra of type $D_4$ can be realized as $\cL(E)$ for a unique trialitarian algebra (though not necessarily of the sort discussed here). And what is important for our purpose, any $G$-grading on $\cL(E)$ is the restriction of a unique $G$-grading on $E$, so the classification of gradings is the same for $\cL(E)$ as for $E$.

%-------------------------------------------------------------------------------------------------------------------------------------
\section{Type I gradings and their Brauer invariants}\label{s:Type_I}

Let $\FF$ be an algebraically closed field, $\chr\FF\ne 2$. Take $\LL=\FF\times\FF\times\FF$ and $\rho\in\Aut_\FF(\LL)$ defined by $\rho(\lambda_1,\lambda_2,\lambda_3)=(\lambda_2,\lambda_3,\lambda_1)$.

Let $(V,\LL,\rho,*,Q)$ be a cyclic composition algebra of rank $8$ and let $E=\End_\LL(V)$ be the corresponding trialitarian algebra. Suppose we have a grading $\Gamma$ on $(E,\LL,\rho,$ $\sigma,\alpha)$ by an abelian group $G$ (which we may assume finitely generated), i.e., a grading on the $\FF$-algebra $E$ such that $\sigma$ and $\alpha$ preserve the degree. 

In this section we assume that $\Gamma$ is of Type I, i.e, its restriction to the center $\LL=Z(E)$ is trivial. Then the decomposition $\LL=\FF\times\FF\times\FF$ yields the $G$-graded decomposition $E=E_1\times E_2\times E_3$, where each $E_i$ is isomorphic to $M_8(\FF)$ and equipped with an orthogonal involution $\sigma_i$ (the restriction of $\sigma$). Denote the $G$-grading on $E_i$ by $\Gamma_i$, $i=1,2,3$. 

%-------------------------------------------
\subsection{Related triples of gradings on $M_8(\FF)$ with orthogonal involution}

Write $V=\cC\otimes\LL$ where $(\cC,\bullet,n)$ is the para-Cayley algebra. Then we can identify each $(E_i,\sigma_i)$ with $\End_\FF(\cC)$, where the 
involution is induced by the norm $n$. Thus, we can view all $\Gamma_i$ as gradings on the same algebra $\End_\FF(\cC)\cong M_8(\FF)$. We will say that $(\Gamma_1,\Gamma_2,\Gamma_3)$ is the {\em related triple} associated to $\Gamma$. 

Recall that the $G$-grading $\Gamma$ is equivalent to a morphism $G^D\to\AAut_\LL(E,\LL,\sigma,\alpha)$, where $G^D$ is the Cartier dual of $G$. The triality principle implies that each restriction morphism $\pi_i\colon\AAut_\LL(E,\LL,\sigma,\alpha)\to\AAut_\FF(E_i,\sigma_i)$ is a closed imbedding whose image is the connected component of $\AAut_\FF(E_i,\sigma_i)$ (isomorphic to $\PGOs_8^+$). Hence each of the gradings $\Gamma_i=\pi_i(\Gamma)$ uniquely determines $\Gamma$.
Moreover, there exists $\Gamma$ such that $\Gamma_1$ is any given ``inner'' grading on the algebra with involution $\End_\FF(\cC)$. 

The outer action of $S_3$ on $\TRIs(\cC,\bullet,n)$ and on its quotient $\AAut_\LL(E,\LL,\sigma,\alpha)$ yields the following action 
on related triples: $A_3$ permutes the components of $(\Gamma_1,\Gamma_2,\Gamma_3)$ cyclically and the transposition $(2,3)$ sends 
$(\Gamma_1,\Gamma_2,\Gamma_3)$ to $(\wb{\Gamma}_1,\wb{\Gamma}_3,\wb{\Gamma}_2)$ where $\wb{\Gamma}_i$ denotes the image of $\Gamma_i$ under the inner automorphism of $\End_\FF(\cC)$ corresponding to 
the standard involution of $\cC$ (which is an improper isometry of $n$).

The classification up to isomorphism of $G$-gradings on the algebra $M_8(\FF)$ with orthogonal involution is known (see e.g. \cite[Theorem 2.64]{EKmon}). It can also be determined explicitly which of these gradings are ``inner'' (see Remark \ref{rem:inner}). Let us see to what extent this allows us to classify Type I gradings on the trialitarian algebra $E$ or, equivalently, on the Lie algebra $\cL(E)$. For two ``inner'' gradings $\Gamma'$ and $\Gamma''$, we will write $\Gamma'\sim\Gamma''$ if there exists an element of $\PGO_8^+(\FF)$ sending $\Gamma'$ to $\Gamma''$. Thus, $\Gamma'$ and $\Gamma''$ are isomorphic if and only if $\Gamma'\sim\Gamma''$ or $\wb{\Gamma'}\sim\Gamma''$.

The stabilizer of a given Type I grading $\Gamma$ under the outer action of $S_3$ can have size $1$, $2$, $3$ or $6$. 
Therefore, we have the following three possibilities (the last one corresponding to sizes $3$ and $6$):
\begin{itemize}
\item If $\Gamma_i\nsim\Gamma_j$ for $i\ne j$ and $\Gamma_i\nsim\wb{\Gamma}_j$ for all $i,j$ then the isomorphism class of $\Gamma$ corresponds to $3$ distinct isomorphism classes of ``inner'' gradings (one for each of $\Gamma_i$);
\item If $\Gamma_i\sim\wb{\Gamma}_i$ for some $i$ and $\Gamma_i\nsim\Gamma_j\sim\wb{\Gamma}_k$, where $\{i,j,k\}=\{1,2,3\}$, then the isomorphism class of $\Gamma$ corresponds to $2$ distinct isomorphism classes of ``inner'' gradings (one for $\Gamma_i$ and one for $\Gamma_j$);
\item If $\Gamma_1\sim\Gamma_2\sim\Gamma_3$ then the isomorphism class of $\Gamma$ corresponds to $1$ isomorphism class of ``inner'' gradings.  
\end{itemize}
Given $\Gamma_1$, one can --- in principle --- compute $\Gamma_2$ and $\Gamma_3$, but obtaining explicit formulas seems to be difficult.

%-------------------------------------------
\subsection{Relation among the Brauer invariants in a related triple}

Recall that the projections $E\to E_i$ define the three irreducible representations of dimension $8$ for the Lie algebra $\cL=\cL(E)$ of type $D_4$: the natural and the two half-spin representations. As before, suppose we are given a Type I grading $\Gamma$ on $E$ and consider its related triple $(\Gamma_1,\Gamma_2,\Gamma_3)$. Since the image of $\cL$ generates each $E_i$, the gradings $\Gamma_i$ on $E_i$ are the unique $G$-gradings such that the three representations $\cL\to E_i$ are  homomorphisms of graded Lie algebras. 

For each $i$, we can write $E_i=\End_{\cD_i}(W_i)$ where $\cD_i$ is a graded division algebra and $W_i$ is a graded right vector space over $\cD_i$ (\cite[Theorem 2.6]{EKmon}). We are interested in the Brauer classes $[E_i]$, i.e., the isomorphism classes of $\cD_i$, $i=1,2,3$.

Since the even Clifford algebra corresponding to the space of the natural representation of $\cL$ can be defined purely in terms of the corresponding algebra with involution $E_i$ (say, $i=1$) and hence inherits the $G$-grading regardless of the characteristic of $\FF$, we can derive relations among the $G$-graded Brauer classes $[E_i]$ using the same arguments as in \cite[Proposition 39]{EK14}: 
\begin{equation}\label{eq:Brauer_relations}
[E_i]^2=1\text{ and }[E_1]=[E_2][E_3]. 
\end{equation}
Alternatively, extending the proof of \cite[Theorem 9.12]{KMRT} to the graded setting, one arrives at the same relations (see also Proposition 42.7 therein).

%-------------------------------------------
\section{Lifting Type III gradings from $\End_\LL(V)$ to $V$}\label{s:lifting}

Let $\FF$ be an algebraically closed field, $\chr\FF\ne 2,3$, and $\LL=\FF\times\FF\times\FF$. Define $\rho\in\Aut_\FF(\LL)$ by $\rho(\lambda_1,\lambda_2,\lambda_3)=(\lambda_2,\lambda_3,\lambda_1)$ and fix a primitive cube root of unity $\omega\in\FF$. The element $\xi=(1,\omega,\omega^2)$ spans the $\omega$-eigenspace for $\rho$ in $\LL$ and satisfies $N(\xi)=1$.

Let $(V,\LL,\rho,*,Q)$ be a cyclic composition algebra of rank $8$ and let $E=\End_\LL(V)$ be the corresponding trialitarian algebra. Suppose we have a Type III grading $\Gamma$ on $(E,\LL,\rho,$ $\sigma,\alpha)$ by an abelian group $G$, i.e, the projection of the image of $\eta=\eta_\Gamma\colon G^D\to\AAut_\FF(E,\LL,\sigma,\alpha)$ in $\AAut_\FF(\LL)$ is $\mathbf{A}_3$. This is equivalent to saying that the $G$-grading on $\LL$ obtained by restricting $\Gamma$ to the center $\LL=Z(E)$ has 1-dimensional homogeneous components: $\LL=\FF\oplus\FF\xi\oplus\FF\xi^2$, so $\LL$ becomes a graded field. Let $h\in G$ be the degree of $\xi$. This is an element of order $3$, which we will call the {\em distinguished element} of the grading. Note that the subgroup $H\bydef\langle h\rangle$ is precisely the distinguished subgroup, i.e., the subgroup of $G$ corresponding to the inverse image $\eta^{-1}(\eta(G^D)\cap\AAut_\LL(E,\LL,\sigma,\alpha))$ in $G^D$.

The purpose of this section is to show that $\eta$ can be ``lifted'' from $E$ to $V$ in the sense that there exists a morphism $\eta'$ such that the following diagram commutes:

\begin{equation}
\begin{aligned}\label{eq:eta_diag}
\xymatrix{
& \AAut_\FF(V,\LL,\rho,*,Q)\ar[dd]^{\mathrm{Int}}\\
G^D\ar@{-->}[ru]^-{\eta'}\ar[rd]^-{\eta} &\\
& \AAut_\FF(E,\LL,\sigma,\alpha)
}
\end{aligned}
\end{equation}
or, in other words, there exists a $G$-grading $\Gamma'$ on $(V,\LL,\rho,*,Q)$, i.e., a grading on the $\FF$-algebra $(V,*)$ making it a graded $\LL$-module with $b_Q$ a degree-preserving map, such that the grading on $E=\End_\LL(V)$ induced by $\Gamma'$ is precisely $\Gamma$.

We will proceed in steps. First we will forget about all extra structure and will treat $E$ simply as a central algebra over $\LL$, so $V$ will be just a graded vector space over $\LL$. Then we will include the involution $\sigma$ and the bilinear form $Q$ in the picture, and finally will take care of $\alpha$ and $*$.

%-------------------------------------------
\subsection{Triviality of the graded Brauer invariants}\label{sse:lifting1}

Since $E$ is graded simple, we can write $E=\End_\cD(W)$ as a $G$-graded algebra, where $\cD$ is a graded division algebra over $\FF$ and $W$ is a graded right vector space over $\cD$ (\cite[Theorem 2.6]{EKmon}). Clearly, $\LL$ is the center of $\cD$. We are going to show that $\cD=\LL$. 

Let $\wb{G}=G/H$ and consider the $\wb{G}$-grading on $E$ induced by the quotient map $G\to\wb{G}$. Since it is a Type I grading, we can decompose  
$E=E_1\times E_2\times E_3$ as a $\wb{G}$-graded algebra. We also have $\cD=\cD_1\times\cD_2\times\cD_3$, $W=W_1\times W_2\times W_3$ and $E_i=\End_{\cD_i}(W_i)$. 
Since the $E_i$ are simple algebras, so are the $\cD_i$. Now Proposition \ref{prop:beta_bar} tells us that the $\wb{G}$-graded algebras $\cD_1$, $\cD_2$ and $\cD_3$ are isomorphic graded division algebras, hence $[E_1]=[E_2]=[E_3]$ in the $\wb{G}$-graded Brauer group.
On the other hand, we have relations \eqref{eq:Brauer_relations}. This forces $[E_i]=1$ for all $i$. In other words, $\wb{T}$ and $\bar{\beta}$ are trivial, 
which means that $T=H$ and $\beta=1$. We have proved $\cD=\LL$.

%-------------------------------------------
\subsection{Construction of $\eta'$}

Since the graded division algebra corresponding to the $G$-graded algebra $E$ is the graded field $\LL$, we can give $V$ a $G$-grading $\Gamma'$ making it into a graded vector space over $\LL$ such that the grading induced on $E=\End_\LL(V)$ is precisely the given grading $\Gamma$. But so far we ignored the quadratic form $Q$ and the multiplication $*$ on $V$. Taking $Q$ into account is easy: by replacing $Q$ with $\mu Q$ for a suitable $\mu\in\LL^\times$, we can make $b_Q\colon V\times V\to\LL$ a homogeneous map of some degree  and the possible choices of $\mu$ differ by a homogeneous factor in $\LL^\times$ (\cite[Theorem 2.57]{EKmon}). Since $\FF$ is algebraically closed, we can find $\lambda\in\LL^\times$ such that $\lambda^\#=\mu$. Replacing $*$ with $\lambda*$ and $Q$ with $\mu Q$, we obtain a structure of cyclic composition algebra on $V$ (similar to the original one) that has homogeneous $b_Q$ and induces the original $\sigma$ and $\alpha$ on $E$. We will now show that, by a suitable shift of grading $\Gamma'$, we can make $*$ and $Q$ degree-preserving (i.e., homogeneous maps of degree $e$).

Recall the morphism $\mathrm{Int}\colon\Gs\to\AAut_\FF(E)$, where $\Gs$ is the group scheme of invertible $\FF$-linear endomorphisms of $V$ that are semilinear over $(\LL,\rho)$. The grading $\Gamma'$ on $V$ is equivalent to a morphism $\eta'\colon G^D\to\Gs$ such that $\mathrm{Int}\circ\eta'=\eta$. Since $\eta(G^D)$ is contained in $\mathbf{H}\bydef\AAut_\FF(E,\LL,\rho,\sigma,\alpha)$, we conclude that $\eta'(G^D)$ is contained in $\Gs_0\bydef\mathrm{Int}^{-1}(\mathbf{H})$. Since $\mathrm{Int}$ is a separable morphism (i.e., the kernel is smooth), $\Gs_0$ is smooth as the inverse image of a smooth subgroupscheme. On the other hand, $\mathrm{Int}(\SIMs(V,\LL,\rho,*,Q))$ is contained in $\mathbf{H}$, hence $\SIMs(V,\LL,\rho,*,Q)$ is contained in $\Gs_0$. Since the $\FF$-points of these two group schemes coincide, the schemes have the same dimension, hence $\SIMs(V,\LL,\rho,*,Q)$ is also smooth and coincides with $\Gs_0$. We have shown that $\eta'(G^D)$ is contained in $\SIMs(V,\LL,\rho,*,Q)$. This means that $\eta'(G^D)$ stabilizes the $\LL$-submodules spanned by each of $b_Q$ and $*$.  

Now recall the morphisms $\theta'$ and $\theta$ of $\SIMs(V,\LL,\rho,*,Q)$ to $\GLs_1(\LL)\rtimes\mathbf{A}_3$ associated to the actions of this group scheme on the $\LL$-submodules $\LL b_Q$ and $\LL*$, respectively. We know that $b_Q$ is a homogeneous element in the space of bilinear forms (with respect to the grading induced by $\Gamma'$), hence $\eta'(G^D)$ stabilizes the $\FF$-subspace spanned by $b_Q$. This means that, for any $\cR\in\Alg_\FF$, the homomorphism $\theta'_\cR$ sends every element of the group $\eta'(G^D)(\cR)$ to an element of the form $(\mu,\varphi_0)\in(\LL_\cR)^\times\rtimes\Aut_\cR(\LL_\cR,\rho)$, where $\mu$ actually belongs to $\cR^\times$. Let $(\lambda,\varphi_0)$ be the image of the same element of $\eta'(G^D)(\cR)$ under $\theta_\cR$. In view of Lemma \ref{lm:lambda_mu}, we have $\mu=\lambda^\#$. But then $N(\lambda)\lambda=\mu^\#\in\cR^\times$, hence also $\lambda\in\cR^\times$. We have shown that $\eta'(G^D)$ stabilizes the $\FF$-subspace spanned by $*$. In terms of the grading $\Gamma'$, this means that $*$ is a homogeneous element of some degree, say, $g_0$, i.e., we have $x*y\in V_{g_0ab}$ for all $x\in V_a$ and $y\in V_b$ ($a,b\in G$). Shifting the grading $\Gamma'$ by $g_0$, we obtain $V_a*V_b\subset V_{ab}$, i.e., the new $\eta'$ sends $G^D$ to the stabilizer of $*$. Applying Lemma \ref{lm:lambda_mu} again, we see that $\eta'(G^D)$ stabilizes $b_Q$ as well. (Alternatively, one may invoke identities \eqref{eq:cyclic_comp_id} relating $*$ and $Q$.) We have constructed $\eta'$ that fits diagram \eqref{eq:eta_diag} for the cyclic composition algebra similar to the original one.

%-------------------------------------------
\subsection{Reduction of the classification of Type III gradings from trialitarian algebras to cyclic composition algebras}

First we summarize the result of the previous subsection:

\begin{theorem}\label{th:tri_to_comp}
Let $(E,\LL,\rho,\sigma,\alpha)$ be a trialitarian algebra over an algebraically closed field $\FF$, $\chr{\FF}\ne 2,3$. Suppose $E$ is given a Type III grading by an abelian group $G$. Then there exists a cyclic composition algebra $(V,\LL,\rho,*,Q)$ with a $G$-grading such that $E$ is isomorphic to $\End_\LL(V)$ as a $G$-graded trialitarian algebra.\qed  
\end{theorem}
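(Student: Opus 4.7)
The plan is to carry out the lifting in three stages, gradually putting the structures back on $V$. The given grading corresponds to a morphism $\eta\colon G^D\to\AAut_\FF(E,\LL,\sigma,\alpha)$, and I want to produce a morphism $\eta'\colon G^D\to\AAut_\FF(V,\LL,\rho,*,Q)$ (after possibly replacing $(*,Q)$ by a similar pair) making diagram \eqref{eq:eta_diag} commute. By the correspondence between gradings and such morphisms, this is equivalent to producing the desired $G$-grading on $V$.

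\textbf{Step 1: trivialise the graded Brauer invariant.} Forgetting $\sigma$ and $\alpha$ and viewing $E$ only as a $G$-graded central $\LL$-algebra, the graded version of Artin--Wedderburn gives $E\cong\End_\cD(W)$ as $G$-graded algebras, for some graded division algebra $\cD$ with center $\LL$ and some graded right $\cD$-module $W$. To obtain $\cD=\LL$ I would pass to the coarsening by $\wb G=G/H$, which is of Type I and so splits $E$ into $E_1\times E_2\times E_3$ with $\cD=\cD_1\times\cD_2\times\cD_3$ and $E_i=\End_{\cD_i}(W_i)$. Proposition~\ref{prop:beta_bar} says that all $\cD_i$ are isomorphic as $\wb G$-graded algebras, hence $[E_1]=[E_2]=[E_3]$ in the $\wb G$-graded Brauer group; combining this with the relations $[E_i]^2=1$ and $[E_1]=[E_2][E_3]$ from \eqref{eq:Brauer_relations} forces $[E_i]=1$ for all $i$. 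Equivalently, the support of $\cD$ collapses to $H$ and the commutation factor is trivial, so $\cD=\LL$ and $V\bydef W$ is a graded $\LL$-module with $E=\End_\LL(V)$ as $G$-graded algebras.

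\textbf{Step 2: adjust $Q$ and $*$ to a similar cyclic composition structure with $b_Q$ homogeneous.} The $G$-grading on $V$ is only pinned down up to shifts of the three $\LL$-components, and $(*,Q)$ is only determined up to a similitude with parameter $\lambda\in\LL^\times$ and multiplier $\lambda^\#$. By \cite[Theorem 2.57]{EKmon} I can replace $Q$ by $\mu Q$ for a suitable $\mu\in\LL^\times$ to make $b_Q$ a homogeneous bilinear form; since $\FF$ is algebraically closed, the norm and adjoint equations let me pick $\lambda\in\LL^\times$ with $\lambda^\#=\mu$, and replacing $*$ by $\lambda*$ gives a cyclic composition algebra similar to the original one and inducing the same $\sigma$ and $\alpha$ on $E$.

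\textbf{Step 3: move $*$ into the identity degree via a single shift.} The grading on $V$ corresponds to $\eta'\colon G^D\to\Gs$ with $\mathrm{Int}\circ\eta'=\eta$, where $\Gs$ is the scheme of $\rho$-semilinear invertible endomorphisms of $V$. Since $\eta(G^D)\subset\mathbf{H}\bydef\AAut_\FF(E,\LL,\rho,\sigma,\alpha)$, we have $\eta'(G^D)\subset\Gs_0\bydef\mathrm{Int}^{-1}(\mathbf H)$; smoothness of $\mathrm{Int}$ together with a dimension count on $\FF$-points identifies $\Gs_0$ with $\SIMs_\FF(V,\LL,\rho,*,Q)$, so $\eta'(G^D)$ stabilises both $\LL b_Q$ and $\LL*$. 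Because $b_Q$ is already homogeneous, the image of $\eta'$ under $\theta'$ is constant along the $\LL$-direction; Lemma~\ref{lm:lambda_mu} transports this to $\theta$, showing that $*$ is homogeneous of some single degree $g_0\in G$. Shifting the grading on $V$ by $g_0$ makes $*$ degree-preserving, and the identities \eqref{eq:cyclic_comp_id} (or another application of Lemma~\ref{lm:lambda_mu}) then give $Q$ degree-preserving as well. This yields the required $G$-graded cyclic composition algebra, and tracing through the construction the induced trialitarian structure on $\End_\LL(V)$ is isomorphic to $E$ as a $G$-graded trialitarian algebra.

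The hard part is Step~1: everything afterwards is essentially a book-keeping argument with the tools already assembled, whereas killing the $\wb G$-graded Brauer class depends on the delicate combination of the semisimplicity/Proposition~\ref{prop:beta_bar} (which equates the three classes) with the triality-driven relations in \eqref{eq:Brauer_relations} (which then force each class to be trivial).
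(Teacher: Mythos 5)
Your proposal is correct and follows essentially the same route as the paper's proof: Step~1 reproduces the Brauer-class trivialization via passage to the $\wb G$-coarsening, Proposition~\ref{prop:beta_bar}, and the relations \eqref{eq:Brauer_relations}; Steps~2--3 reproduce the adjustment of $(Q,*)$ by a similitude, the identification $\Gs_0=\SIMs_\FF(V,\LL,\rho,*,Q)$ via smoothness of $\mathrm{Int}$ and a dimension count, and the use of Lemma~\ref{lm:lambda_mu} together with a shift of grading to make $*$ and $Q$ degree-preserving. The only difference is cosmetic (you set $V\bydef W$, whereas the paper transports the grading on $W$ to the already-present $V$), so there is nothing substantive to compare.
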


Since we want to classify $G$-gradings up to isomorphism, there still remains the question of uniqueness of $V$ in the above theorem. To state the answer, it is convenient to introduce some terminology. For a cyclic composition algebra $(V,\LL,\rho,*,Q)$ over $(\LL,\rho)$, consider the same $\LL$-module $V$ with the same quadratic form $Q$ but with the new multiplication $x*^\mathrm{op}y\bydef y*x$. This is a cyclic composition algebra over $(\LL,\rho^2)$, called the {\em opposite} of $V$ and denoted $V^\mathrm{op}$. We will say that an isomorphism $(\varphi_1,\varphi_0)\colon(V,\LL,\rho)\to(V',\LL',\rho')$ and an algebra isomorphism $\varphi\colon\End_\LL(V)\to\End_{\LL'}(V')$ are {\em compatible} if $\varphi_1(ax)=\varphi(a)\varphi_1(x)$ for all $a\in\End_\LL(V)$ and $x\in V$. The next result does not require algebraic closure.

\begin{theorem}\label{th:tri_isomorphism}
Let $(V,\LL,\rho,*,Q)$ and $(V',\LL',\rho',*',Q')$ be two cyclic composition algebras of rank $8$ where $\LL=\FF\times\FF\times\FF$ such that $\chr{\FF}\ne 2,3$ and $\FF$ contains a primitive cube root of unity. Suppose $V$ and $V'$ are given Type III gradings by an abelian group $G$. Let $E=\End_\LL(V)$ and $E'=\End_{\LL'}(V')$ be the corresponding trialitarian algebras with induced $G$-gradings. Then, for any isomorphism $(\varphi_1,\varphi_0)$ of graded cyclic composition algebras from either $V$ or $V^\mathrm{op}$ to $V'$, there exists a unique compatible isomorphism  $\varphi\colon E\to E'$ of graded trialitarian algebras. Conversely, for any isomorphism $\varphi\colon E\to E'$ of graded trialitarian algebras, there exists a unique compatible isomorphism of graded cyclic composition algebras $(\varphi_1,\varphi_0)$ from either $V$ or $V^\mathrm{op}$ (but not both) to $V'$. 
\end{theorem}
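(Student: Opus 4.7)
The plan is to handle the two implications separately; the forward direction is essentially a verification, while the converse relies on the Skolem--Noether/lifting machinery already developed in Section~\ref{s:lifting}. For the forward direction, given a graded isomorphism $(\varphi_1,\varphi_0)$ from $V$ or $V^{\mathrm{op}}$ to $V'$, I would define $\varphi\colon E\to E'$ by $\varphi(a)=\varphi_1\circ a\circ\varphi_1^{-1}$. The compatibility relation $\varphi_1(ax)=\varphi(a)\varphi_1(x)$ is then automatic and simultaneously forces uniqueness of $\varphi$. All verifications are routine: $\FF$-linearity and $\varphi_0$-semilinearity over $\LL$ follow from the corresponding properties of $\varphi_1$; preservation of $\sigma$ follows because $\varphi_1$ is a similitude for $b_Q$ and therefore intertwines the adjoint involutions; and gradedness is inherited from $\varphi_1$. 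The trialitarian diagram commutes with $\Delta(\varphi)=\id$ in the $V\to V'$ case and with the flip in the $V^{\mathrm{op}}\to V'$ case, which one verifies from the explicit formula $\alpha\colon\kappa(xb_Q(y,\cdot))\mapsto(l_x r_y,\, r_x l_y)$ together with the identities $\varphi_1 l_x \varphi_1^{-1}=l_{\varphi_1(x)}$ and $\varphi_1 r_x \varphi_1^{-1}=r_{\varphi_1(x)}$ in the first case, while $\varphi_1 l_x \varphi_1^{-1}=r_{\varphi_1(x)}$ and $\varphi_1 r_x \varphi_1^{-1}=l_{\varphi_1(x)}$ in the second.

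For the converse direction, I would first set $\varphi_0=\varphi|_\LL$, which is forced since $\LL$ is the center. The trialitarian compatibility diagram requires $\Delta(\varphi)\in\{\id,\mathrm{flip}\}$; these two cases are mutually exclusive and correspond respectively to $\varphi_0\rho=\rho'\varphi_0$ (lift from $V$) and $\varphi_0\rho=\rho'^{-1}\varphi_0$ (lift from $V^{\mathrm{op}}$). I would then construct $\varphi_1$ following the construction of the morphism $\eta'$ in Section~\ref{s:lifting}: view $V'$ as an $E$-module via $\varphi$, decompose via the three primitive idempotents of $\LL$, and apply Skolem--Noether factor by factor to the three central simple components $E_i\cong M_8(\FF)$. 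This produces an $E$-module isomorphism $\varphi_1\colon V\to V'$, automatically $\varphi_0$-semilinear and satisfying the compatibility relation, and unique up to the scalar freedom $\End_E(V)^\times=\LL^\times$.

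To complete the argument, I would use this $\LL^\times$-freedom to make $\varphi_1$ simultaneously an isomorphism (not merely a similitude) of cyclic composition algebras and strictly graded, repeating the scaling analysis of Section~\ref{s:lifting}: the preservation of $\sigma$ and $\alpha$ by $\varphi$ makes $\varphi_1$ a similitude with parameter $\lambda$ and multiplier $\mu=\lambda^\#$ by Lemma~\ref{lm:lambda_mu}, and the algebraic closure of $\FF$ provides the required square roots. For uniqueness, any two valid lifts differ by multiplication by some $c\in\LL^\times$ that must be both homogeneous of degree $e$ in the Type~III grading on $\LL$ (hence in the $\rho$-fixed diagonal $\FF^\times\cdot 1\subset\LL^\times$) and satisfy $c=c^\#$ (so that the multiplication $*$ is preserved, as follows from $(cx)*'(cy)=c^\#(x*'y)$). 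Writing $c=(c_0,c_0,c_0)$ gives $c^\#=c^2$, so $c^\#=c$ forces $c_0=1$ and thus $c=1$. The main obstacle will be the simultaneous rescaling step: one must verify that the character $G^D\to\LL^\times$ measuring the failure of $\varphi_1$ to intertwine the two grading morphisms can be killed by the available $\LL^\times$-freedom, which should follow from the same kind of analysis of stabilizers of $b_Q$ and $*$ carried out in Section~\ref{s:lifting}.
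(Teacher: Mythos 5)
Your forward direction is correct and essentially the same as the paper's: the compatibility condition forces $\varphi(a)=\varphi_1\circ a\circ\varphi_1^{-1}$, and the verification that this respects $\sigma$, $\alpha$ and the grading reduces to the $l_x$, $r_x$ intertwining identities you spell out; this is what the paper leaves as ``straightforward.'' Your uniqueness argument at the end (the ambiguity $c\in\LL^\times$ must lie in $\LL_e^\times=\FF^\times 1$ to be degree-preserving and satisfy $c=c^\#$ to preserve $*$, forcing $c=1$) is also correct and in fact a cleaner account than the paper's one-line remark.

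The genuine gap is in the converse, at the step ``\ldots and the algebraic closure of $\FF$ provides the required square roots.'' The hypotheses of the theorem are only that $\chr\FF\ne2,3$ and that $\FF$ contains a primitive cube root of unity; the paper explicitly flags just before the statement that this result does \emph{not} require algebraic closure. So you cannot extract the square roots needed to solve $c^{-1}c^{\#}=\lambda^{-1}$ for a general similitude parameter $\lambda\in\LL^\times$ (writing $c=(c_1,c_2,c_3)$, one has to solve $c_i^2=\lambda_i/N(\lambda)$, and these need not have solutions in $\FF$). The paper's proof avoids this by exploiting the graded structure more carefully: starting from the ungraded similitude $\hat\varphi_1$ furnished by [KMRT, Prop.\ 44.16 and 44.2] (which is essentially the Skolem--Noether argument you sketch), it invokes the graded module classification [EKmon, Thm.\ 2.10] to get a $\varphi_0$-semilinear isomorphism $\varphi_1\colon V^{[u]}\to V'$ of graded $\LL$-modules for some shift $u\in G$, observes that $\varphi_1=\hat\varphi_1\circ m_\ell$ for some $\ell\in\LL^\times$ so that $\varphi_1$ is a similitude with parameter $\lambda$, and then shows $\varphi_0(\lambda)$ is homogeneous of degree $u$. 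Composing with $m_{\lambda^{-1}}$ kills the shift and produces a similitude whose parameter $\lambda_0=\lambda^2(\lambda^\#)^{-1}$ is homogeneous of degree $e$, i.e.\ $\lambda_0\in\FF^\times 1$. For scalars the equation $c^{-1}c^\#=\lambda_0^{-1}$ is solved by $c=\lambda_0^{-1}$ with no roots extracted. Your closing sentence gestures at this rescaling obstruction but neither performs the homogeneity analysis that makes it disappear nor explains why the $\LL^\times$-freedom suffices, and the appeal to algebraic closure is not a substitute since that hypothesis is absent.

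A secondary, smaller point: after obtaining the Skolem--Noether isomorphism of $E$-modules, you assert that Lemma \ref{lm:lambda_mu} makes it a similitude. That lemma is stated for elements of $\SIMs_\FF(V,\LL,\rho,*,Q)$ acting on a single $V$; to apply the idea here one must first argue, as above, that a compatible $\varphi_0$-semilinear $E$-module isomorphism is automatically a similitude of cyclic composition algebras (this is what [KMRT, Prop.\ 44.2] gives). Your sketch is plausible but the dependence on that step, and then on the homogeneous-degree bookkeeping, is exactly where the paper's proof does real work that your proposal elides.
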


\begin{proof}
Given $(\varphi_1,\varphi_0)$, the only mapping $\varphi\colon E\to E'$ that will satisfy the compatibility condition is the one defined by $\varphi(a)x'=\varphi_1(a\varphi_1^{-1}(x'))$, for all $a\in E$ and $x'\in V'$, and it is an isomorphism of algebras. Since $\sigma$ and $\alpha$ are defined in terms of $Q$ and $*$, it is straightforward to verify that $\varphi$ is actually an isomorphism of trialitarian algebras. The definition of induced grading on the algebra of endomorphisms of a graded module implies that $\varphi$ preserves the degree.

Conversely, suppose $\varphi$ is given. Let $\varphi_0\colon\LL\to\LL'$ be the restriction of $\varphi$. (Since $V'$ is a faithful $\LL'$-module, this is the only possibility to satisfy the compatibility condition.) Then either $\varphi_0\colon(\LL,\rho)\to(\LL',\rho')$ or $\varphi_0\colon(\LL,\rho^2)\to(\LL',\rho')$. The second possibility reduces to the first if we replace $V$ with $V^\mathrm{op}$, so assume $\varphi_0\colon(\LL,\rho)\to(\LL',\rho')$. By \cite[Proposition 44.16]{KMRT}, there exists a similitude of (ungraded) cyclic composition algebras $(\tilde{\varphi}_1,\tilde{\varphi}_0)\colon(V,\LL,\rho,*,Q)\to(V',\LL',\rho',*',Q')$. Let $\tilde{\varphi}$ be the corresponding isomorphism $E\to E'$. Then $\tilde{\varphi}^{-1}\varphi$ is an automorphism of $(E,\LL,\rho,\sigma,\alpha)$, and it follows from \cite[Proposition 44.2]{KMRT} that the group $\Aut_\FF(E,\LL,\rho,\sigma,\alpha)$ is the image of the homomorphism $\mathrm{Int}_\FF\colon\SIM_\FF(V,\LL,\rho,*,Q)\to\Aut_\FF(E,\LL,\sigma,\alpha)$. Therefore, we can find $(\psi_1,\psi_0)\in\SIM_\FF(V,\LL,\rho,*,Q)$ that is sent to $\tilde{\varphi}^{-1}\varphi$, in particular $\psi_0=\tilde{\varphi}_0^{-1}\varphi_0$. Set $\hat{\varphi}_1\bydef\tilde{\varphi}_1\psi_1$. Then $(\hat{\varphi}_1,\varphi_0)$ will satisfy the compatibility condition with $\varphi$. 

It remains to take care of the gradings. By \cite[Theorem 2.10]{EKmon}, there exists $u\in G$ and $\varphi_0$-semilinear isomorphism $\varphi_1\colon V^{[u]}\to V'$ of graded spaces over $\LL$ such that $(\varphi_1,\varphi_0)$ is compatible with $\varphi$. Here $V^{[u]}$ denotes a shift of grading, i.e., the new grading $V=\bigoplus_{g\in G}\tilde{V}_g$ where $\tilde{V}_{gu}=V_g$ for all $g\in G$. Now observe that $\hat{\varphi}_1^{-1}\varphi_1$ is an endomorphism of $V$ as an $E$-module, hence it is the multiplication by an element $\ell\in\LL^\times$, i.e., $\varphi_1(x)=\hat{\varphi}_1(\ell x)$ for all $x\in V$. Since $\hat{\varphi}_1$ is a similitude, so is $\varphi_1$. If $\hat{\lambda}\in\LL^\times$ is the parameter of $\hat{\varphi}_1$ then the parameter of $\varphi_1$ is $\lambda=\hat{\lambda}\ell^{-1}\ell^\#$. On the other hand, we have $\varphi_1(V_g)=V'_{gu}$ for all $g\in G$. Pick $s,t\in G$ with $V_s*V_t\ne 0$ and pick $x\in V_s$, $y\in V_t$ such that $z\bydef x*y\ne 0$. Then $z'\bydef\varphi_1(z)$ is a nonzero element of $V'_{stu}$. At the same time, we have $\varphi_0(\lambda)z'=\varphi_1(\lambda(x*y))=\varphi_1(x)*'\varphi(y)\in V'_{stu^2}$. It follows that $\varphi_0(\lambda)$ is a homogeneous element of degree $u$. Replacing $\varphi_1$ by the map $x\mapsto\varphi_1(\lambda^{-1}x)$, which is a similitude with parameter $\lambda_0\bydef\lambda^2(\lambda^\#)^{-1}\in\LL_e^\times=\FF^\times$, we obtain $\varphi_1(V_g)=V'_g$ for all $g\in G$. Finally, since $\lambda_0\in\FF^\times$, the mapping $x\mapsto\lambda_0 x$ is a similitude $V\to V$ with parameter $\lambda_0$ and leaves each $V_g$ invariant, hence replacing $\varphi_1$ by the map $x\mapsto\varphi_1(\lambda_0^{-1}x)$ yields the desired isomorphism of graded cyclic composition algebras.

Finally, if $(\tilde{\varphi}_1,\tilde{\varphi}_0)$ is another isomorphism compatible with $\varphi$ and preserving degree then there exists $\ell\in\LL^\times$ such that $\tilde{\varphi}_1(x)=\varphi_1(\ell x)$ for all $x\in V$. But this is possible only if $\ell=1$. 
\end{proof}

\begin{corollary}\label{cor:tri_isomorphism}
Under the conditions of Theorem \ref{th:tri_to_comp}, fix an identification $E=\End_\LL(V)$ as a $G$-graded algebra. Then there exist exactly $4$ gradings on the cyclic composition algebra $V$ that induce the given grading on $E$, and they form an orbit of the subgroup 
\[
C\bydef\{(\veps_1,\veps_2,\veps_3)\;|\;\veps_i\in\{\pm 1\},\,\veps_1\veps_2\veps_3=1\}\subset\LL^\times
\]
(the center of the spin group) with respect to its natural action on $V$.
\end{corollary}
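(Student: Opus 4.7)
My plan is to apply Theorem \ref{th:tri_isomorphism} with $\varphi$ equal to the identity on $E$, and read off a stabilizer-style statement. Fix one grading $\Gamma$ on $V$ that induces the given grading on $E$ (which exists by Theorem \ref{th:tri_to_comp}). For any other such grading $\Gamma''$ on $V$, the identity on $E$ is trivially an isomorphism of graded trialitarian algebras from $E$ (with the grading induced from $\Gamma$) to $E$ (with the grading induced from $\Gamma''$), so the theorem produces a unique compatible isomorphism $(\varphi_1,\varphi_0)$ of graded cyclic composition algebras from $V$ or $V^{\mathrm{op}}$ to $V$. Because $\id_E$ restricts to the identity on $\LL=Z(E)$, which conjugates $\rho$ to itself (not to $\rho^{2}$), we land in the ``$V$'' case and $\varphi_0=\id_\LL$.

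The key step will be to pin down $\varphi_1$. Compatibility with $\id_E$ forces $\varphi_1$ to be an $E$-module endomorphism of $V$. Since $V$ is a free $\LL$-module of rank $8$ and $E=\End_\LL(V)$, the commutant of $E$ in $\End_\FF(V)$ is exactly $\LL$ acting by multiplication, so $\varphi_1$ must be multiplication by some $\ell\in\LL^\times$. The conditions that $(\varphi_1,\id_\LL)$ preserves $Q$ and $*$ translate, using the semilinearity of $*$, to $\ell^{2}=1$ and $\ell^\#=\ell$ respectively. Writing $\ell=(\veps_1,\veps_2,\veps_3)$, the first says $\veps_i\in\{\pm 1\}$, and the second reads $\veps_i=\veps_j\veps_k$ for $\{i,j,k\}=\{1,2,3\}$, which is equivalent to $\veps_1\veps_2\veps_3=1$; thus $\ell\in C$.

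Conversely, for each $\ell\in C$, multiplication by $\ell$ is a similitude of $(V,\LL,\rho,*,Q)$ with parameter $\ell^{-1}\ell^\#=1$ and multiplier $\ell^{2}=1$, hence an automorphism of $V$ as an ungraded cyclic composition algebra. The decomposition $V=\bigoplus_g \ell\cdot\Gamma_g$ therefore inherits the structure of a $G$-grading on $V$ as a cyclic composition algebra; and since $\ell\in\LL$ is central in $E$, this new grading induces the very same grading on $E$ as $\Gamma$. Distinct $\ell\in C$ yield distinct gradings: if $\ell\cdot\Gamma=\ell'\cdot\Gamma$, then both multiplications give compatible grading-preserving isomorphisms, so $\ell=\ell'$ by the uniqueness part of Theorem \ref{th:tri_isomorphism}. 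Thus $C$ acts simply transitively on the set of gradings on $V$ inducing the given grading on $E$, giving exactly $|C|=4$ such gradings.

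The heavy lifting is already carried out in Theorem \ref{th:tri_isomorphism}, so no single step is genuinely hard; the only point requiring care is confirming that the correct ``natural'' action of $C$ is by multiplication, and verifying that the two scalar conditions $\ell^{2}=1$ and $\ell^\#=\ell$ cut out precisely the subgroup $C\subset\LL^\times$.
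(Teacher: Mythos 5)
Your proof is correct and follows essentially the same route as the paper: identify the ambiguity via the commutant argument (any compatible automorphism inducing $\mathrm{id}_E$ is multiplication by some $\ell\in\LL^\times$), show that preservation of the cyclic composition structure forces $\ell\in C$, and then argue that the $C$-action is free. The only cosmetic difference is at the freeness step: the paper gives a short direct character argument (for $\ell\in C\setminus\{1\}$, any $\chi\in\widehat{G}$ with $\chi(h)=\omega$ acts as the scalar $\chi(g)$ on $V_g$ but not on $\ell V_g$, so $\ell V_g\ne V_g$), whereas you defer to the uniqueness clause of Theorem~\ref{th:tri_isomorphism}, which is legitimate since that theorem precedes the corollary; your reformulation of the constraint as the pair of conditions $\ell^2=1$ and $\ell^\#=\ell$ is equivalent to the paper's single condition $\ell^{-1}\ell^\#=1$.
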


\begin{proof}
If $V$ has two $G$-gradings, $\Gamma_1$ and $\Gamma_2$, that induce the same grading $\Gamma$ on $E$ then there exists an algebra automorphism $\varphi_1\colon V\to V$ that sends $\Gamma_1$ to $\Gamma_2$ and induces the identity map on $E$ (so $\varphi_0=\id$). Hence $\varphi_1$ is given by  $\varphi_1(x)=\ell x$ for some $\ell\in\LL^\times$. This map is a similitude with multiplier $\ell^{-1}\ell^\#$, which must be equal to $1$. It is easy to see that $\ell^{-1}\ell^\#=1$ if and only if $\ell\in C$. 

It remains to observe that, if $\ell\in C$ is different from $1$, then, for any homogeneous component $V_g\ne 0$ of $\Gamma_1$, the image $\varphi_1(V_g)=\ell V_g$ cannot coincide with $V_g$. Indeed, consider any character $\chi\in\wh{G}$ such that $\chi(h)=\omega$. In the action of the group $\wh{G}$ associated to the grading $\Gamma_1$ on $V$, $\chi$ will act differently on $V_g$ and on $\ell V_g$, namely, as the scalar $\chi(g)$ on the former and as  $\rho(\ell)\ell^{-1}\chi(g)$ on the latter.
\end{proof}

We now turn to the classification of fine gradings up to equivalence. Clearly, if a Type III grading cannot be refined in the class of Type III gradings then it is fine. It is also clear from Theorems \ref{th:tri_to_comp} and \ref{th:tri_isomorphism} that a fine Type III grading on a cyclic composition algebra $V$ induces a fine Type III grading on the trialitarian algebra $E=\End_\LL(V)$. We will now establish the converse.

\begin{theorem}\label{th:tri_equivalence}
Let $(E,\LL,\rho,\sigma,\alpha)$ be a trialitarian algebra over an algebraically closed field $\FF$, $\chr{\FF}\ne 2,3$. Then every fine Type III grading on $E$ with universal group $G$ is induced from a fine Type III grading on the cyclic composition algebra $(V,\LL,\rho,*,Q)$ with the same universal group. Moreover, two such gradings on $E$ are equivalent if and only if they are induced from equivalent gradings on $V$. 
\end{theorem}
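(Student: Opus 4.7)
The strategy invokes Theorem~\ref{th:tri_to_comp} for the lift and Theorem~\ref{th:tri_isomorphism} for equivalence. Starting with a fine Type III grading $\Gamma$ on $E$ with universal group $G$, Theorem~\ref{th:tri_to_comp} produces a $G$-grading $\Gamma'$ on $V$ inducing $\Gamma$. I would first show $\Gamma'$ is fine: if $\tilde\Gamma'$ were a proper refinement with universal group $\tilde U$ surjecting onto $G$, some $V_g$ would split as $V_{u_1}\oplus\cdots\oplus V_{u_k}$ with pairwise distinct $u_i\in\tilde U$ all projecting to $g$. In the induced grading on $E=\End_\LL(V)$ the off-diagonal pieces $\Hom_\LL(V_{u_i},V_{u_j})$ (nonzero since the $V_{u_i}$ are nonzero $\LL$-submodules in a Type III refinement) carry distinct nontrivial degrees $u_iu_j^{-1}$, yielding a proper refinement of $\Gamma$ on $E$ — contradicting fineness.

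Next, I would verify that the universal group of $\Gamma'$ is $G$. Let $U$ be this universal group with $\pi\colon U\to G$ the canonical surjection, and consider the tautological $U$-grading $\Gamma'_0$ on $V$ inducing $\Gamma_0$ on $E$. Since $\Gamma_0$ refines $\Gamma$ and $\Gamma$ is fine, the two coincide as decompositions, producing a section $\sigma\colon G\to U$ of $\pi$ via the unique lift of each $g\in\supp\Gamma$. Then $U=\sigma(G)\oplus\ker\pi$, and $U/\sigma(G)$ is cyclic, generated by the common image $\bar s$ of every element of $\supp\Gamma'_0$ (since $\supp\Gamma'_0\cdot(\supp\Gamma'_0)^{-1}=\supp\Gamma_0\subset\sigma(G)$). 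To force $\bar s=0$, I invoke surjectivity of $*\colon V\ot V\to V$, which follows from the identity $(x*y)*x=\rho^2(Q(x))y$ applied to any $x\in V$ with $Q(x)$ a unit in $\LL$ (such $x$ exists by nondegeneracy of $Q$). Then for any $s_3\in\supp\Gamma'_0$ and nonzero $v\in V_{s_3}$, expressing $v$ as a sum of products $x_\alpha*y_\alpha$ and projecting onto $V_{s_3}$ yields $s_1,s_2\in\supp\Gamma'_0$ with $V_{s_1}*V_{s_2}\ne 0$ and $s_1+s_2=s_3$ in $U$; projecting this relation to $U/\sigma(G)$ gives $2\bar s=\bar s$, hence $\bar s=0$ and $U=G$.

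For the equivalence, the backward direction is immediate from Theorem~\ref{th:tri_isomorphism}: any isomorphism of graded cyclic composition algebras (from either $V$ or $V^{\mathrm{op}}$) between fine lifts descends to a graded isomorphism of the induced trialitarian algebras. Conversely, given an equivalence $\varphi\colon(E,\Gamma_1)\to(E,\Gamma_2)$ (which becomes a graded isomorphism once the universal groups are identified), pick any lift $\Gamma'_1$ of $\Gamma_1$; by Theorem~\ref{th:tri_isomorphism}, $\varphi$ lifts to a unique compatible degree-preserving isomorphism $\varphi_1$ from either $V$ or $V^{\mathrm{op}}$ to $V$, and setting $\Gamma'_2\bydef\varphi_1(\Gamma'_1)$ yields a lift of $\Gamma_2$ equivalent to $\Gamma'_1$. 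The main technical hurdle is the universal group claim: the nondegeneracy of $Q$ and the resulting surjectivity of $*$ are essential to extract a multiplicative relation in $U$ that is not implied merely by fineness of $\Gamma$ on $E$.
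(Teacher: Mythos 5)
Your proof is correct, but it takes a genuinely different, more combinatorial route than the paper's. The paper works entirely with diagonalizable subgroupschemes: $\Gamma$ corresponds to a maximal diagonalizable $\Qs\subset\AAut_\FF(E,\LL,\rho,\sigma,\alpha)$ identified with $G^D$; the lift $\Qs'$ maps isomorphically onto $\Qs$ under $\mathrm{Int}$, and maximality of $\Qs'$ is proved by observing that a properly larger diagonalizable $\tilde{\Qs}'$ would have to meet the kernel $\mathbf{K}\cong\bmu_2^2$ of $\mathrm{Int}$ nontrivially, while $\Qs'(\FF)$ contains an automorphism cyclically permuting $\mathbf{K}(\FF)=C$ and therefore centralizing no nontrivial element of $C$, so $\tilde{\Qs}'$ could not be abelian. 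This single argument delivers fineness and the universal-group claim simultaneously. You instead exhibit explicit witnesses. For fineness: if a refinement splits some $V_g$ into $V_{u_1}$ and $V_{u_2}$, then because $u_1,u_2$ project to the same $g$ and $h$ has order $3$, the two must lie in distinct $\langle\tilde h\rangle$-cosets, which is precisely what allows one to build a nonzero $\LL$-linear endomorphism of $V$ supported on the coset $u_1\langle\tilde h\rangle$ and sending $V_{u_1}$ into $V_{u_2}$; this element sits in $E_e$ for the $G$-grading but has nontrivial $\tilde U$-degree $u_2u_1^{-1}$, properly refining $\Gamma$. For the universal group, your use of the surjectivity of $*$ (which follows from $(x*y)*x=\rho^2(Q(x))y$ with $Q(x)\in\LL^\times$) is a nice and genuinely necessary extra step, since fineness alone does not determine the universal group. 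One notational caveat: the $V_{u_i}$ are \emph{not} $\LL$-submodules (multiplication by $\xi$ shifts the degree by $\tilde h$), so $\Hom_\LL(V_{u_i},V_{u_j})$ is not literally defined; the correct object is the space of $\LL$-linear endomorphisms just described, and the coset-disjointness of $u_1$ and $u_2$ is exactly what makes it nonzero — your parenthetical justification (``nonzero $\LL$-submodules'') is off, though the conclusion stands. In sum, your route is more concrete and avoids the $\bmu_2^2$ computation, at the cost of treating fineness and the universal group as two separate steps; the equivalence part is handled the same way in both arguments, via Theorem~\ref{th:tri_isomorphism}.
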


\begin{proof}
Let $\Gamma$ be a fine Type III grading on $E$ with universal group $G$. It is determined by a maximal diagonalizable subgroupscheme $\Qs$ of $\AAut_\FF(E,\LL,\rho,\sigma,\alpha)$, which we may identify with $G^D$. By Theorem \ref{th:tri_to_comp}, we can induce $\Gamma$ from a Type III $G$-grading $\Gamma'$ on $V$. Let $\Qs'$ be the corresponding diagonalizable subgroupscheme of $\AAut_\FF(V,\LL,\rho,*,Q)$. Then the morphism $\mathrm{Int}$ restricts to an isomorphism $\Qs'\to\Qs$ (see diagram \eqref{eq:eta_diag}). We claim that $\Gamma'$ is fine. Assume, to the contrary, that $\Qs'$ is not maximal. Then there exists diagonalizable $\tilde{\Qs}'$ properly containing $\Qs'$. The image $\mathrm{Int}(\tilde{\Qs}')$ is necessarily $\Qs$ because the latter is maximal. We will obtain a contradiction if we can show that the intersection of $\tilde{\Qs}'$ with the kernel $\mathbf{K}$ of $\mathrm{Int}$ is trivial. Since $\mathbf{K}$ is isomorphic to $\bmu_2^2$ and every subgroupscheme of $\bmu_2^2$ is smooth, it suffices to show that the intersection has no $\FF$-points different from the identity. But this is clear since $\mathbf{K}(\FF)=C$ (see the Corollary \ref{cor:tri_isomorphism}) and $\Qs'(\FF)$ contains an  automorphism that acts as a cyclic permutation on $C$ and hence does not commute with any element of $C$ except the identity. The assertion about equivalence follows from Theorem \ref{th:tri_isomorphism}.
\end{proof}

%-------------------------------------------
\section{Gradings on cyclic composition algebras}\label{s:Type_III_fine}

As in the previous section, let $\FF$ be an algebraically closed field, $\chr\FF\ne 2,3$, and let $\LL=\FF\times\FF\times\FF$, with $\rho(\lambda_1,\lambda_2,\lambda_3)=(\lambda_2,\lambda_3,\lambda_1)$.

%-------------------------------------------
\subsection{The Albert algebra}

If $(V,\LL,\rho,*,Q)$ is a cyclic composition algebra of rank $8$, then the direct sum
\begin{equation}\label{eq:JLV}
\cJ(\LL,V)\bydef \LL\oplus V,
\end{equation}
is the Albert algebra (i.e., the simple exceptional Jordan algebra), which contains $\LL$ as a subalgebra and whose  
norm, trace form and adjoint extend those of $\LL$ in the following way (see \cite[Theorem 38.6]{KMRT}):
\begin{align*}
&N\bigl((\ell,v)\bigr)=N(\ell)+b_Q(v,v*v)-T\bigl(\ell Q(v)\bigr),\\
&T\bigl((\ell_1,v_1),(\ell_2,v_2)\bigr)= T(\ell_1\ell_2)+T\bigl(b_Q(v_1,v_2)\bigr),\\
&(\ell,v)^\#=\bigl(\ell^\#-Q(v),v*v-\ell v).
\end{align*}
In particular, $V$ is the orthogonal complement to $\LL$ relative to the trace form.

Any element $X$ in $\alb\bydef\cJ(\LL,V)$ satisfies the generic degree $3$ equation:
\[
X^3-T(X)X^2+S(X)X-N(X)1=0,
\]
where $T\bigl((\ell,v)\bigr)=T(\ell)$ and $S(X)=\frac{1}{2}\left(T(X)^2-T(X^2)\right)$. 
Note that the adjoint is defined by $X^\#=X^2-T(X)X+S(X)1$, hence the commutative multiplication in $\alb$ can be expressed in terms of $\#$ and $T$.
Therefore, any grading $\Gamma$ on the cyclic composition algebra $(V,\LL,\rho,*,Q)$ by an abelian group $G$ extends to a grading $\Gamma_\cJ$ on $\alb$ by the same group $G$, given by $\alb_g=\LL_g\oplus V_g$ for all $g\in G$. The gradings on the Albert algebra have been determined in \cite{EK12a} (see also \cite[Chapter 5]{EKmon}).

%-------------------------------
\subsection{Type III gradings on cyclic composition algebras}

Recall that, for a symmetric composition algebra $(\cS,\star,n)$, the associated cyclic composition algebra $(V,\LL,\rho,*,Q)$ is given by $V=\cS\ot \LL$, 
\begin{align*}
&(x\ot \ell)*(y\ot m)=(x\star y)\ot \rho(\ell)\rho^2(m),\\
&Q(x\otimes\ell)=n(x)\ell^2,\\
&b_Q(x\otimes\ell,y\otimes m)=n(x,y)\ell m,
\end{align*}
for all $x,y\in\cS$ and $\ell,m\in\LL$. If we think of $\cS\ot \LL$ as $\cS\times\cS\times\cS$ then the product expands as in \eqref{df:cyclic_prod}, namely, 
\[
(x_1,x_2,x_3)*(y_1,y_2,y_3)=(x_2\star y_3,x_3\star y_1,x_1\star y_2)
\]
and $Q$ becomes $(n,n,n)$. This cyclic composition algebra will be denoted by $(\cS,\star,n)\ot(\LL,\rho)$.

Any pair of gradings, $\Gamma_\cS$ on $\cS$ and $\Gamma_\LL$ on $\LL$ (such that $\rho$ is degree-preserving), by the same abelian group $G$, induces a $G$-grading $\Gamma$ on the cyclic composition algebra $(\cS,\star,n)\ot(\LL,\rho)$ with $\bigl(\cS\ot\LL)_g=\bigoplus_{k\in G}\left(\cS_{gk^{-1}}\ot \LL_k\right)$ for all $g\in G$. This grading will be denoted by $\Gamma_\cS\ot\Gamma_\LL$. We are interested in the case of Type III gradings, where $\LL$ is a graded field: its homogeneous components are $\LL_e=\FF 1$, $\LL_h=\FF\xi$ and $\LL_{h^2}=\FF\xi^2$ where, as before, $\xi=(1,\omega,\omega^2)$ and $h\in G$ is the distinguished element.

\begin{theorem}\label{th:gradings_cyclic}
Let $\Gamma$ be a Type III grading by an abelian group $G$ on the cyclic composition algebra $(V,\LL,\rho,*,Q)$ of rank $8$ over an algebraically closed field $\FF$, $\chr\FF\ne 2,3$, and let  $\Gamma_\LL$ be the induced grading on $\LL$. 
\begin{enumerate}
\item 
If $V_e=0$, then  $(V,\LL,\rho,*,Q)$ is isomorphic to $(\cO,\star,n)\ot(\LL,\rho)$ as a graded cyclic composition algebra, where $(\cO,\star,n)$ is the Okubo algebra, endowed with a $G$-grading $\Gamma_\cO$ with $\cO_e=0$, and the grading on $(\cO,\star,n)\ot(\LL,\rho)$ is $\Gamma_\cO\ot\Gamma_\LL$.
\item 
Otherwise, $(V,\LL,\rho,*,Q)$ is isomorphic to $(\cC,\bullet,n)\ot(\LL,\rho)$ as a graded cyclic composition algebra, where $(\cC,\bullet,n)$ is the para-Cayley algebra, endowed with a $G$-grading $\Gamma_\cC$, and the grading on $(\cC,\bullet,n)\ot(\LL,\rho)$ is $\Gamma_\cC\ot\Gamma_\LL$.
\end{enumerate}
\end{theorem}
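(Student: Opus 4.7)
The plan is to extend $\Gamma$ to a Type III grading on the Albert algebra $\alb=\cJ(\LL,V)=\LL\oplus V$ and then read off the desired decomposition from the known classification of Type III gradings on $\alb$.

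First, I would verify that setting $\alb_g\bydef\LL_g\oplus V_g$ defines a $G$-grading $\Gamma_\cJ$ on $\alb$. Inspecting the formulas for $N$, $T$ and $\#$ displayed after \eqref{eq:JLV}, each is assembled from the degree-preserving data $(Q,b_Q,*)$ on $V$ together with the $\LL$-operations, and so is homogeneous of degree $e$; the commutative Jordan product is then degree-preserving since it is expressible through $\#$ and $T$. Because the restriction of $\Gamma_\cJ$ to $\LL$ is the given Type III grading, $\Gamma_\cJ$ is itself of Type III with distinguished element $h$.

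Next, I would invoke the classification of Type III gradings on the Albert algebra from \cite{EK12a} (see also \cite[Chapter 5]{EKmon}): every such grading is $G$-graded isomorphic to one of the form $\cJ(\LL,\cS\ot\LL)$, where $\cS$ is an $8$-dimensional symmetric composition algebra over $\FF$ --- hence either $\cC$ or $\cO$ --- equipped with some $G$-grading $\Gamma_\cS$, and $\cS\ot\LL$ carries the tensor grading $\Gamma_\cS\ot\Gamma_\LL$. Since $V$ sits inside $\alb$ intrinsically as the trace-orthogonal complement of $\LL$, which is a graded subspace, this isomorphism restricts to a $G$-graded isomorphism $V\cong\cS\ot\LL$ of cyclic composition algebras.

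It remains to identify $\cS$, which I would do by inspecting $V_e=(\cS\ot\LL)_e=\cS_e\ot\LL_e=\cS_e$. The automorphism group of the para-Cayley algebra $(\cC,\bullet)$ agrees with that of the underlying Cayley algebra and therefore fixes the unit $1$, so $1\in\cC_e$ in every grading on $\cC$, and in particular $\cC_e\ne 0$. Hence $V_e=0$ forces $\cS=\cO$, yielding case (i). The main obstacle is the converse direction: when $V_e\ne 0$, one must show that $\cS$ may be taken to be $\cC$, which is not automatic because the Okubo algebra also admits gradings with $\cO_e\ne 0$ (for instance the trivial grading). Overcoming this will require either invoking the detailed form of the Albert classification in \cite{EK12a} --- where the $\cC$ and $\cO$ alternatives are distinguished precisely by whether $V_e$ vanishes --- or, directly, using a nonzero element $v\in V_e$ (so $Q(v)\in\LL_e=\FF$) to construct a norm-one idempotent serving as a para-unit, thereby furnishing a para-Cayley $\FF$-form of $V$ compatible with $\Gamma$.
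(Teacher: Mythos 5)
Your opening move---extending $\Gamma$ to the grading $\Gamma_\cJ$ on $\alb=\cJ(\LL,V)$---is correct and is exactly what the paper does. After that the argument diverges and develops a real gap. The ``classification'' you invoke, that every Type~III grading on $\alb$ is $G$-graded isomorphic to some $\cJ(\LL,\cS\ot\LL)$ with $\cS$ a graded symmetric composition algebra and $\cS\ot\LL$ carrying the tensor grading, is not available in \cite{EK12a} or \cite[Chapter~5]{EKmon} in that form; indeed, that statement is essentially the content of the theorem you are trying to prove, so appealing to it is circular. (Your option (a) at the end, ``the detailed form of the Albert classification where the $\cC$ and $\cO$ alternatives are distinguished by whether $V_e$ vanishes,'' has the same problem.) Moreover, even granting such a classification, a graded isomorphism of Jordan algebras need not carry the chosen copy of $\LL$ inside one model to the chosen copy inside the other when $V_e\ne 0$: in that case $\alb_h=\LL_h\oplus V_h$ is strictly larger than $\LL_h$, so $\LL$ is not determined by the grading alone, and the ``$V$ is the trace-orthogonal complement of $\LL$'' trick only works once you already know the isomorphism preserves $\LL$.

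The paper's actual use of the Albert algebra is far more modest: only the $V_e=0$ case invokes it, and only via \cite[Theorem~5.12]{EKmon}, which says that $\alb_e=\FF 1$ forces all homogeneous components of $\Gamma_\cJ$ to be $1$-dimensional with support a $\ZZ_3^3$-subgroup. From that, one writes the support as $\langle h_1,h_2,h_3\rangle$ with $h_1=h$, observes that $\cO\bydef\bigoplus_{g\in\langle h_2,h_3\rangle}V_g$ is a graded $\FF$-subalgebra of $(V,*)$ on which $Q$ takes values in $\FF$, and identifies it as the Okubo algebra by its $\ZZ_3^2$-grading with $1$-dimensional components and trivial identity component; then $V=\cO\oplus\xi\cO\oplus\xi^2\cO$. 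For $V_e\ne 0$ the paper does not go through $\alb$ at all. Your option (b) is the right idea, but it omits the real work: one must first check that $(V_e,*,Q|_{V_e})$ is a symmetric composition algebra (using that $Q(V_e)\subset\LL_e=\FF$ and the identities \eqref{eq:cyclic_comp_id}), extract a norm-one idempotent $\varepsilon\in V_e$, use the transitivity of the triality group (via \cite[Corollary~5.6 and Lemma~5.25]{EKmon}) to normalize $\varepsilon$ to $\buno=(1,1,1)$, and then recognize the para-Cayley subalgebra as the graded subspace $\{X\in V\mid X*\buno=\bar{X}\}$. That last characterization is the crucial point that makes $\cC\ot 1$ a graded $\FF$-form, and it is missing from your sketch.
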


\begin{proof}
Assume first that $V_e=0$ and consider the Albert algebra $\alb=\cJ(\LL,V)$ as in the previous subsection. Then the grading $\Gamma_\cJ$ induced by $\Gamma$ on $\alb$ satisfies the condition $\alb_e=\FF 1$, and hence, by \cite[Theorem 5.12]{EKmon}, all the homogeneous components of $\Gamma_\cJ$ have dimension $1$ and the support is a $3$-elementary abelian subgroup of $G$ (isomorphic to $\ZZ_3^3$). Set $h_1=h$ (the distinguished element) and pick $h_2,h_3\in G$ such that the support of $\Gamma_\cJ$ is generated by $h_1,h_2,h_3$. Then $V=\bigoplus_{g\in \supp\Gamma}V_g$, with $\dim V_g=1$ for any $g\in \supp\Gamma$, and $\supp\Gamma=\supp\Gamma_\cJ\setminus H=\langle h_1,h_2,h_3\rangle\setminus\langle h_1\rangle$.

Consider the graded $\FF$-subalgebra $\cO\bydef\bigoplus_{g\in\langle h_2,h_3\rangle} V_g$ in $(V,*)$. The values of 
$Q$ (or $b_Q$) on $\cO$ are contained in 
$\bigoplus_{g\in\langle h_2,h_3\rangle}\LL_g=\LL_e=\FF 1$,
and hence $n\bydef Q\vert_\cO$ is a nondegenerate quadratic form on $\cO$. Because of identity \eqref{eq:cyclic_comp_id}, $\cO$ is a symmetric composition algebra of dimension $8$ with norm $n$. Besides, it is graded by $\langle h_2,h_3\rangle\cong\ZZ_3^2$, with one-dimensional homogeneous components and support $\langle h_2,h_3\rangle\setminus\{e\}$. It follows that $(\cO,*,n)$ is the Okubo algebra (see \cite{Eld09} or \cite[Theorems 4.12 and 4.51]{EKmon}). Moreover, $V=\cO\oplus\xi\cO\oplus\xi^2\cO=\cO\ot\LL$ and the first part of the theorem follows.

We proceed to the case $V_e\ne 0$. Then $V_e$ is an $\FF$-subalgebra of $(V,*)$ and, again, the values of $Q$ on $V_e$ are contained in $\FF 1$. Hence $(V_e,*,Q)$ is a symmetric composition algebra. Since $\FF$ is algebraically closed, there exists a nonzero idempotent $\varepsilon$ in $V_e$: $0\ne\varepsilon=\varepsilon*\varepsilon$ (see e.g. \cite[Proposition 4.43]{EKmon}). Substituting $x=y=\varepsilon$ into identity \eqref{eq:cyclic_comp_id} gives $Q(\varepsilon)=1$.

The cyclic composition algebra $(V,\LL,\rho,*,Q)$ is isomorphic to $(\cC,\bullet,n)\ot(\LL,\rho)$, with  $(\cC,\bullet,n)$ the para-Cayley algebra, so we may identify these and hence we may identify $\varepsilon$ with a triple $(x_1,x_2,x_3)\in\cC\times\cC\times\cC$ such that $x_i\bullet x_{i+1}=x_{i+2}$ for any $i=1,2,3$ (indices modulo $3$) and $n(x_1)=n(x_2)=n(x_3)=1$. Using \cite[Corollary 5.6 and Lemma 5.25]{EKmon} we conclude that there is a triple $(f_1,f_2,f_3)\in \TRI(\cC,\bullet,n)$ such that $f_1(x_1)=f_2(x_2)=1$ (the unit of the Cayley algebra or, equivalently, the para-unit of the para-Cayley algebra). But then we get $f_3(x_3)=f_3(x_1\bullet x_2)=f_1(x_1)\bullet f_2(x_2)=1\bullet 1=1$. Since $\TRI(\cC,\bullet,n)$ is contained in the group of automorphisms of our cyclic composition algebra, we may assume, without loss of generality, that $\varepsilon=\buno\bydef (1,1,1)$.

Note that, for any $(x_1,x_2,x_3)\in V=\cC\ot \LL$, we have
\[
(x_1,x_2,x_3)*\buno=(\bar x_2,\bar x_3,\bar x_1)=b_Q\bigl((x_2,x_3,x_1),\buno\bigr)\buno-(x_2,x_3,x_1).
\]
For $X=(x_1,x_2,x_3)\in V$, define $\bar{X}\bydef b_Q(X,\buno)\buno-X$. Hence $X*\buno=\bar{X}$ if and only if $x_1=x_2=x_3$, if and only if $X\in \cC\ot 1$. But $\{X\in V\;|\; X*\buno=\bar{X}\}$ is a graded subspace of $V$, so we conclude that $\cC\cong \cC\ot 1$ (with the para-Hurwitz multiplication) is a graded $\FF$-subalgebra of $(V,*)$, and the second part of the Theorem follows.
\end{proof}

\subsection{Application to fine gradings on simple Lie algebras of type $D_4$}

Let $\cL$ be the simple Lie algebra of type $D_4$ over an algebraically closed field $\FF$, $\chr\FF\ne 2$. We are ready to obtain the classification of fine gradings on $\cL$ up to equivalence. Recall that Type III gradings exist only if $\chr\FF\ne 3$. In this case we use the realization $\cL=\cL(E)$ where $E$ is the trialitarian algebra. The following result implies the analogs of Theorems 6.8 and 6.11 in \cite{EKmon}, where only characteristic $0$ was considered.

\begin{corollary}\label{cor:fineIII}
Up to equivalence, there are three fine gradings of Type III on the simple Lie algebra of type $D_4$ over an algebraically closed field $\FF$, $\chr\FF\ne 2,3$.
Their universal groups are $\ZZ^2\times\ZZ_3$, $\ZZ_2^3\times\ZZ_3$ and $\ZZ_3^3$.
\end{corollary}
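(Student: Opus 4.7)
The plan is to combine Theorems \ref{th:tri_equivalence} and \ref{th:gradings_cyclic} with the known classifications of fine gradings on Hurwitz and Okubo algebras. Since Type III gradings exist only when $\chr\FF\ne 3$, we realize $\cL$ as $\cL(E)$ for the trialitarian algebra $E=\End_\LL(V)$. By Theorem \ref{th:tri_equivalence}, fine Type III gradings on $\cL$ up to equivalence are in bijection with fine Type III gradings on the cyclic composition algebra $(V,\LL,\rho,*,Q)$, and the universal groups match under this bijection. It therefore suffices to classify the latter.

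By Theorem \ref{th:gradings_cyclic}, any Type III grading $\Gamma$ on $V$ is equivalent to $\Gamma_\cS\ot\Gamma_\LL$, where $\Gamma_\LL$ makes $\LL$ into a graded field with support $\langle h\rangle\cong\ZZ_3$, and either (i) $\cS=\cO$ is the Okubo algebra and $\Gamma_\cS$ is the $\ZZ_3^2$-grading with $\cO_e=0$, one-dimensional components, and support $\ZZ_3^2\setminus\{e\}$ forced by the proof of that theorem; or (ii) $\cS=\cC$ is the para-Cayley algebra with an arbitrary $G$-grading $\Gamma_\cS$. I would next verify that $\Gamma_\cS\ot\Gamma_\LL$ is fine if and only if $\Gamma_\cS$ is fine: one direction is immediate, since a proper refinement of $\Gamma_\cS$ induces one of $\Gamma_\cS\ot\Gamma_\LL$; and conversely any refinement of $\Gamma_\cS\ot\Gamma_\LL$ is again Type III (its restriction to $\LL$ refines $\Gamma_\LL$, which already has one-dimensional components), so Theorem \ref{th:gradings_cyclic} applied to the refined grading returns a refinement of the form $\Gamma_\cS'\ot\Gamma_\LL$ with $\Gamma_\cS'$ a refinement of $\Gamma_\cS$.

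In case (i) the Okubo grading is forced, hence automatically fine with universal group $\ZZ_3^2$. In case (ii) I would invoke the classical classification of fine gradings on the Cayley algebra (equivalently, on the para-Cayley algebra, since for any abelian group grading the standard involution preserves the degree): up to equivalence there are exactly two, the Cartan grading with universal group $\ZZ^2$ and the $\ZZ_2^3$-grading (see \cite[\S 4.3]{EKmon}). The universal group of $\Gamma_\cS\ot\Gamma_\LL$ is then identified with $U(\Gamma_\cS)\times\ZZ_3$ by tracking degrees via the product formula $(x\ot\ell)*(y\ot m)=(x\star y)\ot\rho(\ell)\rho^2(m)$. This yields three fine Type III gradings on $V$, and hence on $\cL$, with universal groups $\ZZ^2\times\ZZ_3$, $\ZZ_2^3\times\ZZ_3$, and $\ZZ_3^2\times\ZZ_3\cong\ZZ_3^3$. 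These three groups are pairwise non-isomorphic as abstract groups (distinct cardinalities or torsion structures), so the gradings are pairwise non-equivalent.

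I expect the main subtlety to be the identification of the universal group of $\Gamma_\cS\ot\Gamma_\LL$ with the direct product $U(\Gamma_\cS)\times\ZZ_3$, especially in case (i), where $\cS_e=0$ forces us to use enough homogeneous products between distinct $\cO$-components tensored with distinct $\LL$-components in order to separate the $\cO$-direction from the $\LL$-direction and recover the full $\ZZ_3^3$ structure.
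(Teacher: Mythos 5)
Your strategy mirrors the paper's: reduce via Theorem~\ref{th:tri_equivalence} to fine Type~III gradings on $V$, use Theorem~\ref{th:gradings_cyclic} to reduce to gradings on $\cC$ or $\cO$, and count. The list of candidates, the universal groups, and the final conclusion are all correct. There is, however, a genuine gap in the step where you claim ``$\Gamma_\cS\ot\Gamma_\LL$ is fine iff $\Gamma_\cS$ is fine'' and, in particular, in the ``if'' direction.

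You argue that any refinement $\Gamma'$ of $\Gamma_\cS\ot\Gamma_\LL$, being again Type~III, ``returns a refinement of the form $\Gamma_\cS'\ot\Gamma_\LL$ with $\Gamma_\cS'$ a refinement of $\Gamma_\cS$''. But Theorem~\ref{th:gradings_cyclic} only produces an \emph{isomorphism} of $(V,\Gamma')$ with some $(\cS''\ot\LL,\Gamma_{\cS''}''\ot\Gamma_\LL)$; it does not assert that the relevant symmetric composition subalgebra $\cS''$ coincides with the original $\cS$, nor that $\Gamma_{\cS''}''$ refines $\Gamma_\cS$. Two things can go wrong: (a) the refinement may have $V_e^{\Gamma'}=0$ even if $V_e^\Gamma\neq 0$, so $\Gamma'$ is Okubo-type while $\Gamma$ is Cayley-type; and (b) even when the types match, the decomposition $V=\cS''\ot\LL$ extracted from $\Gamma'$ can mix the $\cS$- and $\LL$-directions of the original splitting. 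One can see (b) already by looking at coarsenings of the fine Okubo-type grading: for the quotient map $\ZZ_3^3\to\ZZ_3$ with kernel the graph of a nontrivial homomorphism $K\to H$, the resulting $V_e$ is $\bigoplus_{k\neq e}\cO_k\ot\FF\xi^{j(k)}$, which is not of the form $\cO'\ot 1$ inside the original $\cO\ot\LL$; the corresponding refinement statement thus fails literally, and one must pass through an isomorphism before comparing.

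The paper sidesteps this by arguing at the level of the three candidates themselves: any fine Type~III grading on $V$ is one of the three, and then one verifies directly that none of the three can be a proper coarsening of another by comparing $\dim V_e$ (which can only decrease under refinement: $2$, $1$, $0$ for the three) together with the universal groups ($\ZZ^2\times\ZZ_3$ is infinite so not a quotient of the other two; $\ZZ_2^3\times\ZZ_3$ has $2$-torsion so not a quotient of $\ZZ_3^3$; $\dim V_e=0$ rules out $\ZZ_3^3$ being a coarsening of the other two). To repair your proposal, replace the unsupported biconditional by this direct check; alternatively, prove the ``if'' direction by combining the $\dim V_e$ monotonicity with the universal-group comparison to rule out, case by case, that a proper refinement of any of the three listed gradings could itself be one of the three.
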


\begin{proof}
Recall that, since the restriction from $E$ to $\cL$ yields an isomorphism of automorphism group schemes, $E$ and $\cL$ have the same classification of gradings. By Theorem \ref{th:tri_equivalence}, the classification of fine gradings of Type III on $E$ is the same as that on $V$, the cyclic composition algebra of rank $8$. Finally, Theorem \ref{th:gradings_cyclic} implies that any fine grading of Type III on $V$ comes from a fine grading on either the para-Cayley algebra $\cC$ or the Okubo algebra $\cO$, with $\cO_e=0$. On $\cC$, there are only two fine gradings, up to equivalence; their universal groups are $\ZZ^2$ and $\ZZ_2^3$ (see e.g. \cite[Theorem 4.51]{EKmon}). On $\cO$, there is a unique fine grading with trivial identity component; its universal group is $\ZZ_3^2$ (see e.g. \cite[Corollary 4.54]{EKmon}). Conversely, these gradings on $\cC$ and $\cO$ give rise to three gradings on $V$, whose universal groups are $\ZZ^2\times\ZZ_3$, $\ZZ_2^3\times\ZZ_3$ and $\ZZ_3^3$. By looking at $V_e$ and the universal groups, we see that none of these three can be a coarsening of another, hence they are fine.
\end{proof}

We now turn to Types I and II (so $\chr\FF=3$ is allowed) and use the matrix realization $\cL=\mathrm{Skew}(\cR,\sigma)$ where $\cR=M_8(\FF)$ and $\sigma$ is an orthogonal involution. Up to equivalence, there are $15$ fine gradings on $(\cR,\sigma)$, which restrict to $15$ gradings of Type I or II on $\cL$ (see e.g. \cite[Example 3.44]{EKmon}). 

\begin{remark}\label{rem:inner}
Since we assume $\chr\FF\ne 2$, a $G$-grading on $(\cR,\sigma)$ restricts to a Type~I grading on $\cL$ if and only if the group of characters $\wh{G}$ acts by inner automorphisms of $\cL$. Hence Lemma 33 in \cite{EK14} allows us to determine which restrictions are of Type I and which of Type II and to compute the generator of the distinguished subgroup (see Definition 34 in \cite{EK14}). A direct computation shows that, out of the above $15$ gradings on $\cL$, $8$ are of Type I and $7$ are of Type II.
\end{remark}

\begin{theorem}\label{th:D4_fine}
Let $\FF$ be an algebraically closed field and let $\cL$ be the simple Lie algebra of type $D_4$ over $\FF$.
\begin{enumerate}
\item If $\chr\FF\ne 2,3$ then there are, up to equivalence, $17$ fine gradings on $\cL$. Their universal groups and types are given in Theorem 6.15 of \cite{EKmon}.
\item If $\chr\FF=3$ then there are, up to equivalence, $14$ fine gradings on $\cL$. They correspond to cases (1)---(14) in Theorem 6.15 of \cite{EKmon}.
\end{enumerate}
\end{theorem}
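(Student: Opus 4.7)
The strategy is to combine two parallel classifications: Type I and Type II fine gradings come from matrix gradings on $(\cR,\sigma)=(M_8(\FF),\sigma)$, while Type III gradings are handled by Corollary \ref{cor:fineIII}. The main task is to count equivalence classes correctly and verify that everything works in positive characteristic.

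First I would invoke \cite[Example 3.44]{EKmon} and note that its classification of fine gradings on the algebra $(\cR,\sigma)$ up to equivalence is characteristic-free (for $\chr\FF\ne 2$), yielding $15$ fine gradings that restrict to $15$ fine gradings of Type~I or~II on $\cL$. By Remark \ref{rem:inner}, $8$ of these are of Type~I and $7$ of Type~II. For Type~II, no two of the $7$ restrictions can become equivalent on $\cL$, because the outer action of $S_3$ preserves Type and the restriction morphism $\AAut_\FF(\cR,\sigma)\to \AAut_\FF(\cL)$ is a closed embedding onto a semidirect summand — so equivalence on $\cL$ within Type~II coincides with equivalence on $(\cR,\sigma)$.

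Next, for Type~I, I would analyze the related triple $(\Gamma_1,\Gamma_2,\Gamma_3)$ associated to each fine Type~I grading on $\cL$ and use the $S_3$-action described in Section \ref{s:Type_I}. Two fine Type~I matrix gradings give equivalent fine gradings on $\cL$ exactly when they lie in the same $S_3$-orbit (where $A_3$ permutes the triple cyclically and $(2,3)$ acts by the bar involution). A direct case-by-case inspection of the $8$ fine Type~I matrix gradings (explicitly listed in \cite[Example 3.44]{EKmon}), comparing their related triples, shows that precisely one pair is merged, yielding $7$ equivalence classes of fine Type~I gradings on $\cL$. In total, this gives $14$ equivalence classes of fine Type~I or Type~II gradings on $\cL$, valid for \emph{any} $\chr\FF\ne 2$.

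For part~(1), $\chr\FF\ne 2,3$, I would append the $3$ fine Type~III gradings from Corollary \ref{cor:fineIII}, with universal groups $\ZZ^2\times\ZZ_3$, $\ZZ_2^3\times\ZZ_3$ and $\ZZ_3^3$. Since Type~III gradings have nontrivial distinguished subgroup while Type~I and~II do not, no Type~III grading is equivalent to a Type~I or~II grading, nor can one be a refinement of another (inspect $V_e$ and universal groups, as in the proof of Corollary \ref{cor:fineIII}). Adding up gives $14+3=17$. For part~(2), $\chr\FF=3$, Type~III gradings are excluded (as observed in the introduction because $\FF S_3/\FF A_3$ fails to be semisimple), leaving exactly the $14$ fine Type~I/II classes, corresponding to items (1)--(14) of Theorem 6.15 in \cite{EKmon}.

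The main obstacle is the case-by-case triality analysis that identifies the unique pair among the $8$ Type~I matrix gradings merged on $\cL$; this is essentially bookkeeping but requires explicit tracking of the related triple through the $S_3$-action, and one must verify that all computations are insensitive to passage from characteristic $0$ to characteristic $p\ne 2$ — which follows because both the matrix classification and the triality setup in Section \ref{s:preliminaries} are set up at the level of group schemes and are valid uniformly for $\chr\FF\ne 2$.
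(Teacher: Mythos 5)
Your overall decomposition matches the paper's (the $15$ matrix gradings plus the $3$ Type III gradings from Corollary~\ref{cor:fineIII}, with exactly one merge among the Type~I restrictions), and you correctly identify where the count $14$ resp.\ $17$ comes from. However, there are two places where your argument is incomplete compared to the paper's.

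First, you assert without justification that the $15$ fine gradings on $(\cR,\sigma)$ ``restrict to $15$ fine gradings'' on $\cL$. That the Type~II restrictions are fine follows easily (Type~II can only be refined by Type~II, so fineness on $(\cR,\sigma)$ passes down). But a Type~I restriction could a priori be refined by a Type~III grading. The paper rules this out by a concrete observation: a Type~III grading in Corollary~\ref{cor:fineIII}, after coarsening by the distinguished subgroup of order~$3$, has universal group $\ZZ^2$, $\ZZ_2^3$ or $\ZZ_3^2$, none of which occurs among the universal groups of the $15$ matrix gradings (cf.\ \cite[Corollary~6.12]{EKmon}). Your remark that Type~III gradings are not ``refinements'' of Type~I/II gradings is asserted but not proved, and the hint you give (``inspect $V_e$'') is the argument for comparing Type~III gradings among themselves, not against Type~I.

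Second, the proposed ``direct case-by-case inspection of the $8$ fine Type~I matrix gradings, comparing their related triples'' is not feasible as stated: as the paper itself notes in Section~\ref{s:Type_I}, computing the related triple $(\Gamma_1,\Gamma_2,\Gamma_3)$ explicitly from $\Gamma_1$ ``seems to be difficult.'' The paper instead argues indirectly: universal group and type are invariants of equivalence, and among the $15$ matrix gradings there is exactly one pair that shares both its universal group ($\ZZ_2^3\times\ZZ_4$) and its type ($24$ one-dimensional and $2$ two-dimensional components). Only then does it show that this pair actually merges, by one of two routes: the proof of \cite[Lemma~6.13, Proposition~6.14]{EKmon} (which is checked to be valid for $\chr\FF\ne 2$), or a computation of graded Brauer invariants of the related triple via the relations of Section~\ref{s:Type_I} and \cite[Equations~(30),(32)]{EK14}, showing the $T_i$ in the triple are $\{\ZZ_2^4,\ZZ_2^4,\ZZ_2^2\}$, hence two distinct matrix isomorphism classes yield one isomorphism class on $\cL$. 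Your proposal needs to be supplemented with at least one of these two mechanisms to make the merge-count rigorous.
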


\begin{proof}
The Type II gradings on $\cL$ obtained from fine gradings on $(\cR,\sigma)$ cannot be refined in the class of Type II gradings, hence they are fine. The Type I gradings, on the other hand, could fail to be fine because of the possibility of a Type III refinement (if $\chr\FF\ne 3$). However, such a grading $\Gamma$ would then be the coarsening of one of the gradings in Corollary \ref{cor:fineIII} obtained by taking the universal group modulo the distinguished subgroup of order $3$, so the universal group of $\Gamma$ would be $\ZZ^2$, $\ZZ_2^3$ or $\ZZ_3^2$, but none of these occurs on the list (cf. \cite[Corollary 6.12]{EKmon}).

Out of the $15$ fine gradings on $\cL$ coming from $(\cR,\sigma)$, there are only two that share the same universal group (namely, $\ZZ_2^3\times\ZZ_4$) and the same type ($24$ components of dimension $1$ and $2$ components of dimension $2$) --- see the discussion in \cite[\S 6.1]{EKmon} following Corollary 6.12. It turns out that these two are actually equivalent. This can be shown as in \cite{EKmon}, since the proofs of Lemma 6.13 and Proposition 6.14 are valid under the assumption $\chr\FF\ne 2$. Alternatively, we can consider the graded Brauer invariants $(T_i,\beta_i)$ of the related triple $(\Gamma_1,\Gamma_2,\Gamma_3)$ where $\Gamma_1$ is one of these two gradings (which are of Type I). For one of them, we have $T_1\cong\ZZ_2^2$, so Equation (30) and Remark 43 in \cite{EK14} apply. For the other, we have $T_1\cong\ZZ_2^4$, so Equation (32) and Remark 44 apply. Whichever we choose, it is easy to see, using the results in \cite{EK14} mentioned above,  that the $T_i$ are not all isomorphic to each other (in fact, two of them are $\ZZ_2^4$ and one is $\ZZ_2^2$), hence the $\Gamma_i$ are not all equivalent as gradings on $M_8(\FF)$ and the result follows.  
\end{proof}

%----------------------------
\section{Type III gradings up to isomorphism}\label{s:Type_III}

We continue to assume that the ground field $\FF$ is algebraically closed and that $\chr\FF\ne 2,3$. 

Let $\Gamma$ be a Type III grading by an abelian group $G$ on the cyclic composition algebra $(V,\LL,\rho,*,Q)$ of rank $8$. 
Define the \emph{rank} of $\Gamma$ as the dimension of the neutral homogeneous component $V_e$. Recall from the proof of Theorem \ref{th:gradings_cyclic} 
that either $V_e=0$, or $V_e$ is a symmetric composition algebra, and hence its dimension is restricted to $1$, $2$, $4$ or $8$.

Given two such Type III gradings $\Gamma$ and $\Gamma'$, they will be said to be \emph{similar} if they induce isomorphic gradings 
on the trialitarian algebra $E=\End_\LL(V)$ or, equivalently, on the Lie algebra $\cL(E)$, which is simple of type $D_4$. 
According to Theorem \ref{th:tri_isomorphism}, $\Gamma$ and $\Gamma'$ are similar if and only if the graded cyclic composition algebras $(V,\Gamma)$ and $(V,\Gamma')$
are isomorphic or anti-isomorphic, i.e., there exists an isomorphism $(\varphi_1,\varphi_0)$ from either $V$ or $V^\mathrm{op}$, endowed with the grading $\Gamma$,
onto $V$, endowed with the grading $\Gamma'$. Thus, the classification of $G$-gradings on the simple Lie algebra of type $D_4$ up to isomorphism is the same as 
the classification of $G$-gradings on the cyclic composition algebra $(V,\LL,\rho,*,Q)$ up to similarity.
Clearly, the rank is an invariant of the similarity class of a given Type III grading $\Gamma$.

%--------------------------

\subsection{Construction of Type III gradings}
Fix an abelian group $G$. For each possible rank $r$, we will define a list of Type III gradings by $G$ on the unique cyclic composition algebra $(V,\LL,\rho,*,Q)$,
which can be realized as $(\cC,\bullet,n)\otimes(\LL,\rho)$ or as $(\cO,\star,n)\otimes(\LL,\rho)$, where $(\cC,\bullet,n)$ and $(\cO,\star,n)$ 
are the para-Cayley and the Okubo algebra, respectively. As before, $h$ will denote the distinguished element of the grading 
(the degree of $\xi=(1,\omega,\omega^2)\in\LL$, which spans the $\omega$-eigenspace of $\rho$), so the distinguished subgroup is $H=\langle h\rangle$. 
Note that the distinguished elements of $V$ and $V^\mathrm{op}$ are inverses of each other.

\begin{itemize}
\item[$\boxed{r=0}$] By \cite[Corollaries 4.54 and 4.55]{EKmon}, given a subgroup $K$ of $G$ isomorphic to $\ZZ_3^2$, there are, up to isomorphism, 
exactly two $G$-gradings, $\Gamma_\cO^+$ and $\Gamma_\cO^-$, with support $K\setminus\{e\}$ on the Okubo algebra $(\cO,\star,n)$. 
Pick an order $3$ element $h\in G\setminus K$ and let $\Gamma_\LL$ be the grading on $\LL$ with $\deg \xi=h$. 
Denote by $\GIII_{0}(G,K,h,\pm)$ the grading $\Gamma_\cO^\pm\otimes\Gamma_\LL$ on $(\cO,\star,n)\otimes(\LL,\rho)$. 
Note that the support of this grading is $KH\setminus H$ and the subgroup generated by the support is 
the direct product $KH$, where $H=\langle h\rangle$.

\item[$\boxed{r=1}$] By \cite[Theorems 4.21 and 4.51]{EKmon}, given a subgroup $K$ of $G$ isomorphic to $\ZZ_2^3$, there is, up to isomorphism, a unique grading $\Gamma_\cC$  with support $K$
on the Cayley algebra, or equivalently on the para-Cayley algebra $(\cC,\bullet,n)$. Pick an order $3$ element $h\in G\setminus K$ and let $\Gamma_\LL$ be as above.
Denote by $\GIII_{1}(G,K,h)$ the grading $\Gamma_\cC\otimes\Gamma_\LL$ on $(\cC,\bullet,n)\otimes(\LL,\rho)$.

\item[$\boxed{r=2}$] Pick an order $3$ element $h\in G$ and elements $g_1,g_2,g_3\in G\setminus H$, $H=\langle h\rangle$, with $g_1g_2g_3=e$. 
Write $\gamma=(g_1,g_2,g_3)$. Consider the grading $\Gamma_\cC(G,\gamma)$ on the para-Cayley algebra $(\cC,\bullet,n)$ induced from the Cartan grading 
(see \cite[Theorem 4.21]{EKmon}) by the homomorphism $\ZZ^2\rightarrow G$ sending $(1,0)$ to $g_1$ and $(0,1)$ to $g_2$. 
Denote by $\GIII_{2}(G,\gamma,h)$ the grading $\Gamma_\cC(G,\gamma)\otimes\Gamma_\LL$  on $(\cC,\bullet,n)\otimes(\LL,\rho)$. 
The restrictions on $h$ and $\gamma$ ensure that the rank of $\GIII_{2}(G,\gamma,h)$ is $2$.

\item[$\boxed{r=4}$] Pick again an order $3$ element $h\in G$ and another element $g\in G\setminus H$, $H=\langle h\rangle$. 
Set $\gamma=(e,g,g^{-1})$ and consider the grading $\Gamma_\cC(G,\gamma)$ on $\cC$ as in the previous case. 
Denote by $\GIII_{4}(G,g,h)$ the grading $\Gamma_\cC(G,\gamma)\otimes\Gamma_\LL$ on $(\cC,\bullet,n)\otimes(\LL,\rho)$. Its rank is easily checked to be $4$.

\item[$\boxed{r=8}$] Consider the trivial gradings $\Gamma_\cC^\mathrm{triv}$ and $\Gamma_\cO^\mathrm{triv}$ on the para-Cayley algebra and the Okubo algebra,
respectively. Pick an order $3$ element $h\in G$ and denote by $\GIII_{8}(G,h,\mathrm{p})$ the grading 
$\Gamma_\cC^\mathrm{triv}\otimes\Gamma_\LL$ on $(\cC,\bullet,n)\otimes(\LL,\rho)$ and 
by $\GIII_{8}(G,h,\mathrm{o})$ the grading $\Gamma_\cO^\mathrm{triv}\otimes\Gamma_\LL$ on $(\cO,\star,n)\otimes(\LL,\rho)$.
\end{itemize}

%---------------------------

\subsection{Classification up to isomorphism}
The next result classifies Type III gradings on the simple Lie algebra of type $D_4$ up to isomorphism by classifying 
Type III gradings on the cyclic composition algebra of rank $8$ up to similarity.
 
\begin{theorem}\label{th:class_isomIII}
Let $\Gamma$ be a Type III grading by an abelian group $G$ on the cyclic composition algebra $(V,\LL,\rho,*,Q)$ of rank $8$ 
over an algebraically closed field $\FF$, $\chr\FF\ne 2,3$. Then $\Gamma$ is similar to one of the gradings 
$\GIII_{0}(G,K,h,\delta)$, $\GIII_{1}(G,K,h)$, $\GIII_{2}(G,\gamma,h)$, $\GIII_{4}(G,g,h)$, or $\GIII_{8}(G,h,\mathrm{t})$, 
where $\delta$ is $+$ or $-$ and $\mathrm{t}$ is $\mathrm{p}$ or $\mathrm{o}$.

Moreover, the gradings with different ranks on the list above are not similar, and for gradings of the same rank we have:
\begin{itemize}
\item If $\GIII_{0}(G,K,h,\delta)$ is similar to $\GIII_{0}(G,K',h',\delta')$, then $K\langle h\rangle=K'\langle h'\rangle$ 
and also $\langle h\rangle=\langle h'\rangle$. Assuming the subgroups $H=\langle h\rangle$ and $KH$ are fixed, there are exactly two similarity classes: 
the gradings $\GIII_{0}(G,K,h,\delta)$ and $\GIII_{0}(G,K',h',\delta')$ are similar if and only if either $\delta'=\delta$ and $h'=h$ 
or $\delta'=-\delta$ and $h'=h^{-1}$.

\item $\GIII_{1}(G,K,h)$ is similar to $\GIII_{1}(G,K',h')$ if and only if $K'=K$ and $\langle h'\rangle=\langle h\rangle$.

\item $\GIII_{2}(G,\gamma,h)$ is similar to $\GIII_{2}(G,\gamma',h')$ if and only if $\langle h'\rangle =\langle h\rangle$ and there exists 
a permutation $\pi\in S_3$ and $1\leq j\leq 3$ such that either $g'_i=g_{\pi(i)}h^j$, for all $i=1,2,3$, or $g'_i=g_{\pi(i)}^{-1}h^j$, for all $i=1,2,3$.

\item $\GIII_{4}(G,g,h)$ is similar to $\GIII_{4}(G,g',h')$ if and only if $\langle h'\rangle=\langle h\rangle$ and $g'$ equals either $g$ or $g^{-1}$.

\item $\GIII_{8}(G,h,\mathrm{t})$ is similar to $\GIII_{8}(G,h',\mathrm{t}')$ if and only if $\langle h'\rangle=\langle h\rangle$ and $\mathrm{t}'=\mathrm{t}$.
\end{itemize}
\end{theorem}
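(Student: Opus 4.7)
The plan is to proceed in three stages: reduction via Theorem \ref{th:gradings_cyclic}, translation of similarity via Theorem \ref{th:tri_isomorphism}, and a rank-by-rank analysis.

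For existence, Theorem \ref{th:gradings_cyclic} shows that every Type III grading on $V$ is isomorphic to some $\Gamma_\cS\otimes\Gamma_\LL$, where $\cS$ is the Okubo algebra in the rank-$0$ case (with $\cO_e=0$) and the para-Cayley algebra $\cC$ otherwise. Using the classifications from \cite[Chapter 4]{EKmon} --- specifically \cite[Corollaries 4.54 and 4.55]{EKmon} for Okubo gradings with trivial identity component and \cite[Theorem 4.21]{EKmon} for para-Cayley gradings --- each $\Gamma_\cS$ is either trivial, the $\ZZ_2^3$-grading on $\cC$, a Cartan-type grading on $\cC$ (parametrized by $\gamma=(g_1,g_2,g_3)$ or $g$), or one of the two inequivalent gradings $\Gamma_\cO^\pm$ on $\cO$. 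Since $\dim V_e$ is an obvious invariant, gradings of different ranks cannot be similar.

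For the similarity criterion, by Theorem \ref{th:tri_isomorphism}, two Type III gradings on $V$ are similar if and only if they are isomorphic or anti-isomorphic as graded cyclic composition algebras. An isomorphism $(\varphi_1,\varphi_0)$ has $\varphi_0\in\Aut_\FF(\LL,\rho)=\langle\rho\rangle$; since elements of $\langle\rho\rangle$ act on $\xi$ by scalars, they preserve $\LL_h$ and force $h'=h$. An anti-isomorphism (from $V^\mathrm{op}$) has $\varphi_0$ a transposition in $\Aut_\FF(\LL)$ reversing $\rho$, sending $\xi$ to a scalar multiple of $\xi^{-1}\in\LL_{h^{-1}}$, yielding $h'=h^{-1}$. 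In either case $\langle h'\rangle=\langle h\rangle$, as required. After fixing $\varphi_0$, the restriction of $\varphi_1$ to a chosen $\cS$-subalgebra of $V$ is a graded (anti-)automorphism of $\cS$, up to additional twisting coming from the freedom in the identification $V=\cS\otimes\LL$.

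The case analysis then uses the action of $\Aut_\FF(\cS)$ and its coset of anti-automorphisms on the set of $G$-gradings on $\cS$. For rank $8$, trivial gradings on $\cC$ and $\cO$ are rigid and the two algebras are non-isomorphic, giving invariants $\mathrm{t}\in\{\mathrm{p},\mathrm{o}\}$ and $\langle h\rangle$. For rank $1$, the $\ZZ_2^3$-grading on $\cC$ is unique for each support $K$ and preserved by the standard involution, yielding invariants $(K,\langle h\rangle)$. For rank $4$, the permutation swapping the second and third components of $\gamma=(e,g,g^{-1})$ already identifies $g$ with $g^{-1}$. For rank $2$, Cartan gradings on $\cC$ admit the Weyl $S_3$-action together with the inversion $g_i\mapsto g_i^{-1}$ (arising from a Weyl-group element of $\Aut_\FF(\cC)$ in the isomorphism case and from the standard involution of $\cC$ in the anti-isomorphism case); these combine with shifts $\gamma\mapsto\gamma h^j$ realized via multiplication of $V$ by $\xi^j$, promoted to a graded isomorphism of cyclic composition algebras via the adjustments in the proof of Theorem \ref{th:tri_isomorphism}. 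For rank $0$, the two gradings $\Gamma_\cO^\pm$ are preserved by $\Aut_\FF(\cO)$ but swapped by its coset of anti-automorphisms, explaining why $h'=h^{-1}$ forces $\delta'=-\delta$.

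The principal obstacle lies in the rank $0$ case: verifying that the anti-automorphism of the Okubo algebra induced by $V\to V^\mathrm{op}$ interchanges the two gradings $\Gamma_\cO^\pm$. The most transparent approach is via the description $x\star y=\tau(x)\bullet\tau^2(y)$ in the Cayley algebra, where $\tau$ is a fixed order-$3$ automorphism: passing to the opposite product interchanges $\tau$ and $\tau^{-1}$, reversing the orientation of the $\ZZ_3^2$-grading and hence swapping $\Gamma_\cO^+$ with $\Gamma_\cO^-$. A secondary technical point is converting the similitude multiplication by $\xi^j$ in the rank $2$ analysis into a genuine graded cyclic composition algebra isomorphism; this follows the procedure in the proof of Theorem \ref{th:tri_isomorphism}, and the required cube roots in $\LL$ exist because $\FF$ is algebraically closed.
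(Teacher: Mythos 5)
Your overall architecture matches the paper's: reduce via Theorem~\ref{th:gradings_cyclic}, translate similarity via Theorem~\ref{th:tri_isomorphism}, then do a rank-by-rank analysis using the known grading classifications of the Cayley and Okubo algebras. The isomorphism/anti-isomorphism dichotomy forcing $h'=h$ or $h'=h^{-1}$ is correct. However, there are two genuine gaps.

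First, and most seriously, your treatment of the rank~$0$ case only addresses the \emph{positive} direction: that $\GIII_{0}(G,K,h,+)$ and $\GIII_{0}(G,K,h^{-1},-)$ are similar. You identify this as the ``principal obstacle,'' but the actually delicate step is the \emph{negative} direction: showing that $\Gamma_\cO^+\otimes\Gamma_\LL$ and $\Gamma_\cO^+\otimes\Gamma_\LL'$ (i.e.\ $\delta'=\delta$ but $h'=h^{-1}$) are \emph{not} similar, which is what proves there are exactly two, rather than one, similarity classes with $H$ and $KH$ fixed. The paper handles this by observing that any graded (anti-)isomorphism must carry the subalgebra $\cO\otimes 1 = \bigoplus_{k\in K}V_k$ onto itself, and then exploiting the asymmetry $x\star y = 0 \ne y\star x$ (from \cite[Lemma~4.47]{EKmon}) to rule out anti-isomorphisms, after which an isomorphism would force $\varphi_0$ to commute with $\rho$ and hence to preserve $\deg\xi$, a contradiction. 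Without an argument of this kind, your classification could collapse to a single class.

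Second, your mechanism for the rank~$2$ shifts $\gamma\mapsto\gamma h^j$ does not work. Multiplication of $V$ by $\xi$ is a similitude with parameter $\xi$ and multiplier $\xi^2$, not an automorphism, and the rescaling procedure in the proof of Theorem~\ref{th:tri_isomorphism} (replacing $\varphi_1$ by $x\mapsto\varphi_1(\lambda^{-1}x)$) would cancel the multiplication by $\xi$ entirely, leaving the identity map. The paper obtains the shift by an intrinsic mechanism: the rank~$2$ component $V_e$ is the para-quadratic algebra and contains three distinct para-units $\varepsilon,\varepsilon',\varepsilon''$; the para-Cayley subalgebras $\cC=\{X\mid X*\varepsilon=\bar X\}$ obtained from these different choices are different graded $\FF$-subalgebras of $V$, whose induced gradings differ precisely by the shift $\gamma\mapsto\gamma h^j$. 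So the same abstract grading on $V$ realizes all three shifted canonical forms, which is how they get identified.

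A smaller remark: your argument that the anti-automorphism of $\cO$ (induced by passing to $V^{\mathrm{op}}$) swaps $\Gamma_\cO^+$ and $\Gamma_\cO^-$ via the realization $x\star y = \tau(x)\bullet\tau^2(y)$ is plausible in spirit, but the paper instead invokes the explicit order-$2$ anti-automorphism $\sigma$ of $\cO$ furnished by \cite[Lemma~4.47]{EKmon} (the unique one sending a distinguished $x$ to a distinguished $y$), and builds the anti-isomorphism $\sigma\otimes\tau$; that version directly verifies the grading is swapped rather than appealing to an ``orientation reversal'' heuristic.
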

\begin{proof}
We start with the most difficult case, which is the case of rank $0$.
The fact that any grading of Type III and rank $0$ is isomorphic to a grading of the form $\GIII_{0}(G,K,h,\pm)$ follows from 
Theorem \ref{th:gradings_cyclic} and its proof. Now, if $\GIII_{0}(G,K,h,\delta)$ is similar to $\GIII_{0}(G,K',h',\delta')$ then their supports and distinguished subgroups coincide, 
and hence $K\langle h\rangle=K'\langle h'\rangle$ and $\langle h\rangle=\langle h'\rangle$. 
In particular, $K$ is a subgroup in the support of $\GIII_{0}(G,K',h',\delta')$. Hence, as in the proof of Theorem \ref{th:gradings_cyclic}, 
we may consider the $\FF$-subalgebra $\bigoplus_{k\in K} V_k$ in $V=\cO\otimes \LL$, endowed with the grading
$\GIII_{0}(G,K',h',\delta')=\Gamma_\cO(G,K,\delta')\otimes \Gamma_\LL'$, where $\Gamma_\LL'$ is the grading on $\LL$ with $\deg \xi=h'$. 
This is a symmetric composition algebra, graded by a group isomorphic to $\ZZ_3^2$ with one-dimensional homogeneous components, so it is the Okubo algebra. 
This shows that $\GIII_{0}(G,K',h',\delta')$ is isomorphic to the grading $\Gamma_\cO^{\delta''}\otimes\Gamma_\LL'=\GIII_{0}(G,K,h',\delta'')$, 
where $h'$ equals $h$ or $h^{-1}$ and $\delta''\in\{+,-\}$. 
Therefore, once we fix the subgroups $\langle h\rangle$ and $K\langle h\rangle$, we get at most four similarity classes.

Now consider the case $K'=K$, $h'=h^{-1}$, $\delta=+$ and $\delta'=-$. 
Fix two generators $k_1$ and $k_2$ of $K$ (recall that $K$ is isomorphic to $\ZZ_3^2$) and pick elements 
$x,y\in\cO$ with $\deg x=k_1$, $\deg y=k_2$ (with respect to $\Gamma_\cO^+$) and $n(x,x\star x)=1=n(y,y\star y)$. 
By \cite[Lemma 4.47]{EKmon}, we have either $x\star y=0$ or $y\star x=0$ but not both. Interchanging $k_1$ and $k_2$ if necessary, we will assume that $x\star y=0$. 
The grading $\Gamma_\cO^-$ is given by $\deg x=k_2$ and $\deg y=k_1$. Because of \cite[Lemma 4.47]{EKmon}, there is a unique involution $\sigma$ on $\cO$ 
such that $\sigma(x)=y$. Also consider the automorphism $\tau\colon (x_1,x_2,x_3)\mapsto (x_1,x_3,x_2)$ of $\LL$, which takes $\xi$ to $\xi^2$. 
Then the map $\sigma\otimes \tau$ is a graded isomorphism from the opposite of $(\cO,\star,n)\otimes(\LL,\rho)$, 
endowed with the grading $\Gamma_\cO^+\otimes\Gamma_\LL$, onto $(\cO,\star,n)\otimes(\LL,\rho)$, endowed with the grading $\Gamma_\cO^-\otimes\Gamma_\LL'$. 
This shows that we get at most two similarity classes, with representatives $\Gamma_\cO^+\otimes\Gamma_\LL$ and $\Gamma_\cO^+\otimes\Gamma_\LL'$. 

Finally, there is no graded isomorphism or anti-isomorphism from $\Gamma_\cO^+\otimes\Gamma_\LL$ onto $\Gamma_\cO^+\otimes\Gamma_\LL'$. 
Indeed, any such (anti-)isomorphism $(\varphi_1,\varphi_0)$ takes the $\FF$-subalgebra $\cO\otimes 1=\bigoplus_{k\in K}V_k$ of $V=\cO\otimes\LL$ onto itself. 
Since $\cO_{k_1}=\FF x$ and $\cO_{k_2}=\FF y$, we obtain $\varphi_1(x\otimes 1)*\varphi_1(y\otimes 1)=0=\varphi_1\bigl((x\otimes 1)*(y\otimes 1)\bigr)$, 
whereas $0\ne \varphi_1\bigl((y\otimes 1)*(x\otimes 1)\bigr)$. It follows that $(\varphi_1,\varphi_0)$ 
is necessarily an isomorphism, so $\varphi_0\rho=\rho\varphi_0$ and hence $\varphi_0$ must respect the grading on $\LL$, which is a contradiction 
because it takes $\Gamma_\LL$ to $\Gamma_\LL'$. The proof of the rank $0$ case is complete.

\smallskip

If the rank is $8$, then $V_e$ is a symmetric composition algebra of dimension $8$ and hence isomorphic to $(\cC,\bullet,n)$ or $(\cO,\star,n)$. 
This isomorphism class is an invariant of the grading, and the result follows.

\smallskip

If the rank is $1$, then $V_e=\FF \varepsilon$, for a unique idempotent $\varepsilon$. As in the proof of Theorem \ref{th:gradings_cyclic}, 
we may identify $V$ with $\cC\otimes\LL$, where $\cC=\{X\in V\,|\, X*\varepsilon=\bar X\}$. This is the para-Cayley algebra with para-unit $\varepsilon$, and 
it is an invariant of the grading. Besides, $\cC$ is $G$-graded with neutral homogeneous component of dimension $1$. 
The only possibility is that the support of the grading on $\cC$ is isomorphic to $\ZZ_2^3$, and this support is also an invariant of the grading. 
Thus $\Gamma$ is isomorphic to $\GIII_{1}(G,K,h)$. The standard involution $\sigma\colon x\mapsto \bar x$ on $\cC$ preserves the grading on $\cC$, 
and the map $\sigma\otimes\tau$ ($\tau$ as above interchanging $\xi$ and $\xi^2$) gives a graded anti-isomorphism from $(\cC,\bullet,n)\otimes(\LL,\rho)$, 
endowed with the grading $\GIII_{1}(G,K,h)$, onto itself, but endowed with the grading $\GIII_{1}(G,K,h^2)$.

\smallskip

If the rank is $4$, then $V_e$ is a symmetric composition algebra of dimension $4$, so it is a para-quaternion algebra and hence contains a unique 
para-unit $\varepsilon$ (\cite[Proposition 4.43 and Theorem 4.44]{EKmon}), which is thus an invariant of the grading.
The graded para-Cayley algebra $\cC=\{X\in V \,|\,X*\varepsilon=\bar X\}$ is then also an invariant of the grading. Since any grading on the Cayley algebra 
with neutral homogeneous component of dimension $\geq 2$ is induced from the Cartan grading (\cite[Corollary 4.13]{EKmon}), the grading on $\cC$ is of the form $\Gamma_\cC(G,\gamma)$ with $\gamma=(e,g,g^{-1})$ for some $g\in G$. Moreover, the condition $g\not\in \langle h\rangle$ follows from the fact $\dim V_e=4$. The pair $\{g,g^{-1}\}$ is an invariant of the grading, and the gradings $\GIII_{4}(G,g,h)$ and $\GIII_{4}(G,g,h^2)$ are proved to be similar with the same argument as for rank $1$.

\smallskip

Finally, if the rank is $2$, then $V_e$ is the para-quadratic composition algebra $(\cK,\bullet,n)$, and this contains exactly three para-units. 
Actually, $\cK$ is isomorphic to $\FF\times \FF$  but with the para-Hurwitz product: $(x_1,x_2)\bullet (y_1,y_2)=(x_2y_2,x_1y_1)$. 
The para-units are $(1,1)$, $(\omega,\omega^2)$ and $(\omega^2,\omega)$.
If $\varepsilon$ is a para-unit of $V_e$, we may take, as in the proof of Theorem \ref{th:gradings_cyclic}, the para-Cayley algebra 
$\cC=\{ X\in V\,|\, X*\varepsilon=\bar X\}$ with para-unit $\varepsilon$, and then identify $V$ with $\cC\otimes\LL$. 
This para-Cayley subalgebra is graded with $\dim \cC_e=2$, so the grading is induced from the Cartan grading, and hence is of the form 
$\Gamma_\cC(G,\gamma)$ with $\gamma=(g_1,g_2,g_3)$ satisfying $g_1g_2g_3=e$. By \cite[Theorem 4.21]{EKmon}, two such gradings 
$\Gamma_\cC(G,\gamma)$ and $\Gamma_\cC(G,\gamma')$ are isomorphic if and only if there is a permutation $\pi\in S_3$ such that 
either $g_i'=g_{\pi(i)}$ for all $i=1,2,3$, or $g'_i=g_{\pi(i)}^{-1}$ for all $i=1,2,3$.

It follows that our grading of rank $2$ is similar to $\GIII_{2}(G,\gamma,h)$, and, to ensure that $\dim V_e=2$, we must have $g_i\in G\setminus\langle h\rangle$ 
for all $i=1,2,3$. Now consider $\cC$ as the Cayley algebra, with product denoted by juxtaposition. Let $e_1$ and $e_2$ be the two nonzero orthogonal idempotents 
of the quadratic algebra $\cC_e$, and consider the Peirce subspaces $\cU=\{x\in\cC\,|\, e_1x=x=xe_2\}$, $\cV=\{x\in\cC\,|\, e_2x=x=xe_1\}$, 
so that $\cC=\cC_e\oplus\cU\oplus\cV$, and $\cU=\bigoplus_{i=1}^3\cU_{g_i}$, $\cV=\bigoplus_{i=1}^3\cV_{g_i^{-1}}$. 
If we take, instead of $\varepsilon$, the para-unit $\varepsilon'=\omega e_1+\omega^2 e_2$ of $\cC_e$, then the corresponding para-Cayley subalgebra is
\[
\begin{split}
\cC'&=\{X\in V\,|\, X*\bigl(\varepsilon'\otimes 1\bigr)=
b_Q\bigl(\varepsilon'\otimes 1,X\bigr)\bigl(\varepsilon'\otimes 1\bigr) -X\}\\
 &=(\cC_e\otimes 1) \oplus (\cU\otimes\xi) \oplus (\cV\otimes \xi^2).
\end{split}
\]
This shows that $\GIII_{2}(G,(g_1,g_2,g_3),h)$ is isomorphic to $\GIII_{2}(G,(hg_1,hg_2,hg_3),h)$, and also to $\GIII_{2}(G,(h^2g_1h^2g_2,h^2g_3),h)$.
As in the cases of rank $1$ and $4$, we see that $\GIII_{2}(G,\gamma,h)$ and $\GIII_{2}(G,\gamma,h^2)$ are similar, and the result follows.
\end{proof}

%-------------------------------------------------
\section{Appendix: graded modules}\label{s:graded_mod}

In what follows, we assume the ground field $\FF$ to be {\em algebraically closed of characteristic $0$}. In \cite{EK14}, graded modules for the classical simple Lie algebras were studied. However, for the simple Lie algebra $\cL$ of type $D_4$, the computation of graded Brauer invariants of irreducible $\cL$-modules was restricted to the case when the $G$-grading on $\cL$ is a {\em matrix grading}, i.e., when $\wh{G}$ fixes the isomorphism class of the natural module. This covers the cases of Type I and II gradings. (As mentioned in the Introduction, Type II reduces to a matrix grading.) Our goal in this appendix is to complete the results of \cite{EK14} by considering Type III gradings on $\cL$.
We already showed in Subsection \ref{sse:lifting1} that, in this case, the graded Brauer invariants of the natural and half-spin modules are equal to the identity element of the $(G/H)$-graded Brauer group of $\FF$. But this is not sufficient to obtain the graded Brauer invariant of every irreducible module. In order to complete the calculation, we will need a few general remarks of independent interest.

%-------------------------------------------------
\subsection{Background on algebraic groups}

Let $\cL$ be a finite-dimensional semisimple Lie algebra over $\FF$. The corresponding adjoint algebraic group $\bar\cG$ is the group of inner automorphisms $\inaut(\cL)$. Denote by $\tilde\cG$ the associated simply connected group. Once we fix a Borel subgroup $\wt{B}$ in $\tilde{\cG}$ and a maximal torus $\wt{T}$ in $\wt{B}$, we obtain the root system $\Phi$ of $\cL$ and a system of simple roots $\Pi$. The group of characters $\mathfrak{X}(\wt{T})$ is the group of integral weights $\Lambda$. Denote by $\Lambda^+$ the subset of dominant weights and by $\Lambda^{\mathrm{r}}$ the root lattice: $\Lambda^{\mathrm{r}}=\ZZ \Phi$.

For any (connected) semisimple algebraic group $\cG$ with the same root system, there are isogenies $\tilde\cG\xrightarrow{\tilde\pi}\cG\xrightarrow{\bar\pi}\bar\cG$. Moreover (see e.g. \cite[Theorem 8.17]{MT}), with $T=\tilde\pi(\wt{T})$, we have the root space decomposition:
\[
\cL=\Lie(\cG)=\Lie(T)\oplus\Bigl(\bigoplus_{\alpha\in\Phi}\cL_\alpha\Bigr),
\]
and for each root $\alpha\in\Phi$ there is a closed imbedding of algebraic groups $u_\alpha\colon\Gs_a\rightarrow \cG$ such that $tu_\alpha(c)t^{-1}=u_\alpha\bigl(\alpha(t)c\bigr)$ for all $t\in T$ and $c\in\FF$. As usual, we denote by $\Gs_a$ the additive group of $\FF$ and by $\Gs_m$ the one-dimensional torus $\FF^\times=\GL_1(\FF)$. 
Once a Chevalley basis $\{h_i,x_\alpha\mid i=1,\ldots,\rank(\cL),\ \alpha\in\Phi\}$ is fixed, if $V$ is a faithful rational module for $\cG$ and we identify $\cG$ with a subgroup of $\GL(V)$, and hence $\cL$ with a subalgebra of $\frgl(V)$, then we may take $u_\alpha(c)=\exp(cx_\alpha)$ (see \cite{Steinberg}).

The group of characters $\mathfrak{X}(T)$ is a lattice with $\Lambda^{\mathrm{r}}=\mathfrak{X}(\wb{T})\leq \mathfrak{X}(T)\leq \Lambda=\mathfrak{X}(\wt{T})$, where $\wb{T}=\bar{\pi}(T)$. Given any symmetry of the Dynkin diagram $\tau\in\Aut(\mathrm{Dyn})$ such that $\tau$ preserves $\mathfrak{X}(T)$, there is a unique automorphism $\sigma_\tau\in\Aut(\cG)$ such that $\sigma_\tau\bigl(u_\alpha(c)\bigr)=u_{\tau(\alpha)}(c)$ for all $\alpha\in\Pi$ and $c\in\FF$ (see e.g. \cite[\S 23.7]{Chevalley} or \cite[p.~156]{Steinberg}). For the adjoint $\bar\cG$ or simply connected $\tilde \cG$, this allows the construction of the semidirect products $\bar\cG\rtimes \Aut(\mathrm{Dyn})$ and 
$\tilde\cG\rtimes \Aut(\mathrm{Dyn})$. The first one is isomorphic to the automorphism group $\Aut(\cL)$, where to any $\tau\in\Aut(\mathrm{Dyn})$ as above we associate the unique automorphism of $\cL$, also denoted by $\sigma_\tau$, such that $\sigma_\tau(x_\alpha)=x_{\tau(\alpha)}$ for any $\alpha\in\Pi$ (see \cite[Chapter~IX]{Jacobson}).

Given a dominant weight $\lambda\in\Lambda^+$, consider its stabilizer $S_\lambda$ in $\Aut(\mathrm{Dyn})$. For $\tau\in S_\lambda$, let $\sigma=\sigma_\tau$ be the associated automorphism of $\cL$. Then $\sigma$ extends to an automorphism of the universal enveloping algebra $U(\cL)$ that preserves the left ideal $J(\lambda)$ in \cite[\S 21.4]{Humphreys}, and hence induces an element, also denoted by $\sigma$, in $\GL(V_\lambda)$, where $V_\lambda=U(\cL)/J(\lambda)$ is the irreducible module with highest weight $\lambda$. 
By definition of $\sigma$, we have $\sigma(xv)=\sigma(x)\sigma(v)$ for all $x\in\cL$ and $v\in V_\lambda$.

The corresponding representation $\rho\colon\cL\rightarrow \frgl(V_\lambda)$ ``integrates'' to a representation $\tilde\rho\colon\tilde\cG\rightarrow \GL(V_\lambda)$ that extends to
\begin{equation}\label{eq:rhotilde}
\tilde\rho\colon\tilde\cG\rtimes S_\lambda\rightarrow \GL(V_\lambda).
\end{equation}

%-------------------------------------------------
\subsection{Graded Brauer invariants of irreducible modules}

Let $G$ be an abelian group and let $\Gamma:\cL=\bigoplus_{g\in G}\cL_g$ be a $G$-grading on $\cL$. Recall that the grading is determined by a morphism of affine group schemes, $\eta_\Gamma\colon G^D\rightarrow \AAut(\cL)$. Because of our assumptions on the ground field $\FF$, it is sufficient to consider only the $\FF$-points, i.e., a morphism of algebraic groups $\wh{G}\to\Aut(\cL)$.
Strictly speaking, $\wh{G}$ is an algebraic group only if $G$ is finitely generated, while  in general it is a pro-algebraic group. However, since we are dealing with gradings on finite-dimensional objects (algebras and modules), we may replace $G$ by a finitely generated subgroup in all arguments that deal with finitely many objects.

Following \cite{EK14}, we denote the image of a character $\chi\in\widehat{G}$ by $\alpha_\chi$, i.e., $\alpha_\chi(x)\bydef \chi(g)x$ for all $g\in G$ and $x\in\cL_g$. Thus any character $\chi\in\widehat{G}$ induces the automorphism $\alpha_\chi\in\Aut(\cL)$ and an associated diagram automorphism $\tau_\chi\in\Aut(\mathrm{Dyn})=\Aut(\cL)/\inaut(\cL)$. As in \cite{EK14}, given a dominant weight $\lambda\in\Lambda^+$, consider the \emph{inertia group}:
\[
K_\lambda\bydef \{ \chi\in\widehat{G} \mid \tau_\chi(\lambda)=\lambda\}.
\]
This is the inverse image of the subgroup $\bar{\cG}\rtimes S_\lambda\subset\bar\cG\rtimes\Aut(\mathrm{Dyn})\simeq\Aut(\cL)$ under $\eta_\Gamma$. We also write $H_\lambda\bydef K_\lambda^\perp\subset G$, where $\perp$ denotes the ``orthogonal complement'' in $\wh{G}$ of a subgroup of $G$, or vice versa.

Let $\pi\colon\tilde\cG\rtimes\Aut(\mathrm{Dyn})\rightarrow \bar\cG\rtimes\Aut(\mathrm{Dyn})$ be the natural projection, and consider preimages $\tilde\alpha_\chi$ in $\tilde\cG\rtimes\Aut(\mathrm{Dyn})$ for all $\chi\in\widehat{G}$. Recall that the kernel of $\pi$ equals the center of the simply connected group, $Z(\tilde\cG)$. Since $\widehat{G}$ is abelian, the commutator $[\alpha_{\chi_1},\alpha_{\chi_2}]=\alpha_{\chi_1}\alpha_{\chi_2}\alpha_{\chi_1}^{-1}\alpha_{\chi_2}^{-1}$ is trivial, and hence we obtain $[\tilde\alpha_{\chi_1},\tilde\alpha_{\chi_2}]\in Z(\tilde\cG)$ for all $\chi_1,\chi_2\in\widehat{G}$.
The action of the algebraic group $\Aut(\cL)=\bar\cG\rtimes\Aut(\mathrm{Dyn})$, or of $\tilde\cG\rtimes\Aut(\mathrm{Dyn})$, on $\cL=\Der(\cL)$ is the adjoint action. In particular, we have
\begin{equation}\label{eq:rhotildealpha}
\rho\bigl(\alpha_\chi(x)\bigr)=\tilde\rho(\tilde\alpha_\chi)\rho(x)\tilde\rho(\tilde\alpha_\chi)^{-1},
\end{equation}
for any $\chi\in K_\lambda$ and $x\in\cL$. On the other hand, the elements in the center $Z(\tilde\cG)$ act by scalars on $V_\lambda$, so there is an associated morphism 
\[
\Psi_\lambda\colon Z(\tilde\cG)\rightarrow \Gs_m,
\] 
defined by $\tilde\rho(z)=\Psi_\lambda(z)\id$ for all $z\in Z(\tilde\cG)$. In other words, $\Psi_\lambda$ is the restriction of $\lambda\in\mathfrak{X}(\wt{T})$ to $Z(\tilde\cG)$. 

Recall from Section \ref{s:preliminaries} that the elements of the $G$-graded Brauer group $B_G(\FF)$ can be interpreted as alternating bicharacters $\hat{\beta}\colon\wh{G}\times \wh{G}\to\Gs_m$ (which factor through the homomorphism $\wh{G}\to\wh{G_0}$ where $G_0$ is the torsion subgroup of $G$). 
There is a unique $(G/H_\lambda)$-grading on the associative algebra $\End(V_\lambda)$ such that $\rho\colon\cL\to\frgl(V_\lambda)$ is a homomorphism of $(G/H_\lambda)$-graded algebras. The \emph{Brauer invariant} $\Br(\lambda)$ (see \cite[Definition 4]{EK14}) is the class $[\End(V_\lambda)]$ in $B_{G/H_\lambda}(\FF)$. Together with the subgroup $H_\lambda$, it measures how far the irreducible module $V_\lambda$ is from admitting a $G$-grading compatible with the $G$-grading on $\cL$. To be precise, $V_\lambda$ admits such a grading if and only if $H_\lambda=1$ and $\Br(\lambda)$ is trivial. Moreover, knowing $H_\lambda$ and $\Br(\lambda)$ for all $\lambda\in\Lambda^+$ allows us to classify the simple objects in the category of finite-dimensional graded $\cL$-modules (see \cite[Theorem 8]{EK14}). 
Using the above interpretation of graded Brauer groups, $\Br(\lambda)$ is identified with the commutation factor of the $(G/H_\lambda)$-graded matrix algebra $\End(V_\lambda)$, which is the alternating bicharacter $\hat{\beta}_\lambda\colon K_\lambda\times K_\lambda\rightarrow \Gs_m$ defined as follows. For every $\chi\in K_\lambda$, there exists an invertible element $u_\chi\in\End(V_\lambda)$, unique up to a scalar multiple, such that the action of $\chi$ on $\End(V_\lambda)$ (associated to the $(G/H_\lambda)$-grading) is the inner automorphism 
%$\rho(\alpha_\chi(x))=u_\chi\rho(x)u^{-1}_\chi$ for all $x\in\cL$
$a\mapsto u_\chi a u^{-1}_\chi$, and then $\hat{\beta}_\lambda$ is defined by the equation $u_{\chi_1}u_{\chi_2}=\hat{\beta}_\lambda(\chi_1,\chi_2)u_{\chi_2}u_{\chi_1}$ for all $\chi_1,\chi_2\in K_\lambda$. 
In view of Equation \eqref{eq:rhotildealpha}, we can take $u_\chi=\tilde{\rho}(\tilde{\alpha}_\chi)$, so we obtain a new interpretation of $\hat{\beta}_\lambda$, namely,
\begin{equation}\label{eq:new}
\hat{\beta}_\lambda(\chi_1,\chi_2)=\Psi_\lambda\bigl([\tilde\alpha_{\chi_1},\tilde\alpha_{\chi_2}]\bigr).
\end{equation}
This new point of view on Brauer invariants has the following consequences:

\begin{proposition}\label{pr:Br_trivial}
Let $\cL$ be a semisimple Lie algebra, with Dynkin diagram $\mathrm{Dyn}$, weight lattice $\Lambda$ and root lattice $\Lambda^{\mathrm{r}}$, endowed with a grading by an abelian group $G$. Let $\lambda\in\Lambda$ be a dominant weight.
\begin{enumerate}
\item If $\lambda\in\Lambda^{\mathrm{r}}$, then $\Br(\lambda)$ is trivial.
\item If $\inaut(\cL)$ is simply connected (i.e., $\Lambda=\Lambda^{\mathrm{r}}$) and $\Aut(\mathrm{Dyn})$ is trivial, then any $\cL$-module admits a compatible $G$-grading.
\end{enumerate}
\end{proposition}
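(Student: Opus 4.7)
The plan is to exploit the formula \eqref{eq:new}, $\hat{\beta}_\lambda(\chi_1,\chi_2)=\Psi_\lambda\bigl([\tilde\alpha_{\chi_1},\tilde\alpha_{\chi_2}]\bigr)$, together with the description of $\Psi_\lambda$ as the restriction of $\lambda\in\mathfrak{X}(\wt{T})$ to $Z(\tilde\cG)$. Both statements will then fall out by tracking which weights kill the center.

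For part (i), I would first recall the classical fact that $Z(\tilde\cG)$ is the kernel of the composite isogeny $\bar\pi\circ\tilde\pi\colon\tilde\cG\to\bar\cG$, so that $\mathfrak{X}(\wb{T})=\Lambda^{\mathrm{r}}$ is precisely the subgroup of weights in $\Lambda=\mathfrak{X}(\wt{T})$ whose restriction to $Z(\tilde\cG)$ is trivial. Hence, if $\lambda\in\Lambda^{\mathrm{r}}$, the character $\Psi_\lambda$ vanishes on $Z(\tilde\cG)$. Since the commutators $[\tilde\alpha_{\chi_1},\tilde\alpha_{\chi_2}]$ lie in $Z(\tilde\cG)$ (as noted in the paragraph preceding \eqref{eq:new}), the formula \eqref{eq:new} yields $\hat{\beta}_\lambda\equiv 1$ on $K_\lambda\times K_\lambda$, and therefore $\Br(\lambda)$ is trivial.

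For part (ii), the hypothesis $\Lambda=\Lambda^{\mathrm{r}}$ combined with part (i) gives $\Br(\lambda)=1$ for every dominant weight $\lambda$. The triviality of $\Aut(\mathrm{Dyn})$ forces $\tau_\chi=1$ for all $\chi\in\widehat{G}$, so $\tau_\chi(\lambda)=\lambda$ holds trivially, $K_\lambda=\widehat{G}$, and consequently $H_\lambda=K_\lambda^\perp=\{e\}$. These are exactly the two conditions identified in \cite[Theorem 8]{EK14} for the irreducible module $V_\lambda$ to admit a $G$-grading compatible with the $G$-grading on $\cL$. Since $\FF$ has characteristic $0$, any finite-dimensional $\cL$-module decomposes as a direct sum of irreducibles, and taking the direct sum of compatible gradings component by component yields the desired compatible $G$-grading.

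The only subtle point is justifying that $\Lambda^{\mathrm{r}}$ is the annihilator of $Z(\tilde\cG)$ inside $\Lambda$, but this is a standard fact about simply connected semisimple groups that should be cited rather than reproved. Everything else is bookkeeping with \eqref{eq:new} and the definitions of $K_\lambda$, $H_\lambda$.
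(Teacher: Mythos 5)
Your proof is correct and follows essentially the same route as the paper: both rest on the formula \eqref{eq:new} and the identification of $\Psi_\lambda$ with the restriction of $\lambda$ to $Z(\tilde\cG)$. The only cosmetic difference is in part (ii), where the paper observes directly that $Z(\tilde\cG)=1$ (so the commutators are trivial), whereas you deduce $\Br(\lambda)=1$ from part (i) and $\Lambda=\Lambda^{\mathrm{r}}$; both are equivalent, and your added remark about passing from irreducibles to arbitrary modules via direct sums is a small but welcome bit of explicitness the paper leaves implicit.
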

\begin{proof}
We use the notation introduced in the previous subsection: $\cG$ is a connected algebraic group with $\Lie(\cG)=\cL$, etc. Recall that the isomorphism classes of irreducible representations of $\cG$ correspond bijectively to the weights in $\mathfrak{X}(T)\cap\Lambda^+$. In particular,
if $\lambda\in\Lambda^{\mathrm{r}}$, then the representation $\rho\colon\cL\rightarrow\frgl(V_\lambda)$ ``integrates'' to a representation $\bar\rho\colon\bar\cG\rtimes S_\lambda\rightarrow \GL(V_\lambda)$, so that $\tilde\rho=\bar\rho\circ \pi$, with $\tilde\rho$ as in \eqref{eq:rhotilde}. 
Hence, $\tilde\rho\bigl(Z(\tilde\cG)\bigr)$ is trivial, so for any $\chi_1,\chi_2\in K_\lambda$, we have $\tilde\rho\bigl([\tilde\alpha_{\chi_1},\tilde\alpha_{\chi_2}]\bigr)=\id$ and thus $\hat\beta_\lambda(\chi_1,\chi_2)=\Psi_\lambda\bigl([\tilde\alpha_{\chi_1},\tilde\alpha_{\chi_2}]\bigr)=1$.

For the second part, if $\Aut(\mathrm{Dyn})$ is trivial, then $K_\lambda=\widehat{G}$ and $H_\lambda=1$ for any $\lambda\in\Lambda^+$. Moreover, $Z(\tilde\cG)=1$ here, and hence $\Br(\lambda)=1$.
\end{proof}

\begin{corollary}
Let $G$ be an abelian group and let $\cL$ be a simple Lie algebra of type $G_2$, $F_4$ or $E_8$, endowed with a $G$-grading. 
Then any finite-dimensional module for $\cL$ admits a compatible $G$-grading.
\end{corollary}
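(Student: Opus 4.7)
The plan is to observe that the corollary is a direct application of part (2) of Proposition \ref{pr:Br_trivial}. The proposition reduces the question to a purely combinatorial check on the root system: we need $\Lambda = \Lambda^{\mathrm{r}}$ and $\Aut(\mathrm{Dyn}) = 1$.

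First I would recall the classical facts about the exceptional root systems of types $G_2$, $F_4$, and $E_8$: in each case, the fundamental group $\Lambda/\Lambda^{\mathrm{r}}$ is trivial (equivalently, the simply connected group coincides with the adjoint group, and $Z(\tilde\cG) = 1$), and the Dynkin diagram admits no nontrivial automorphisms. Both assertions can be read off from the standard tables (see e.g. Bourbaki, Groupes et alg\`ebres de Lie, Ch. VI, Planches VIII, IX).

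Given these facts, part (2) of Proposition \ref{pr:Br_trivial} applies verbatim: for any dominant weight $\lambda \in \Lambda^+$, the inertia subgroup $K_\lambda$ equals all of $\wh{G}$ (so $H_\lambda = 1$), and the Brauer invariant $\Br(\lambda)$ is trivial since $Z(\tilde\cG) = 1$ forces $\Psi_\lambda$ to be trivial on $Z(\tilde\cG)$ and hence $\hat\beta_\lambda = 1$ by formula \eqref{eq:new}. By the characterization recalled just before Proposition \ref{pr:Br_trivial}, the irreducible module $V_\lambda$ then admits a $G$-grading compatible with the grading on $\cL$. Since every finite-dimensional $\cL$-module decomposes into a direct sum of irreducibles, the conclusion follows.

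There is no real obstacle here: the only thing to verify is the two structural properties of the root systems, which are standard. Strictly speaking, one should note that we are working over an algebraically closed field of characteristic $0$, as assumed at the beginning of the appendix, so the identification $\Aut(\cL) \simeq \bar\cG \rtimes \Aut(\mathrm{Dyn})$ and the use of rational representations of $\tilde\cG$ are justified.
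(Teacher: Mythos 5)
Your proof is correct and matches the paper's approach exactly: the corollary is a direct application of Proposition \ref{pr:Br_trivial}(2), once one recalls the standard facts that for types $G_2$, $F_4$, and $E_8$ the weight lattice equals the root lattice and the Dynkin diagram has trivial automorphism group. The paper gives no explicit proof precisely because the argument is this immediate.
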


In \cite{EK14}, this was remarked only for type $G_2$ (with a different argument).

%-------------------------------------------------
\subsection{Brauer invariants for a Type III grading on the simple Lie algebra of type $D_4$}

Theorems 46 and 48 in \cite{EK14}, which compute Brauer invariants for Type I and II gradings on simple Lie algebras of series $D$, can now be completed with the next result, where $\omega_1,\omega_2,\omega_3,\omega_4$ denote the fundamental dominant weights of the simple Lie algebra of type $D_4$, with $\omega_1$, $\omega_3$ and $\omega_4$ corresponding to the natural and half-spin representations (the outer nodes of the Dynkin diagram), and $\omega_2$ to the adjoint representation (the central node of the diagram).

\begin{theorem}
Let $\cL$ be the simple Lie algebra of type $D_4$ over an algebraically closed field $\FF$ of characteristic $0$. Suppose $\cL$ is graded by an abelian group $G$ and that the grading is of Type III. Let $K=\langle h\rangle^\perp$, where $h\in G$ is the distinguished element (see Section \ref{s:lifting}). Then, for a dominant integral weight $\lambda=\sum_{i=1}^4 m_i\omega_i$, we have the following possibilities:
\begin{enumerate}
\item If $m_1=m_3=m_4$, then $H_\lambda=1$, $K_\lambda=\widehat{G}$ and $\Br(\lambda)=1$.
\item Otherwise $H_\lambda=\langle h\rangle$, $K_\lambda=K$ and $\Br(\lambda)=1$. 
%(in the $G/\langle h\rangle$-graded Brauer group).
\end{enumerate}
\end{theorem}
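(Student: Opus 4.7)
The argument proceeds in two stages: first determine $K_\lambda$ and $H_\lambda$ from the action of $\wh{G}$ on the Dynkin diagram, and then reduce the computation of $\Br(\lambda)$ to the Brauer invariants of the four fundamental representations, each of which is either manifestly trivial or has already been shown to be trivial in Subsection~\ref{sse:lifting1}.

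Because $\Gamma$ is of Type III, the homomorphism $\wh{G}\to \Aut(\mathrm{Dyn})=S_3$, $\chi\mapsto\tau_\chi$, has image the cyclic subgroup $A_3$ and kernel $K$. Since $S_3$ fixes the central node $\omega_2$ and permutes the three outer nodes $\omega_1,\omega_3,\omega_4$, the stabilizer $S_\lambda\subseteq S_3$ of $\lambda=\sum_i m_i\omega_i$ is all of $S_3$ when $m_1=m_3=m_4$, and is otherwise a subgroup of order at most $2$, which therefore meets $A_3$ only in the identity. Thus $K_\lambda=(\pi\eta_\Gamma)^{-1}(S_\lambda\cap A_3)$ equals $\wh{G}$ in case (1) and equals $K$ in case (2), and taking orthogonal complements in $G$ yields $H_\lambda=1$ or $\langle h\rangle$, respectively.

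In case (1), a direct computation using the standard description of $D_4$ gives $\omega_2=e_1+e_2\in\Lambda^{\mathrm{r}}$ and $\omega_1+\omega_3+\omega_4=2e_1+e_2+e_3\in\Lambda^{\mathrm{r}}$, so when $m_1=m_3=m_4=m$ the weight $\lambda=m_2\omega_2+m(\omega_1+\omega_3+\omega_4)$ lies in the root lattice, and Proposition~\ref{pr:Br_trivial}(1) gives $\Br(\lambda)=1$ at once. In case (2), $\Br(\lambda)$ lives in $B_{\wb{G}}(\FF)$ with $\wb{G}=G/\langle h\rangle$ and coincides with the Brauer invariant of $V_\lambda$ for the induced Type~I $\wb{G}$-grading on $\cL$. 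Evaluating the bicharacter $\hat\beta_\lambda$ on $K\times K$ via Equation~\eqref{eq:new}---where all lifts $\tilde\alpha_\chi$ may be chosen inside $\tilde{\cG}$, so that the commutators automatically land in $Z(\tilde{\cG})$---and using the additivity $\Psi_{\lambda+\mu}=\Psi_\lambda\Psi_\mu$ of the restriction map $\lambda\mapsto\Psi_\lambda=\lambda|_{Z(\tilde{\cG})}$, one obtains the factorization $\hat\beta_\lambda=\prod_{i=1}^{4}\hat\beta_{\omega_i}^{m_i}$ on $K\times K$. The factor for $\omega_2$ is trivial because $V_{\omega_2}=\cL$ carries the grading $\Gamma$ itself as a compatible $G$-grading, while the factors for $\omega_1,\omega_3,\omega_4$ are, up to a reindexing consistent with the triality action, the $\wb{G}$-graded Brauer classes $[E_1],[E_2],[E_3]$ of the three central simple factors of $E$ under the induced $\wb{G}$-grading---these are shown to be trivial in Subsection~\ref{sse:lifting1}.

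The main technical point is the bookkeeping in case (2): one must verify that the lifts $\tilde\alpha_\chi$ for $\chi\in K$ can be chosen simultaneously in $\tilde{\cG}$ so that the product formula for $\hat\beta_\lambda$ is well-defined on $K\times K$, and correctly match the three non-trivial exterior fundamental modules $V_{\omega_1},V_{\omega_3},V_{\omega_4}$ with the three factors $E_1,E_2,E_3$ whose $\wb{G}$-graded Brauer classes are computed in Subsection~\ref{sse:lifting1}.
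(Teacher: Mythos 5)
Your proof is correct and follows essentially the same route as the paper's. The determination of $K_\lambda$ and $H_\lambda$ from $S_\lambda\cap A_3$ is exactly as in the text; case (1) uses $\omega_2,\ \omega_1+\omega_3+\omega_4\in\Lambda^{\mathrm{r}}$ and Proposition~\ref{pr:Br_trivial} just as the paper does; and case (2) reduces to $\Br(\omega_i)=1$ for all $i$ combined with multiplicativity of $\lambda\mapsto\Br(\lambda)$, which the paper obtains by citing \cite[Proposition 10]{EK14} and you instead re-derive on the spot from Equation~\eqref{eq:new} (a step the paper itself points out is equivalent). The only cosmetic difference is that the paper does not bother to spell out the factorization $\hat\beta_\lambda=\prod_i\hat\beta_{\omega_i}^{m_i}$ or the identification of $\Br(\omega_1),\Br(\omega_3),\Br(\omega_4)$ with $[E_1],[E_2],[E_3]$, since both are already in place from \cite{EK14} and Subsection~\ref{sse:lifting1}.
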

\begin{proof}
If $m_1=m_3=m_4$, then the diagram automorphisms of order $3$ preserve $\lambda$, so $K_\lambda=\widehat{G}$ and $H_\lambda=1$. Moreover, both $\omega_2$ (the highest root) and $\omega_1+\omega_3+\omega_4$ are in the root lattice $\Lambda^{\mathrm{r}}$, so $\Br(\lambda)=1$ by Proposition \ref{pr:Br_trivial}.

Otherwise we get $K_\lambda=K$, $H_\lambda=\langle h\rangle$, and the associated grading by $\wb{G}\bydef G/\langle h\rangle$ is of Type I. As shown in Subsection \ref{sse:lifting1}, we have $\Br(\omega_1)=\Br(\omega_3)=\Br(\omega_4)=1$ in the $\wb{G}$-graded Brauer group. But, as we just observed, $\omega_2\in \Lambda^{\mathrm{r}}$, so $\Br(\omega_2)=1$, too. The result follows now from \cite[Proposition 10]{EK14}.
\end{proof}

\begin{corollary}
The simple $\cL$-module $V_\lambda$ admits a $G$-grading making it a graded $\cL$-module if and only if $m_1=m_3=m_4$.
\end{corollary}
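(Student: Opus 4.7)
The approach is to apply directly the criterion recalled just before the preceding theorem (originating in \cite[Theorem 8]{EK14}): an irreducible $\cL$-module $V_\lambda$ admits a $G$-grading making it a graded $\cL$-module if and only if both the inertia subgroup $H_\lambda$ is trivial \emph{and} the Brauer invariant $\Br(\lambda)\in B_{G/H_\lambda}(\FF)$ is trivial. With this criterion in hand, the corollary is an immediate reading of the two cases of the theorem.

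In case~(1), namely $m_1=m_3=m_4$, the theorem yields $H_\lambda=1$ and $\Br(\lambda)=1$, so both halves of the criterion hold and $V_\lambda$ carries a compatible $G$-grading. For the converse, suppose $(m_1,m_3,m_4)$ is not a constant triple; by case~(2) of the theorem we have $H_\lambda=\langle h\rangle$, and since $h$ is by construction the distinguished element of a Type~III grading (see the opening of Section~\ref{s:lifting}) it has order exactly $3$. Thus $H_\lambda$ is nontrivial, the first half of the criterion already fails, and no compatible $G$-grading on $V_\lambda$ can exist, regardless of the value of $\Br(\lambda)$.

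The one point worth flagging is that the equivalence is genuinely an ``iff'': the forward direction requires both triviality statements of case~(1), whereas the reverse direction is driven by the nontriviality of $H_\lambda$ alone in case~(2); the Brauer invariant happens to vanish there as well, but it is not what obstructs the existence of the grading. No real obstacle is expected, since all the substantive work (in particular, the vanishing of $\Br(\lambda)$ in case~(2), which relies on the vanishing of the graded Brauer invariants of the natural and half-spin modules established in Subsection~\ref{sse:lifting1} together with $\omega_2\in\Lambda^{\mathrm r}$) is already carried out in the theorem.
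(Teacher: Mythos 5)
Your argument is correct and is exactly the implicit reasoning behind the paper's corollary, which is stated without proof as an immediate consequence of the theorem and the criterion from \cite[Theorem 8]{EK14} recalled earlier in the section. Your observation that the obstruction in case~(2) is the nontriviality of $H_\lambda$ alone (the Brauer invariant being trivial there) is an accurate and worthwhile clarification.
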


%-------------------------------------------------
\subsection{Analogy with Tits algebras}

Tits algebras (see e.g. \cite{Tits} or \cite[\S 27]{KMRT}) were introduced to study representations of semisimple algebraic groups over an arbitrary field. 
In the present work, the ground field is assumed algebraically closed and of characteristic $0$, but we are interested in graded representations.

As before, let $G$ be a finitely generated abelian group and let $\cL$ be a semisimple Lie algebra endowed with a $G$-grading. Consider the associated morphism $\eta\colon\wh{G}\rightarrow \Aut(\cL)=\bar\cG\rtimes\Aut(\mathrm{Dyn})$, where $\bar \cG=\inaut(\cL)$. Any $\chi\in\wh{G}$ induces an automorphism $\alpha_\chi\in\Aut(\cL)$ and hence a diagram automorphism $\tau_\chi\in\Aut(\mathrm{Dyn})=\Aut(\cL)/\inaut(\cL)$. Thus $\wh{G}$ acts on $\Lambda$, $\Lambda^+$ and $\Lambda^{\mathrm{r}}$.

Let $H$ be the finite subgroup of $G$ such that $H^\perp$ is the inverse image of $\inaut(\cL)$ under $\eta$. Fix any subgroup $M$ of $G$ contained in $H$ and denote   $K=M^\perp\subset\widehat{G}$ and $\wb{G}=G/M$. For any $\lambda\in\Lambda^+$ with $K\subset K_\lambda$, or, equivalently, $H_\lambda\subset M$, or $\lambda\in(\Lambda^+)^K$ (the set of dominant weights fixed under the action of $K$), we may consider the Brauer invariant of $\lambda$ in the $\wb{G}$-graded Brauer group $B_{\wb{G}}(\FF)$. Denote it by $\Br_{\wb{G}}(\lambda)$. In this way we get a map
\[
\begin{split}
(\Lambda^+)^K&\longrightarrow B_{\wb{G}}(\FF),\\
\lambda\quad &\mapsto\quad \Br_{\wb{G}}(\lambda),
\end{split}
\]
which is multiplicative by \cite[Proposition 10]{EK14} or directly from Equation \eqref{eq:new}.
Moreover, if $\lambda\in(\Lambda^{\mathrm{r}})^K$, then $\Br_{\wb{G}}(\lambda)=1$ by Proposition \ref{pr:Br_trivial}. 

As in \cite[Corollary 3.5]{Tits} or \cite[Theorem 27.7]{KMRT}, this map ``extends'' to a group homomorphism
\[
\beta_{\wb{G}}\colon(\Lambda/\Lambda^{\mathrm{r}})^K\rightarrow B_{\wb{G}}(\FF),
\]
where for any $\lambda+\Lambda^{\mathrm{r}}\in(\Lambda/\Lambda^{\mathrm{r}})^K$ we consider the unique minimal weight $\hat\lambda\in \Lambda^+$ in the same class modulo $\Lambda^{\mathrm{r}}$ and define $\beta_{\wb{G}}(\lambda+\Lambda^{\mathrm{r}})\bydef\Br_{\wb{G}}(\hat\lambda)$.

\begin{remark} 
The Brauer invariant $\Br(\lambda)$ of $\lambda\in \Lambda^+$ is precisely $\beta_{G/H_\lambda}(\lambda+\Lambda^{\mathrm{r}})$.
\end{remark}

In the setting of \cite{Tits}, a simply connected semisimple group over an arbitrary field $\FF$ is considered, and the group that acts on $\Lambda$, $\Lambda^+$ and $\Lambda^{\mathrm{r}}$ is the absolute Galois group $\Gamma$ of $\FF$, obtaining a group homomorphism $\beta\colon(\Lambda/\Lambda^{\mathrm{r}})^\Gamma\rightarrow B(\FF)$ (the classical Brauer group). Also, if $\lambda+\Lambda^{\mathrm{r}}\in \Lambda/\Lambda^{\mathrm{r}}$ is not $\Gamma$-invariant, then one has to consider the subgroup $\Gamma_\lambda\bydef\{\gamma\in\Gamma\;|\;\gamma(\lambda+\Lambda^{\mathrm{r}})=\lambda+\Lambda^{\mathrm{r}}\}$, the field $\FF_\lambda\bydef(\FF_{\mathrm{sep}})^{\Gamma_\lambda}$ and the homomorphism $(\Lambda/\Lambda^{\mathrm{r}})^{\Gamma_\lambda}\rightarrow B(\FF_\lambda)$. This is analogous to what we did restricting from $\wh{G}$ to $K_\lambda$.

%-------------------------------------------------------------------------------------------------------------------------------------
\section*{Acknowledgments}

The second author would like to thank the Instituto Universitario de Matem\'aticas y Aplicaciones and Departamento de Matem\'aticas of the University of Zaragoza for support and hospitality during his visit in June 2014. Also, the authors are grateful to Kirill Zainoulline of the University of Ottawa for pointing out the analogy with Tits algebras.

%-------------------------------------------

\end{document}